\newcounter{cislo} \numberwithin{cislo}{section}
\numberwithin{equation}{section}
\newtheorem*{theorem*}{Theorem}
\newtheorem{theorem}[cislo]{Theorem}
\newtheorem{lemma}[cislo]{Lemma}
\newtheorem{proposition}[cislo]{Proposition}
\newtheorem{corollary}[cislo]{Corollary}
\theoremstyle{definition}
\newtheorem{definition}[cislo]{Definition}
\newtheorem{notation}[cislo]{Notation}
\newtheorem{setup}[cislo]{Setup}
\newtheorem{discussion}[cislo]{Discussion}
\newtheorem{properties}[cislo]{Properties}
\newtheorem{construction}[cislo]{Construction}
\newtheorem{remark}[cislo]{Remark}
\newtheorem{example}[cislo]{Example}
\theoremstyle{remark}
\DeclareMathOperator{\rep}{rep}
\DeclareMathOperator{\G}{G}
\DeclareMathOperator{\rec}{rec}
\DeclareMathOperator{\ab}{ab}
\DeclareMathOperator{\cd}{cd}
\DeclareMathOperator{\lvl}{lvl}
\DeclareMathOperator{\sg}{sg}
\DeclareMathOperator{\subcat}{sub}
\DeclareMathOperator{\LL}{LL}
\newcommand{\fd}{\mathrm{fd}}
\DeclareMathOperator{\W}{\mathcal{W}}
\DeclareMathOperator{\K}{K}
\DeclareMathOperator{\D}{D}
\DeclareMathOperator{\eL}{L}
\DeclareMathOperator{\RG}{R\Gamma}
\DeclareMathOperator{\KG}{K\Gamma}
\DeclareMathOperator{\GG}{G\Gamma}
\DeclareMathOperator{\FinSet}{FinSet}
\DeclareMathOperator{\sh}{sh}
\DeclareMathOperator{\add}{add}
\newcommand{\A}{A}
\newcommand{\B}{B}
\DeclareMathOperator{\Sh}{Sh}
\DeclareMathOperator{\Hom}{Hom}
\DeclareMathOperator{\Aut}{Aut}
\DeclareMathOperator{\Rep}{Rep}
\DeclareMathOperator{\Ind}{Ind}
\DeclareMathOperator{\cInd}{cInd}
\DeclareMathOperator{\Res}{Res}
\DeclareMathOperator{\Irr}{Irr}
\DeclareMathOperator{\Grp}{Grp}
\newcommand{\integral}{\mathrm{int}}
\newcommand{\gen}{\mathrm{gen}}
\DeclareMathOperator{\Mo}{Mod}
\DeclareMathOperator{\mo}{mod}
\let\lim\relax
\DeclareMathOperator*{\lim}{lim}
\DeclareMathOperator{\id}{id}
\DeclareMathOperator{\ft}{ft}
\DeclareMathOperator{\Gal}{Gal}
\DeclareMathOperator{\eF}{\mathscr{F}}
\DeclareMathOperator{\F}{\mathbb{F}}
\DeclareMathOperator{\Z}{\mathbb{Z}}
\DeclareMathOperator{\Q}{\mathbb{Q}}
\DeclareMathOperator{\N}{\mathbb{N}}
\let\O\relax
\DeclareMathOperator{\O}{\mathcal{O}}
\DeclareMathOperator{\pt}{pt}
\DeclareMathOperator{\Ql}{\overline{\mathbb{Q}}_\ell}
\DeclareMathOperator{\Zl}{\overline{\mathbb{Z}}_\ell}
\DeclareMathOperator{\Fl}{\overline{\mathbb{F}}_\ell}
\begin{document}

\title{On modulo $\ell$ cohomology  \\ of $p$-adic Deligne--Lusztig varieties for $GL_n$}
\author{Jakub Löwit}
\date{}

\maketitle
\begin{abstract}
In 1976, Deligne and Lusztig realized the representation theory of finite groups of Lie type inside étale cohomology of certain algebraic varieties. Recently, a $p$-adic version of this theory started to emerge: there are $p$-adic Deligne--Lusztig spaces, whose cohomology encodes representation theoretic information for $p$-adic groups -- for instance, it partially realizes the local Langlands correspondence with characteristic zero coefficients. 
However, the parallel case of coefficients of positive characteristic $\ell \neq p$ has not been inspected so far. 
The purpose of this article is to initiate such an inspection. In particular, we relate cohomology of certain $p$-adic Deligne--Lusztig spaces to Vignéras's modular local Langlands correspondence for $\mathbf{GL}_n$.
\end{abstract}

\section{Introduction}
\label{section: introduction}

\subsection{Context and history}

The realization of representations of finite groups of Lie type -- groups of points of reductive groups $\mathbf{G}$ over $\F_q$ -- in the cohomology of the so-called Deligne--Lusztig varieties dates back to \cite{DL76}. At that time, the desire was to describe representations of the finite group $G = \mathbf{G}(\F_{q})$ over an algebraically closed field of characteristic zero\footnote{Such coefficient fields are indistinguishable from the point of view of first order logic, so the representation theory is independent of the specific choice.}.
After certain choices, one explicitly produces a variety $\dot{X}$ with an action of the finite group $G \times T  = \mathbf{G}(\F_q) \times \mathbf{T}(\F_q)$ where $\mathbf{T}$ is an $\F_q$-rational torus of $\mathbf{G}$. The $\ell$-adic cohomology $H^\bullet_c(\dot{X}, \overline{\Q}_\ell)$ for $\ell \neq p$ inherits this action -- one can regard its Euler characteristic $[H^\bullet_c(\dot{X}, \overline{\Q}_\ell)]$ as an element of the Grothendieck group $\G_0(G\times T, \overline{\Q}_\ell)$ of finite-dimensional representations. This decomposes into isotypic components labelled by characters of $T$, and the procedure of picking such a weight space relates characters of $T$ with representations of $G$. The categories $\rep(G, \overline{\Q}_\ell)$ and $\rep(T, \overline{\Q}_\ell)$ of finite-dimensional representations are semisimple, and the results of classical Deligne--Lusztig theory make the above relation meaningful and computable.

There are two interesting directions in which one can try to generalize this theory. Firstly, we can consider reductive groups $\mathbf{G}$ over a non-archimedean local field $K$ instead of $\F_q$, and seek a description of smooth representations of the locally profinite group $\mathbf{G}(K)$ with coefficients in an algebraically closed field of characteristic zero. The existence of a parallel theory was conjectured already by Lusztig, and its development is still in progress. The consequent relation between smooth characters of $\mathbf{T}(K)$ and smooth representations of $\mathbf{G}(K)$ should be tightly connected to the local Langlands correspondence. See \cite{Lus79, Lus04, Boy12, BW16, Iva18, Iva19, Iva20a, Iva20b}.

Secondly, one can consider different coefficient rings $\Lambda$, for instance an algebraically closed field of positive characteristic $\ell\neq p$. This becomes interesting already in the classical case -- when $\ell \mid |G|$, the category $\rep(G, \overline{\F}_\ell)$ ceases to be semisimple. Suddenly, the Grothendieck group does not contain enough information to fully describe the representation theory of $G$. A natural way to circumvent these issues is to work in the finer setting of the bounded derived category $\D^b(G, \Lambda)$, as in \cite{BR02, BDR16, Dud17}. Nevertheless, the semisimplified information is interesting.

In this paper, we are interested in the combination of the two directions above: we consider the coefficients $\Lambda=\overline{\F}_\ell$ of positive characteristic $\ell \neq p$ for the $p$-adic Deligne--Lusztig theory. In doing so, we necessarily end up phrasing certain arguments on the level of derived categories. 

In particular, we carry forward the results of \cite{Iva19} concerning the partial realization of the local Langlands correspondence for $\mathbf{GL}_n$ to the modular setting with coefficients $\Lambda = \overline{\F}_\ell$ on the level of Grothendieck groups. This has two main ingredients -- the results of \cite{Iva19} and the comparison to modular local Langlands correspondence constructed by Vignéras \cite{Vig96, Vig00a, Vig00b}.

\subsection{Our approach and results}

Let $K$ be a non-archimedean local field with residue field $\F_q$ of characteristic $p$. In other words, $K$ is either a finite extension of $\Q_p$ with residue field $\F_q$, or the function field $\F_q(\!(\varpi)\!)$.
Let $\ell \neq p$ be another prime. Consider the group $\mathbf{G} = \mathbf{GL}_n$ over $K$ with $G = \mathbf{G}(K)$ and its maximal torus $\mathbf{T}$, whose $K$-points $T = \mathbf{T}(K)$ correspond to the multiplicative group $L^\times$ of the degree $n$ unramified extension $L/K$. 

To sketch our approach and results, consider the following diagram.
\begin{figure}[H]
\centering
\begin{tikzcd}
\pm [H^\bullet_c(\dot{X}, \overline{\Q}_\ell)_{\theta}] & \mathscr{A}^0_n(K, \overline{\Q}_\ell) \arrow[dotted]{r}{r_\ell} & \mathscr{A}^0_n(K, \overline{\F}_\ell) & \pm [H^\bullet_c(\dot{X}, \overline{\F}_\ell)_{\eL \psi}] \\
\theta \arrow[mapsto]{u} \arrow[mapsto]{d} & \mathscr{X}(L, \overline{\Q}_\ell)^{\sg} \arrow{d} \arrow{u} \arrow[dotted]{r}{r_\ell}  & \mathscr{X}(L, \overline{\F}_\ell)^{\sg} \arrow{d} \arrow{u} & \psi \arrow[mapsto]{u} \arrow[mapsto]{d} \\
\cInd_{\W_L}^{\W_K}(\mu \cdot \theta) & \mathscr{G}^0_n(K, \overline{\Q}_\ell) \arrow[dotted]{r}{r_\ell} & \mathscr{G}^0_n(K, \overline{\F}_\ell) & \cInd_{\W_L}^{\W_K} (\mu \cdot \psi)
\end{tikzcd}
\caption{The reduction diagram.}\label{diagram}
\end{figure}
The left-hand side of Diagram \ref{diagram} takes values in the characteristic zero field $\overline{\Q}_\ell$, while the parallel right-hand side lives over the field $\overline{\F}_\ell$ of positive characteristic. The middle row parametrizes smooth characters of the multiplicative group $L^\times$ in strongly general position\footnote{The notion of {\it strongly general position} is a strengthening of the notion of {\it general position}; see Definition \ref{stronglygeneralposition}.} in each of the characteristics. From here, we can go to the bottom row, which parametrizes $n$-dimensional smooth irreducible representations of the Weil group $\W_K$. This goes by adjusting the given character by the so-called rectifier $\mu$, inflating and inducing -- it is explicit and well understood.

The passage to the irreducible supercuspidal representations of $G$ parametrized by the upper row is the interesting part. Here we consider a suitable Deligne--Lusztig space $\dot{X}$ equipped with commuting actions of $G$ and $T$, take its cohomology, look at the isotypic component\footnote{In characteristic zero, this is given by picking the weight space of $\theta$. In positive characteristic, one needs to do this on the derived level -- the naive definition of isotypic part yields and extra multiplicity for small $\ell$.}
of the character of $T$ we started with, and consider its Euler characteristic in the corresponding Grothendieck group. For characteristic zero coefficients, this is done in \cite{Iva18, Iva19, Iva20a}, leading to the following.

\begin{theorem}[{\cite[Theorem A]{Iva19}}]\label{thmiva}
Assume $p > n$. Let $\theta \in \mathscr{X}(L, \overline{\Q}_\ell)^{\sg}$ be a character in strongly general position. Then $\pm [H^{\bullet}_c(\dot{X}, \overline{\Q}_\ell)_\theta]$ is up to sign an irreducible supercuspidal representation of $G$ and the left-hand side of Diagram \ref{diagram} partially realizes the local Langlands correspondence.
\end{theorem}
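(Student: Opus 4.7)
The plan is to decompose the statement into three sub-claims and attack them separately: (i) prove that $H^\bullet_c(\dot{X}, \overline{\Q}_\ell)_\theta$ concentrates in a single cohomological degree, (ii) identify the resulting $G$-representation as a compactly induced, hence supercuspidal, irreducible admissible representation, and (iii) match it with the Weil group parameter $\cInd_{\W_L}^{\W_K}(\mu\cdot\theta)$ under the local Langlands correspondence for $\mathbf{GL}_n$. Since $\dot{X}$ is not of finite type, the classical Lefschetz/character-formula argument of Deligne--Lusztig does not apply verbatim, so the first step is to approximate $\dot{X}$ by a filtration $\dot{X}_m$ by finite-type subschemes (e.g.\ coming from Moy--Prasad truncation or from the Iwahori stratification), preserved by a compact open subgroup of $G\times T$.

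For (i), I would argue that on each finite-type piece $\dot{X}_m$ the $\theta$-isotypic component of cohomology is concentrated in the expected top degree $n-1$. The strong general position hypothesis is exactly what is needed to force this: it ensures that the restriction of $\theta$ to any proper twisted Levi does not factor through a norm, so that the only contribution to $H^\bullet_c(\dot{X}_m)_\theta$ comes from a stratum isomorphic to a Coxeter-type Deligne--Lusztig variety for a finite reductive quotient, where concentration is known classically. Passing to the colimit and controlling the resulting spectral sequence gives concentration for $\dot{X}$ itself. Given concentration, step (ii) proceeds by exhibiting $\rho_\theta := H^{n-1}_c(\dot{X}, \overline{\Q}_\ell)_\theta$ as compactly induced from an open, compact-mod-centre subgroup $J\subset G$ containing the parahoric stabiliser of the base point: restricting the $T$-action singles out a $J$-stable subspace whose cohomology provides a cuspidal type $\lambda_\theta$, and a Mackey/intertwining calculation (again trivialised by the strong generic hypothesis) shows $\cInd_J^G \lambda_\theta$ is irreducible. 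Compact induction from a compact-mod-centre subgroup makes $\rho_\theta$ automatically supercuspidal.

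For (iii), the key is to match $\rho_\theta$ with the supercuspidal attached by the Bushnell--Henniart/Howe construction to the admissible pair $(L/K,\mu\cdot\theta)$; the explicit parametrisation of tame supercuspidals of $\mathbf{GL}_n(K)$ via admissible pairs, combined with the known fact that LLC on that side sends such a pair to $\cInd_{\W_L}^{\W_K}(\mu\cdot\theta)$, closes the diagram. The match reduces to comparing central characters and the restriction of $\lambda_\theta$ to a maximal compact-mod-centre subgroup with the Howe datum, which is a direct computation on the cohomology of the bottom stratum. The main obstacle is without question step (i): honest cohomological concentration for an infinite-dimensional $p$-adic Deligne--Lusztig space is subtle, and controlling the spectral sequence for the filtration $\{\dot{X}_m\}$ requires taming wild contributions from deep Moy--Prasad strata. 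This is precisely where the assumption $p>n$ is used: it guarantees that the pro-$p$ radicals act trivially on $\ell$-adic cohomology in the range relevant to our strongly generic $\theta$, reducing everything to the tame/finite reductive quotient where the classical Deligne--Lusztig machinery applies.
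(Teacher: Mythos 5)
The statement you are proving is not actually proved in the present paper: it is recorded as a quotation of \cite[Theorem~A]{Iva19}, and the only additional content the paper supplies is the determination of the sign $(-1)^{\cd(\theta)}$ in Theorem~\ref{theorem: ivanov realization of local langlands} (by reducing to a closed subscheme $X_{h,n'}\subseteq\dot X_h$ whose $\theta$-isotypic cohomology is concentrated and then invoking \cite[Corollary~4.2]{Iva19} for the equality of Euler characteristics in $\G_0(G_h,\Ql)$). So you cannot meaningfully compare your proposal with the paper's proof; you can only be compared with what the paper records about the argument of \cite{Iva19}, and there your sketch drifts in several places.

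First, the degree is wrong: you write $\rho_\theta:=H^{n-1}_c(\dot X,\Ql)_\theta$, but the relevant single degree is $\cd(\theta)$, an integer depending on $\theta|_{T^1}$ through the Howe decomposition, not the fixed number $n-1$ (which is the dimension of the \emph{classical} Coxeter Deligne--Lusztig variety $\dot X_1$, not of the truncations $\dot X_h$). Second, your filtration picture is off. You propose exhausting $\dot X$ by finite-type \emph{subschemes} $\dot X_m$ via Moy--Prasad truncation or the Iwahori stratification, but the structure used here is $\dot X\cong\coprod_{G/G_\O}\dot X_\O$ with $\dot X_\O\cong\varprojlim_h\dot X_h$; the $\dot X_h$ are finite-level \emph{quotients}, not subschemes, and the Euler characteristic of the whole space is \emph{defined} (Construction~\ref{construction: euler characteristic of the whole space}) by $\cInd^G_{ZG_\O}$ from a single sufficiently deep finite level $h\geq\lvl(\theta)$, using the stability Lemma~\ref{lemma: representation stability of finite deligne--lusztig spaces} up to a shift. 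There is no spectral sequence over a filtration to tame. Third, the concentration claim is stronger than what is actually used: what is recorded is that (a) a certain closed subscheme $X_{h,n'}$ has $\theta$-isotypic cohomology in a single degree and (b) $[H^\bullet_c(X_{h,n'},\Ql)_\theta]=[H^\bullet_c(\dot X_h,\Ql)_\theta]$ in the Grothendieck group; the cohomology of $\dot X_h$ itself need not be concentrated, and the theorem you are proving is explicitly a statement about the class $\pm[H^\bullet_c(\dot X,\Ql)_\theta]$ in $\G_0(G,\Ql)$, not about vanishing of individual cohomology groups. Finally, your explanation of the hypothesis $p>n$ (pro-$p$ radicals acting trivially on cohomology) is not what is at stake: $p>n$ is used for the tame/Howe parametrisation by admissible pairs and the explicit comparison with the Bushnell--Henniart construction, which you do invoke correctly at step~(iii). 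In short, your conceptual outline (truncate, concentrate/identify in a single degree, compactly induce, match with BH/Howe) is broadly compatible with the strategy of \cite{Iva19}, but the specifics you supply -- the degree, the filtration, the unconditional concentration, and the role of $p>n$ -- do not match, and if taken literally some of them would fail.
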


Now consider the horizontal {\it reduction mod $\ell$} maps $r_\ell$ in Diagram \ref{diagram}. These make sense after additional technical assumptions and are well-defined only on the level of Grothendieck groups. At this cost, we can prove clean compatibility statements about the parallel sides of the diagram. 
The results of Vignéras \cite{Vig96, Vig00a, Vig00b} further show that for $\mathbf{GL}_n$, the local Langlands correspondence can be partially reduced via $r_\ell$ to the modular setting. Putting these compatibilities together with the above theorem, we obtain our main result.

\begin{theorem}[{Theorem \ref{theorem: upshot}}]\label{thmintro}
Assume $p > n$. Let $\psi \in \mathscr{X}(L, \overline{\F}_\ell)^{\sg}$ be a character in strongly general position. Then $\pm [H^{\bullet}_c(\dot{X}, \overline{\F}_\ell)_{\eL \psi}]$ is up to sign an irreducible supercuspidal representation of $G$, and the right-hand side of Diagram \ref{diagram} partially realizes the modular local Langlands correspondence.
\end{theorem}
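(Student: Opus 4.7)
The plan is to bootstrap Theorem~\ref{thmiva} to the modular setting by tracing the hypothetical character $\psi \in \mathscr{X}(L, \overline{\F}_\ell)^{\sg}$ through the right-hand side of Diagram~\ref{diagram} and comparing with a chosen lift to characteristic zero via the horizontal reduction maps $r_\ell$. Concretely, I would first produce a smooth character $\theta \in \mathscr{X}(L,\overline{\Q}_\ell)^{\sg}$ that is in strongly general position and reduces to $\psi$, i.e.\ $r_\ell(\theta)=\psi$. Since $L^\times$ is locally profinite, any mod $\ell$ character of finite order lifts to characteristic zero; the real content is arranging that strong general position is preserved by the lift, which should be possible because strong general position is an open-type condition depending on the restriction of $\theta$ to finitely many unit subgroups $U_L^{(i)}$, and lifting the character on each level leaves enough freedom.

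Next I would verify the three compatibilities expressed by the commutativity of Diagram~\ref{diagram}. The bottom square (Weil-group side) is essentially formal: the rectifier $\mu$, inflation, and compact induction all commute with $r_\ell$ once the characters involved have integral models, and this is where Vignéras's construction of the modular local Langlands correspondence for $\mathbf{GL}_n$ \cite{Vig96, Vig00a, Vig00b} enters. The left square then specializes the characteristic zero picture via Theorem~\ref{thmiva}: the Euler characteristic $\pm [H^\bullet_c(\dot{X},\overline{\Q}_\ell)_\theta]$ is an irreducible supercuspidal whose Langlands parameter is $\cInd_{\W_L}^{\W_K}(\mu\cdot\theta)$. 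The crucial piece of the right square is the compatibility
\[
r_\ell\bigl([H^\bullet_c(\dot{X},\overline{\Q}_\ell)_\theta]\bigr) = [H^\bullet_c(\dot{X},\overline{\F}_\ell)_{\eL\psi}],
\]
which should follow from a torsion-integral comparison for étale cohomology of $\dot{X}$ together with the fact that in general position, the $\theta$-isotypic component must be extracted on the derived level in order to avoid the extra multiplicity that appears in small characteristic --- this is precisely what the $\eL\psi$ notation encodes.

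Once these pieces are in place, the proof assembles itself: apply $r_\ell$ to the equality of classes provided by Theorem~\ref{thmiva} on the characteristic zero side and use the compatibilities to transport it to the mod $\ell$ side. To conclude that $\pm [H^\bullet_c(\dot X,\overline{\F}_\ell)_{\eL\psi}]$ is, up to sign, an irreducible supercuspidal, I would invoke Vignéras's classification: the reduction of a characteristic zero supercuspidal corresponding via Langlands to a parameter whose reduction remains irreducible is itself irreducible supercuspidal, and the strong general position hypothesis on $\psi$ is tailored to guarantee irreducibility of $\cInd_{\W_L}^{\W_K}(\mu\cdot\psi)$.

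The main obstacle I expect is the middle compatibility, namely showing that taking the $\theta$-isotypic component of the cohomology of $\dot{X}$ commutes with reduction mod $\ell$ in the appropriate derived sense. The geometric side needs an integral $\overline{\Z}_\ell$-model for the relevant cohomology complex of $\dot{X}$, which requires care because $\dot{X}$ is a $p$-adic object of infinite type. Controlling the torsion in this complex and explaining why idempotent truncation to the $\psi$-weight space must be derived (producing $\eL\psi$ rather than a naive isotypic part) for small $\ell$ is the delicate technical heart of the argument; strong general position of $\psi$ is what ultimately isolates a single characteristic zero lift class and forces the right-hand Euler characteristic to remain irreducible.
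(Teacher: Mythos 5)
Your proposal follows essentially the same route as the paper: lift $\psi$ to an integral $\theta$ in strongly general position over $\overline{\Q}_\ell$ (using the $\Gal(L/K)$-equivariant isomorphism on $\Irr(T^1_h,-)$), apply the characteristic-zero theorem of Ivanov, transport the resulting identity via the reduction compatibilities of Diagram~\ref{diagram}, and conclude with Vignéras's comparison theorem. You correctly flag the key technical point — that $r_\ell[H^\bullet_c(\dot X,\overline{\Q}_\ell)_\theta]=[H^\bullet_c(\dot X,\overline{\F}_\ell)_{\eL\psi}]$ requires an integral model and a derived isotypic part — which the paper handles via Rickard's complexes and freeness of the $T_h$-action; the only slight misstatement is that strong general position does not single out a unique lift (there are $\ell^m$ of them, all reducing to the same $M(\psi)$), but this does not affect the argument.
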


Taking the isotypic part of $\psi$ naively results in an extra multiplicity for those $\ell$ which are small with respect to $G$. We make this explicit in Theorem \ref{theorem: naive upshot}.

\subsection{Consequences and further questions}

Our Theorem \ref{thmintro} shows that the cohomology of $p$-adic Deligne--Lusztig spaces for $\mathbf{GL}_n$ encodes a big part of modular representation theory of the locally profinite group $G = \mathbf{GL}_n(K)$.  

The employed compatibilities between the sides of Diagram \ref{diagram} are quite general, so one could hope to apply them for other reductive groups $\mathbf{G}$ where the theory is less developed. In such a setting, there is a good notion of $p$-adic Deligne--Lusztig spaces \cite{Iva20b}. 
However, the modular local Langlands correspondence -- let alone its relation to the characteristic zero case -- is not understood. A better understanding of $H^{\bullet}_c(\dot{X}, \overline{\F}_\ell)$ may be fruitful.

This leads to another question: can we understand the cohomology $H^{\bullet}_c(\dot{X}, \overline{\F}_\ell)$ already on the level of the bounded derived category $\D^b(G, \overline{\F}_\ell)$ of $G$-representations? Indeed, such an understanding is necessary for mimicking the geometric arguments of characteristic zero Deligne--Lusztig theory in the modular setting. 
Already on the level of classical Deligne--Lusztig theory with modular coefficients, this carries important information \cite{BR02, BDR16, Dud17}.

The geometric methods are close in spirit to the recent construction \cite{FS21}, while having more explicit features. One could hope to use them to get a better handle on this approach.

\subsection{Structure of this article}
After the current \S \ref{section: introduction}, this article is structured as follows. 
In \S \ref{section: grothendieck groups of smooth representations} we discuss the smooth representation theory of locally profinite groups. We recall the notion of reduction modulo $\ell$, relating characteristic zero and characteristic $\ell$ coefficients.
In \S \ref{section: the structure of T and its characters} we review the smooth character theory of the $T = L^\times$ and discuss the notion of strongly general position of characters.
In \S \ref{section: local langlands correspondence} we recall the modular local Langlands correspondence of Vignéras and its relationship with the usual local Langlands correspondence via reduction modulo $\ell$.
In \S \ref{section: equivariant etale cohomology} we review certain homotopical complexes representing étale cohomology with group actions. These were constructed by Rickard \cite{Ric94} and give an important tool for our comparison of characteristic zero and characteristic $\ell$ cohomology.

In \S \ref{section: p-adic deligne--lusztig theory} we turn towards $p$-adic Deligne--Lusztig theory, concentrating on the hyperspecial Deligne--Lusztig space of Coxeter type for $\mathbf{GL}_n$. We generalize the construction of the isotypic parts in the étale cohomology groups $H^\bullet_c(\dot{X}, \Lambda)$ of \cite{Iva18, Iva19, Iva20a, Iva20b} to the finer setting of the bounded derived category $\D^b(G, \Lambda)$ for $\Lambda$ such as $\Zl$ and $\Fl$. At the same time, we record a few statements from \cite{Iva18,Iva19,Iva20a,Iva20b} about the characteristic zero case and employ them to get control over its characteristic $\ell$ counterpart.
We then use the body of this paper to establish the necessary compatibilities between the vertical maps and the horizontal reduction modulo $\ell$ in Diagram \ref{diagram}, deducing Theorem \ref{theorem: upshot}.
We finally comment on a non-derived version of our result in Theorem \ref{theorem: naive upshot}, leading to extra multiplicities for those $\ell$ which are small with respect to $\mathbf{G}$. This yields extra information about the structure of the cohomology.

\subsection{Notation}\label{notation: non-archimedean fields}
Throughout, $K$ will be a local non-archimedean field with ring of integers $\O_K$, maximal ideal $\mathfrak{p}_K$ and residue field $k = \F_q$ of size $q$ and characteristic $p$. 
A uniformizer of $\O_K$ is denoted $\varpi = \varpi_K$. Such $K$ is either a finite extension of the $p$-adic numbers $\Q_p$ with residue field $\F_q$, or the function field $\F_q(\!(\varpi)\!)$. 

We fix an integer $n \in \N$ and denote by $L$ the degree $n$ unramified extension of $K$. Thus the residue field of $L$ is $\F_{q^n}$. 
We fix a different prime $\ell\neq p$ and let $m = v_{\ell}(q^n-1)$ be the $\ell$-adic valuation of $q^n-1 =|\F^\times_{q^n}|$. In other words, $\ell^m$ is the size of the $\ell$-torsion part of $\F^\times_{q^n}$.
 
\subsection{Acknowledgements}
This paper originated from my master thesis written at the University of Bonn during the academic year 2020/2021. I want to thank Alexander Ivanov for mathematical discussions and feedback on earlier drafts. I further want to thank Peter Scholze for suggestions about local systems, leading to a clarification of the result for small $\ell$. 

During my stay at the University of Bonn, I was supported by the DAAD study scholarship for graduates of all disciplines (57440925).
Some final improvements of this paper were done at the Institute of Science and Technology Austria.
\section{Grothendieck groups of smooth representations}
\label{section: grothendieck groups of smooth representations}

We review the representation theory of locally profinite groups in sufficient generality valid also for characteristic $\ell$ coefficients. A very good reference is the book \cite{Vig96}. See also \cite{BH06} for coefficients of characteristic zero.

\subsection{Coefficients rings}\label{section: coefficient rings}
We will be considering representations with coefficients in a commutative ring $\Lambda$. The interest lies in the following choices.

\begin{setup}\label{setting: coefficient rings}
Fix a prime $\ell \neq p$. Let $\Q_\ell$, $\Z_\ell \supseteq \mathfrak{m}$, $\F_\ell$ denote respectively the $\ell$-adic numbers, their ring of integers with its maximal ideal, and their residue field of size $\ell$. 
By $\overline{\Q}_\ell$, $\overline{\Z}_\ell \supseteq \overline{\mathfrak{m}}$, $\overline{\F}_\ell$ we mean the algebraic closure of $\Q_\ell$ with its ring of integers, its maximal ideal, and residue field. 
In general, we write $E$, $\O_E \supseteq \mathfrak{m}_E$, $F$ for a field extension $\Q_\ell \subseteq E  \subseteq \overline{\Q}_\ell$ with ring of integers $\O_E$, its maximal ideal $\mathfrak{m}_E$, and residue field $F$. For finite extensions, a uniformizer of $\O_E$ is denoted $\varpi_E$.
\end{setup}

Denote respectively $\mu_r$, $\mu_{\infty}$, $\mu_{\ell^{\infty}}$, $\mu_{\ell^{\infty}}^{\perp}$ the functors of $r$-th roots of unity, roots of unity, roots of unity of order a power of $\ell$, roots of unity of order coprime to $\ell$. 
The canonical surjection $\O_E \to F$ induces a multiplicative map $\O_E^{\times} \rightarrow F^{\times}$. The {\it Teichmüller map} gives a canonical multiplicative section $\O_E^{\times} \leftarrow F^{\times}$. It identifies $F^\times = \mu_{\infty}(F)$ with the direct summand $\mu_{\ell^{\infty}}^{\perp}(E)$ of the multiplicative group $E^\times$ whose torsion part is $\mu_{\infty}(E) =\mu_{\ell^{\infty}}(E) \times \mu_{\ell^{\infty}}^{\perp}(E)$.

\subsection{Categories of smooth representations and their Grothendieck groups}
\label{subsection: categories of smooth representations and their grothendieck groups}

Given a locally profinite topological group $G$ and a coefficient ring $\Lambda$, we have the category of {\it smooth} representations $\Rep(G, \Lambda)$ of $G$ on $\Lambda$-modules, i.e. continuous representations with respect to the discrete topology on $\Lambda$-modules. We denote by $\K^b(G, \Lambda)$ and $\D^b(G, \Lambda)$ its bounded homotopy category and bounded derived category.

A closed subgroup $H \leq G$ is again locally profinite and we have the functors of compact induction $\cInd^G_H$, restriction $\Res^G_H$ and induction $\Ind^G_H$.
When $G$ possesses a compact open subgroup whose pro-order is invertible in $\Lambda$, all three of the above functors are exact \cite[\S I.5.10]{Vig96}. 

\begin{remark}\label{remark: clopeness}
Any open subgroup $H \leq G$ is automatically closed (hence clopen). Indeed, $H$ is the complement of its other cosets $gH$, which are open as images of $H$ under the homeomorphisms $g\cdot: G \to G$ for varying $g \in G$.
\end{remark}

If $H \leq G$ is clopen, $\cInd^G_H$, $\Res^G_H$ and $\Ind^G_H$ form two adjoint pairs in this ordering. Moreover, $\cInd^G_H$ is exact: for each $V \in \Rep(H, \Lambda)$, the underlying vector space of $\cInd^G_H V$ is given by $\bigoplus_{H\backslash G} V$. 
See \cite[\S I.5]{Vig96} for details.

\begin{definition}
A representation $V \in \Rep(G, \Lambda)$ is called 
\begin{itemize}
    \item of {\it finite length}, if it has a finite composition series,
    \item of {\it finite type}, if it is finitely generated as $G$-representation,
    \item  {\it admissible}, if $V^H$ is a finitely generated $\Lambda$-module for each compact open subgroup $H \leq G$.
\end{itemize}
\end{definition}
\noindent In order to have well-behaved Grothendieck groups, we will further restrict to the subcategory
\begin{equation*}
\rep(G, \Lambda) \subseteq \Rep(G, \Lambda)    
\end{equation*}
of representations of finite length.\footnote{This is necessary so that the Grothendieck group $\G_0(\rep(G, \Lambda))$ does not degenerate by Eilenberg swindle.}
We denote by $\Rep(G, \Lambda)^{\ft} \subseteq \Rep(G, \Lambda)$ the subcategory of representations of finite type.
A representation of finite length is automatically of finite type.

\begin{example}
Assume $\Lambda$ is artinian. Note that if $G$ is finite, $\rep(G, \Lambda) \cong \mo(\Lambda[G])$ is the category of finitely generated representations of $G$ over $\Lambda$. 
Similarly when $G$ is profinite, the conditions ``finite type" and ``finite length" are equivalent for smooth representations, so $\rep(G, \Lambda)$ is the subcategory of representations of finite type. 
\end{example}

\begin{lemma}\label{lemma: induction and finite type}
Let $\Lambda$ be a coefficient ring and $H \leq G$ a clopen subgroup. Then $\cInd^G_H$ preserves the property of being of finite type.
\end{lemma}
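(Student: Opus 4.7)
The plan is to exhibit an explicit finite $G$-generating set of $\cInd^G_H V$ starting from a finite $H$-generating set of $V$, using the concrete model of compact induction recalled in the preceding discussion.

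Concretely, I would pick generators $v_1, \dots, v_k \in V$ of $V$ as an $H$-representation. In the standard model where $\cInd^G_H V$ consists of compactly supported functions $f \colon G \to V$ satisfying $f(hg) = h f(g)$, with $G$ acting by right translation $(g \cdot f)(x) = f(xg)$, I let $\phi_j \in \cInd^G_H V$ be the function supported on the coset $H$ and determined by $\phi_j(1) = v_j$. The claim is that the $\phi_j$ generate $\cInd^G_H V$ as a $G$-representation, which proves the lemma.

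To justify the claim, any $f \in \cInd^G_H V$ has support contained in finitely many right cosets of $H$, so it splits as a finite sum $f = \sum_\alpha f_\alpha$ with each $f_\alpha$ supported on a single coset $H g_\alpha$. Each $f_\alpha$ is determined by $w_\alpha := f_\alpha(g_\alpha) \in V$, and a short computation gives $f_\alpha = g_\alpha^{-1} \cdot \phi_{w_\alpha}$, where $\phi_w$ denotes the function supported on $H$ with $\phi_w(1) = w$. Since $h \cdot \phi_w = \phi_{hw}$ for $h \in H$ and $\phi_\bullet$ is $\Lambda$-linear in $w$, the assumption that $v_1, \dots, v_k$ generate $V$ as an $H$-representation forces each $\phi_{w_\alpha}$ to lie in the $\Lambda[H]$-span of the $\phi_j$. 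Hence every $f$ lies in the $\Lambda[G]$-span of the $\phi_j$, as desired.

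I do not anticipate any serious obstacle: the argument is mostly a matter of reading off the correct generators from the explicit model, and the only care needed is to keep the left/right conventions straight in the formulas for the $G$-action on $\cInd^G_H V$. A more abstract alternative would be to invoke Frobenius reciprocity to present $V$ as a quotient of a finite direct sum $\bigoplus_j \cInd^H_{K_j} \Lambda$, where $K_j \leq H$ is a compact open subgroup fixing $v_j$ (furnished by smoothness of $V$), and then apply the exact functor $\cInd^G_H$ together with the transitivity $\cInd^G_H \circ \cInd^H_{K_j} = \cInd^G_{K_j}$ and the fact that each $\cInd^G_{K_j} \Lambda$ is cyclic as a $G$-representation.
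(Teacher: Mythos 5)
The paper states this lemma without proof, treating it as routine, so there is no paper argument to compare against. Your explicit computation is correct: the identities $f_\alpha = g_\alpha^{-1}\cdot\phi_{w_\alpha}$ and $h\cdot\phi_w = \phi_{hw}$ check out under the stated conventions, and together with $\Lambda$-linearity of $w \mapsto \phi_w$ they give that the $\phi_j$ generate $\cInd^G_H V$ over $\Lambda[G]$. One point worth spelling out is that each $\phi_j$ really does lie in $\cInd^G_H V$, i.e.\ is a smooth vector: since $v_j$ is smooth there is a compact open $K_j \leq H$ fixing $v_j$, and $K_j$ (being open in $H$, hence in $G$ by clopeness of $H$) fixes $\phi_j$ under right translation, because for $k\in K_j$ and $x\in H$ one has $\phi_j(xk) = xk\,v_j = x\,v_j = \phi_j(x)$ while both sides vanish off $H$. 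The abstract alternative you sketch, presenting $V$ as a quotient of $\bigoplus_j \cInd^H_{K_j}\Lambda$ and using exactness of $\cInd^G_H$ together with transitivity $\cInd^G_H\circ\cInd^H_{K_j} = \cInd^G_{K_j}$ and cyclicity of $\cInd^G_{K_j}\Lambda$, is also valid and perhaps cleaner since it avoids unwinding the function model.
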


\begin{notation}
Given a coefficient ring $\Lambda$ and a locally profinite group $G$, we denote by $\G_0(G, \Lambda)$ the Grothendieck group of the category $\rep(G, \Lambda)$ of smooth representations of $G$ over $\Lambda$ of finite length discussed above.

We denote by $\Irr(G, \Lambda)$ a set of representatives of isomorphism classes of irreducibles in $\rep(G, \Lambda)$.
The imposed finite length condition forces $\G_0(G, \Lambda)$ to be the free abelian group on $\Irr(G, \Lambda)$.
\end{notation}
 
When $G$ is abelian and $\Lambda$ is an algebraically closed field (and $G/G'$ is countable for any compact open subgroup $G'$), the Schur lemma holds: all elements of $\Irr(G, \Lambda)$ are one-dimensional. For more general $\Lambda$, we write $\Hom_{\Grp}(G, \Lambda^\times)$ for the character group.

\subsection{Integral representations}
Consider a coefficient ring $\Lambda$ among $E, \O_E, F$ as in Setup \ref{setting: coefficient rings}. In order to relate these different choices of coefficients, we recall the subcategory $\rep(G, E)^{\integral} \subseteq \rep(G, E)$ of {\it integral representations}.

\begin{definition}\label{integral}
A representation $V \in \Rep(G, E)$ is called {\it integral} if it lies in the essential image of the functor $(E \otimes_{\O_E} -): \Rep(G, \O_E) \to \Rep(G, E)$. 
\end{definition}
In other words, $V$ is integral if it contains a full $G$-stable $\O_E$-lattice $M$. A choice of such isomorphism is referred to as an {\it integral structure} of $V$.

\begin{definition}\label{integralGrot}
Let $\rep(G, E)^{\integral} \subseteq \rep(G, E)$ denote the full abelian subcategory of integral representations. 
We denote by $\G_0(G, E)^{\integral} \leq \G_0(G, E)$ the subgroup given by the Grothendieck group of $\rep(G, E)^{\integral}$.
\end{definition}

Notice that there is a small statement to check, namely that integral representations cut out an abelian subcategory closed on subobjects, quotients and extensions -- we then really have a map on Grothendieck groups, which is injective by Jordan--Hölder theorem.
\begin{lemma}\label{lemma: integral ses}
Let $0 \to V' \to V \to  V'' \to 0$ be a short exact sequence in $\rep(G, E)$. Then $V$ is integral $\iff$ both $V'$ and $V''$ are integral.
\end{lemma}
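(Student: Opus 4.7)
The plan is to work directly with $G$-stable $\O_E$-lattices $M \subseteq V$ satisfying $EM = V$, using that subrepresentations of smooth representations are smooth and that $E$ is flat over $\O_E$. Both implications then reduce to formal manipulations of short exact sequences of $\O_E$-modules inside $V$.

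For the forward direction, suppose $V$ has a $G$-stable $\O_E$-lattice $M$. I set $M' := M \cap V'$ and $M'' := \pi(M)$, where $\pi \colon V \twoheadrightarrow V''$ is the quotient. Both are visibly $G$-stable $\O_E$-submodules and fit into a short exact sequence $0 \to M' \to M \to M'' \to 0$. Applying the exact functor $E \otimes_{\O_E} -$, using $EM = V$, and noting $E \otimes_{\O_E} V' = V'$ and $E \otimes_{\O_E} V'' = V''$ (since $V'$ and $V''$ are already $E$-vector spaces), comparison with the original sequence $0 \to V' \to V \to V'' \to 0$ forces $EM' = V'$ and $EM'' = V''$. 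Hence both $V'$ and $V''$ are integral.

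For the backward direction, given $G$-stable lattices $M' \subseteq V'$ and $M'' \subseteq V''$, I take $M := \pi^{-1}(M'') \subseteq V$. This is manifestly a $G$-stable $\O_E$-submodule containing $V'$. To see $EM = V$, given any $v \in V$ I write $\pi(v) = \sum_i a_i m''_i$ with $a_i \in E$ and $m''_i \in M''$, and pick $N$ so that $\varpi_E^N a_i \in \O_E$ for all $i$; then $\varpi_E^N v \in M$, so $v \in EM$ and $EM = V$.

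The main obstacle, if any, is the precise meaning of ``$\O_E$-lattice'' in the smooth setting. If one also demands that each $M^H$ (for $H \leq G$ compact open) be finitely generated over $\O_E$, then the naive preimage $\pi^{-1}(M'')$ is too large (it contains all of $V'$), and the backward step needs the refinement $M := M' + \sigma(M'')$ for a suitably averaged non-equivariant lift $\sigma$ of $M''$ into $V$. This refinement is the only genuinely technical point; the overall logical structure of the argument is unchanged, and the essential input is still exactness of $E \otimes_{\O_E} -$ plus the tautology that tensoring an $E$-module up to $E$ gives the same $E$-module back.
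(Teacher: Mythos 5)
Your forward direction is exactly the paper's argument (intersect and project the ambient lattice $M$), so that half is fine.

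The backward direction is where the real content lives, and your primary attempt there does not work. Taking $M := \pi^{-1}(M'')$ produces a $G$-stable $\O_E$-submodule with $EM = V$, but this is not an $\O_E$-lattice in the sense required: it contains all of $V'$, hence contains $E$-lines (equivalently, it fails any boundedness or finite-generation condition that one must impose to keep the notion of ``integral'' from being vacuous — note that without such a condition $M := V$ itself would always be an integral structure, contradicting Remark \ref{example:Z}). You do flag this in your closing paragraph, but the caveat is phrased as if it were an optional choice of convention; it is not. So the version of the backward step that you actually write out is simply incorrect.

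Your proposed repair, $M := M' + \sigma(M'')$ for ``a suitably averaged non-equivariant lift $\sigma$,'' is also not right as stated and is too vague to assess. For a generic linear lift $\sigma$, the module $M' + \sigma(M'')$ is not $G$-stable: for $g \in G$ and $m'' \in M''$ the defect $g\sigma(m'') - \sigma(g m'')$ lands in $V'$ but has no reason to lie in $M'$. ``Averaging'' $\sigma$ toward equivariance cannot fix this in general (it would require splitting the extension, and for a locally profinite $G$ one cannot average over the group anyway). The paper's proof resolves this differently: one lifts a finite $\O_E[G]$-generating set of $M''$ to $\widetilde{M}''$, and then \emph{rescales $M'$} by a single scalar $c \in E^\times$ large enough that the $G$-translates of the finitely many lifted generators fall into $cM' + \widetilde{M}''$; then $M := cM' + \widetilde{M}''$ is $G$-stable and is a genuine lattice. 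The point is to absorb the failure of $\sigma$ to be equivariant by enlarging the lattice in $V'$, not by improving $\sigma$. Your summary remark that ``the overall logical structure of the argument is unchanged'' understates this: the rescaling step is precisely the content of the backward implication.
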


\begin{proof}
Given an integral structure $M$ on a representation $V$, we obtain an integral structure on any subrepresentation resp. quotient by intersecting resp. quotienting $M$. On the other hand, if $V'$, $V''$ have integral structures $M'$, $M''$, we obtain an integral structure $M$ on their extension $0 \to V' \to V \to V'' \to 0$ by lifting $M''$ to some $\widetilde{M}''$, rescaling $M'$ by some $c \in E^{\times}$ so that $G$ maps some finite generating set of $\widetilde{M}'' \in \rep(G, \O_E)$ into $cM' + \widetilde{M}''$, and finally putting $M:= cM' + \widetilde{M}''$.
\end{proof}

\begin{example}
If $G$ is profinite, an integral structure for $V \in \rep(G, E)$ always exists: one takes any finite generating set of $V$ and closes it on translations by $G$. The resulting set is still finite, hence generates an $\O_E$-lattice $M$ inside $V$.
\end{example}

\begin{remark}\label{example:Z}
Note that for locally profinite $G$ an integral structure need not exist.
The basic counterexample appears for characters: A character $\theta: G \to E^\times$  is integral if and only if it factors through the inclusion $\O_E^{\times} \subseteq E^{\times}$.

Indeed -- up to a scalar, there is only one candidate for an integral structure $M$ of the one-dimensional representation given by $\theta$. This $M$ is stable if and only if $\theta$ doesn't hit any element with negative $\ell$-adic valuation, i.e. if and only if $\theta$ hits only elements of $E^{\times}$ with zero $\ell$-adic valuation.
\end{remark}

\begin{example}\label{example:T}
A relevant example of the above failure appears for the multiplicative group $G:= K^{\times}$ of a non-archimedean local field $K$. Any smooth character of $G$ is then uniquely determined by its restriction to a smooth character of the profinite group $\O_K^{\times}$ and its value at some uniformizer $\varpi_K$. 
This gives an identification of abelian groups
$$\Hom_{\Grp}(K^\times, E^\times) \cong \Hom_{\Grp}(\O_K^{\times}, E^\times) \times E^\times.$$

Since each smooth character $\theta$ of $\O_K^{\times}$ factors through a finite quotient, its image lies in the torsion subgroup $\mu(E)$ whose elements have zero $\ell$-adic valuation. 
The integrality of $\theta$ thus depends only on the $\ell$-adic valuation of the image of the uniformizer. The subgroup of integral characters thus identifies as $\Hom_{\Grp}(K^{\times}, E^{\times})^{\integral} = \Hom_{\Grp}(\O_K^{\times}, E^\times) \times \O_{E}^\times \subseteq \Hom_{\Grp}(\O_K^{\times}, E^\times) \times E^\times$.
\end{example}

\begin{example}
For $p$-adic reductive groups, the above examples describe the essence of the failure of integrality -- by \cite[II.4.13]{Vig96}, a representation of such $G$ over $\overline{\Q}_\ell$ is integral if and only if the central character of its cuspidal support is integral.

For $G = GL_n(K)$, we have $Z(GL_n(K)) = K^\times$ via the diagonal embedding. The integrality of a cuspidal representation of $GL_n(K)$ is thus equivalent to the integrality of the corresponding character of $K^\times$, discussed in the previous example.
\end{example}

\begin{lemma} \label{lemma:induction and integrality}
Let $G$ be a locally profinite group and $H$ an open subgroup. If a smooth representation $V$ of $H$ is integral, then so is $\cInd_H^G V$.
\end{lemma}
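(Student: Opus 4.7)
The plan is to produce an explicit $G$-stable $\O_E$-lattice inside $\cInd_H^G V$ from a given $H$-stable lattice inside $V$. By the definition of integrality, choose $M \subseteq V$ an $\O_E$-lattice stable under $H$; concretely, $M$ is a smooth $\O_E[H]$-module with $V \cong E \otimes_{\O_E} M$ as $H$-representations. The candidate lattice in $\cInd_H^G V$ is simply $N := \cInd_H^G M$, formed as the submodule of compactly supported functions $f \colon G \to M$ satisfying $f(hg) = h \cdot f(g)$.

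First I would verify that $N$ is a genuine smooth $\O_E[G]$-submodule of $\cInd_H^G V$: smoothness is built into the definition of compact induction (functions of compact support modulo $H$ become stable under a sufficiently small open subgroup on each coset in their support), and $G$-stability follows from the right action by translation. Picking a system of coset representatives $\{g_i\}$ for $H \backslash G$ identifies the underlying $\O_E$-module of $N$ with $\bigoplus_i g_i \cdot M$, mirroring the description of the underlying vector space of $\cInd_H^G V$ as $\bigoplus_i g_i \cdot V$ recalled in §\ref{subsection: categories of smooth representations and their grothendieck groups}.

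Next I would check the base change identity $E \otimes_{\O_E} N \cong \cInd_H^G V$. This follows from the coset decomposition, since tensor product commutes with direct sums and $E \otimes_{\O_E} (g_i \cdot M) = g_i \cdot V$ under the natural inclusion. Equivalently, the fact that $M$ is $\O_E$-torsion-free (being a submodule of the $E$-vector space $V$) ensures that the canonical map $N \to \cInd_H^G V$ is injective with cokernel of bounded $\ell$-torsion on each coset, so $N$ generates $\cInd_H^G V$ over $E$.

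There is no real obstacle here; the only mild subtlety is to confirm that compact induction commutes with the base change $(E \otimes_{\O_E} -)$, which one handles either by the coset description above or, more cleanly, by noting that $\cInd_H^G$ is a left adjoint (to $\Res$, as recorded in the excerpt for $H$ clopen) and therefore commutes with colimits — in particular with the filtered colimit that computes $E \otimes_{\O_E} M$ from $M$. Either way, $N$ exhibits $\cInd_H^G V$ as integral.
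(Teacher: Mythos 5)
Your proof is correct and follows essentially the paper's approach: both identify $\cInd_H^G M$ as the candidate integral structure and verify $E \otimes_{\O_E} \cInd_H^G M \cong \cInd_H^G V$, the paper by checking injectivity and surjectivity directly (via a rescaling argument over the finite support), you by the coset-wise direct-sum description and the fact that $E \otimes_{\O_E}(-)$ commutes with direct sums. One small slip: the cokernel $V/M$ is $(E/\O_E)$-torsion, hence unbounded, so ``cokernel of bounded $\ell$-torsion'' is not the right phrase, but this parenthetical plays no role in your main argument.
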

\begin{proof}
Let $V \in \Rep(H, E)$, and let $M$ be an integral structure of $V$. We claim that $\cInd ^G_{H} M$ gives an integral structure of $\cInd ^G_{H} V$. The natural map $E \otimes_{\O_E} \cInd ^G_{H} M \to \cInd ^G_{H} V$ is injective because $M$ is a lattice in $V$. 

On the other hand, any $f \in \cInd ^G_{H} V$ is compactly supported modulo $H$. Hence it is supported on finitely many points of the discrete quotient space $H\backslash G$, this being the case by clopeness of $H$. Because $M$ is a full lattice, we can now multiply $f$ by a suitable scalar so that its values for each of these finitely many cosets land in $M$ -- indeed, $M$ is stable under the $H$-action, so all values of $f$ on a given coset lie in $M$ whenever one of them does. This shows surjectivity, proving the lemma.
\end{proof}
In other words, $\cInd^G_{H}$ canonically preserves integral structures on smooth representations.

\subsection{Reduction modulo \texorpdfstring{$\ell$}{l}}\label{reductionmodulol}
Consider a coefficient ring $\Lambda$ among $E, \O_E, F$ as in Setup \ref{setting: coefficient rings}. For a locally profinite group $G$, we will discuss the existence of {\it reduction modulo $\ell$} map
$$\G_0(G, E) \geq \G_0(G, E)^{\integral} \xrightarrow{r_\ell} \G_0(G, F).$$

This $r_\ell$ is only well-defined on the level of Grothendieck groups of finite length representations. The domain $\G_0(G, E)^{\integral}$ of $r_\ell$ was introduced in Definition \ref{integralGrot}.

When $G$ is finite, $r_\ell$ forms one side of the so-called Cartan--Brauer triangle; it is usually called the {\it decomposition map} \cite{Sch}. 
With certain technical restrictions, $r_\ell$ is defined also in the locally profinite setting \cite{Vig96}. 
We now describe the construction of $r_\ell$; the well-definedness in cases of interest is discussed immediately afterwards.

\begin{construction}\label{constr}
Start with $V \in \rep(G, E)^{\integral}$ and choose an integral structure $M$. Then $M/\mathfrak{m}M$ is naturally a $G$-representation with coefficients in the residue field $F$. If $M/\mathfrak{m}M$ has finite length, it lies in $\rep(G, F)$ and we may pass to its class $r_\ell(V) := [V] \in \G_0(G, F)$. 
\end{construction}

\begin{remark}\label{remark: reduction on subcategories}
Whenever Construction \ref{constr} determines a well-defined map $\mathcal{A} \to \G_0(G, F)$ independent of the choices of integral structures on some abelian subcategory $\mathcal{A}$ of $\rep(G, E)^{\integral}$, it automatically gives a map 
$\G_0(\mathcal{A}) \to \G_0(G, F)$ on Grothendieck groups. We will denote this map by $r_\ell$ as well.

To see this, one only needs to note that relations given by short exact sequences are sent to such relations again. Given a short exact sequence $0 \to V' \to V \to V'' \to 0$ in $\mathcal{A}$, one can choose integral structures fitting into a short exact sequence $0 \to M' \to M \to M'' \to 0$ as in the proof of Lemma \ref{lemma: integral ses} and reduce via these, obtaining the desired relation. 
\end{remark}

\begin{remark}
Note that $r_\ell$ preserves vector space dimension.
\end{remark}

The following {\it Brauer-Nesbitt principle} addresses the independence of $r_\ell$ on the choice of integral structure in reasonable generality.

\begin{proposition}[{\cite[\S I.9.6]{Vig96}}]\label{BrauerNesbitt}
Let $G$ be a locally profinite group. Assume $V \in \rep(G, E)$ is admissible and contains an integral structure $M$ of finite type such that $M/\mathfrak{m}M$ has finite length.
Then $r_\ell(V) \in \G_0(G, F)$ is well-defined and independent of the choice of $M$.
\end{proposition}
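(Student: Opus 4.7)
The plan is a Brauer--Nesbitt-style devissage, reduced to the discretely valued subring case. Given two integral structures $M_1, M_2 \subseteq V$, both of finite type and with $M_i/\mathfrak{m}M_i$ of finite length, the goal is to show $[M_1/\mathfrak{m}M_1] = [M_2/\mathfrak{m}M_2]$ in $\G_0(G, F)$.

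First I would normalize the lattices to become comparable. Pick a finite $\O_E[G]$-generating set $v_1, \ldots, v_r$ of $M_1$, which exists by the finite type hypothesis. Because $M_2$ is a full $\O_E$-lattice in $V = E \otimes_{\O_E} M_2$, one can find $c \in E^\times$ with $cv_i \in M_2$ for every $i$; the $G$-stability of $M_2$ then forces $cM_1 \subseteq M_2$. Since rescaling an integral structure by a unit of $E$ leaves its reduction unchanged up to isomorphism, we may assume $M_2 \subseteq M_1$. The symmetric argument applied to $M_2$ and $M_1$ produces a non-zero $t \in \O_E$ with $tM_1 \subseteq M_2$.

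Next I would reduce to the DVR case. Finite type of $M_1$ and $M_2$ over $\O_E[G]$ means only finitely many scalars are involved in a presentation of each lattice and in the inclusions $M_2 \subseteq M_1$, $tM_1 \subseteq M_2$. All such scalars lie in the ring of integers of some finite subextension $E_0/\Q_\ell$ of $E$. Descending $V, M_1, M_2$ along $\O_{E_0} \hookrightarrow \O_E$ and then base-changing back is compatible with reduction mod $\ell$ via the map of residue fields $F_0 \hookrightarrow F$, so it suffices to establish the equality after descending. Thus we may assume $\O_E$ is a discrete valuation ring with uniformizer $\varpi$, and $t = \varpi^N$ for some $N$.

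Finally, the devissage. The iterated short exact sequences $0 \to \varpi^i M_1/\varpi^{i+1}M_1 \to M_1/\varpi^{i+1}M_1 \to M_1/\varpi^i M_1 \to 0$, together with the isomorphism $\varpi^i M_1/\varpi^{i+1}M_1 \cong M_1/\varpi M_1$ supplied by multiplication by $\varpi^{-i}$, show that $M_1/\varpi^N M_1$ has finite length; consequently so do its quotient $M_1/M_2$ and $M_1/\varpi M_2$. In $\G_0(G, F)$ the two short exact sequences
\[
0 \to \varpi M_1/\varpi M_2 \to M_1/\varpi M_2 \to M_1/\varpi M_1 \to 0,
\]
\[
0 \to M_2/\varpi M_2 \to M_1/\varpi M_2 \to M_1/M_2 \to 0
\]
yield $[M_1/\varpi M_2] = [\varpi M_1/\varpi M_2] + [M_1/\varpi M_1] = [M_2/\varpi M_2] + [M_1/M_2]$. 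The isomorphism $\varpi M_1/\varpi M_2 \cong M_1/M_2$ (via $\varpi^{-1}$) cancels the cross terms and leaves $[M_1/\varpi M_1] = [M_2/\varpi M_2]$, proving both well-definedness and independence of choice.

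The main obstacle I anticipate is controlling finite length of the intermediate modules; this is where the admissibility of $V$ enters, ensuring that the reductions are admissible (hence Noetherian-like) so that the finite length of $M_1/\varpi M_1$ propagates through the filtration steps. The descent to a finite subextension is routine once finite generation of lattices is invoked, and the rest of the proof is entirely formal from the two short exact sequences above.
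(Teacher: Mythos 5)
The paper does not prove this proposition; it cites it directly from Vignéras \cite[\S I.9.6]{Vig96}, so there is no internal proof to compare against. Evaluated on its own terms, your outline has the right shape (normalize lattices, descend to a DVR, devissage), the normalization step is fine, and the DVR endgame --- the two short exact sequences
\[
0 \to \varpi M_1/\varpi M_2 \to M_1/\varpi M_2 \to M_1/\varpi M_1 \to 0, \qquad
0 \to M_2/\varpi M_2 \to M_1/\varpi M_2 \to M_1/M_2 \to 0,
\]
together with the twist $\varpi M_1/\varpi M_2 \cong M_1/M_2$ --- is correct.

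The gap is the descent to a finite subextension $E_0/\Q_\ell$, which is asserted to be routine but is not. First, the phrase ``only finitely many scalars are involved in a presentation'' has no justification: $\overline{\Z}_\ell$ and hence $\overline{\Z}_\ell[G]$ are not Noetherian, so finite type does not entail finite presentation, and a finitely generated $\overline{\Z}_\ell[G]$-module can require infinitely many scalars to present. Second, even granting finitely many distinguished scalars, the naive descent fails: if $M_1' := \O_{E_0}[G]\{v_1,\dots,v_r\}$ is the submodule generated over the smaller ring, the natural map $\O_E \otimes_{\O_{E_0}} M_1' \to M_1$ is surjective but need not be injective, because the $E$-linear dependencies among the translates $g v_i$ need not be defined over $E_0$. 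Concretely, already with trivial $G$-action on $V = E$ and generators $v_1 = 1$, $v_2 = \alpha$ with $\alpha \in \O_E \setminus \O_{E_0}$, the map $E \otimes_{E_0} (E_0 + E_0\alpha) \to V$ has a $1$-dimensional kernel. So $M_1'$ does not ``represent'' $M_1$ over $E_0$, and the statement cannot be transferred back. The descent of admissible smooth representations and their lattices to a finite subfield of $\overline{\Q}_\ell$ is a genuine theorem within Vignéras's framework, not a formality, and it is precisely here that admissibility earns its keep. Relatedly, your stated role for admissibility --- propagating finite length through the filtration --- is misplaced: once over a DVR, the finite length of $M_1/\varpi^N M_1$ follows purely from the devissage you wrote; admissibility is needed earlier, for the descent (and, in Vignéras's treatment, for the character-theoretic bookkeeping underlying independence in $\G_0(G,F)$).
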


\begin{discussion}\label{reductionmodulolitems}
Let us now record three important instances when Construction \ref{constr} works. These include all cases relevant later. 
\begin{itemize}
\item[(i)] If $G$ is profinite, the above construction yields a well-defined map
$$r_\ell: \G_0(G, E) \to \G_0(G, F).$$
Indeed, all representations $V$ in question are automatically finite-dimensional by compactness of $G$. We have already observed this implies $\rep(G, E)^{\integral} = \rep(G, E)$ for profinite $G$. The assumptions on admissibility and finite length of Proposition \ref{BrauerNesbitt} are clearly satisfied for finite dimensional representations, so the reduction $r_\ell(V) \in \G_0(G, F)$ is independent of the choices made. This is a priori true for irreducible $V$, but immediately extends to all representations by looking at their composition series. By Remark \ref{remark: reduction on subcategories}, we get a map $\G_0(G, E) \to \G_0(G, F)$. 

\item[(ii)] If $G$ is a locally profinite group and $\G_0(G, E)^{\fd, \: \integral}$ is the Grothendieck group of the category $\rep(G, E)^{\fd, \: \integral}$ of finite-dimensional integral representations, the above construction yields a well-defined map
$$r_\ell: \G_0(G, E)^{\fd, \: \integral} \to \G_0(G, F)^{\fd}.$$  

The argument is the same as in (i), the only difference being that the finite-dimensionality comes as an assumption. 

\item[(iii)] If $G$ is a $p$-adic reductive group, the above construction yields a well-defined map
$$r_\ell: \G_0(G, E)^{\integral} \to \G_0(G, F).$$

Such $G$ is locally profinite, but we are now dealing with infinite-dimensional representations. Luckily, the assumptions of Proposition \ref{BrauerNesbitt} (for irreducible $V$) hold by \cite[II.5.11.b]{Vig96}. Note that the admissibility condition in this proposition is automatic, since any irreducible representation (and hence any finite-length representation) of $G$ is admissible. The extension to all finite length representation and to a map of Grothendieck groups is again immediate.
\end{itemize}
\end{discussion}

\begin{example}\label{finfib}
\label{remark: fibers of the reduction for abelian groups}
Take $G$ finite abelian and $\Lambda = \Ql$, $\Fl$.
Then each irreducible representation is a character and the reduction $r_\ell: \G_0(G, \Ql) \to \G_0(G, \Fl)$ is induced by postcomposing characters with $\mu_{\infty}(\Ql) = \mu_{\infty}(\Zl) \to \mu_{\infty}(\Fl)$.
Since $\mu_{\infty}(\Fl)$ sits inside $\mu_{\infty}(\Ql)$ as the direct summand $\mu_{\ell^\infty}^\perp(\Ql)$, the fibers of the surjection $\Irr(G, \overline{\Q}_\ell) \to \Irr(G, \overline{\F}_\ell)$ are given by $\Hom_{\Grp}(G, \mu_{\ell^\infty}(\Ql))$. In particular, the size of these fibers is given by the number of elements of $G$ whose order is a power of $\ell$.
\end{example}

\subsection{Compatibility of \texorpdfstring{$r_{\ell}$}{rl} and \texorpdfstring{$\cInd$}{cInd}}
Let $G$ be a locally profinite group and $H \leq G$ a clopen subgroup. 
The functor $\cInd_{H}^G: \Rep(H, \Lambda) \to \Rep(G, \Lambda) $ is exact by \S \ref{subsection: categories of smooth representations and their grothendieck groups}, but it need not restrict to a map $\rep(H, \Lambda) \rightarrow \rep(G, \Lambda)$ between finite length representations nor the associated Grothendieck groups.

\begin{notation}\label{notation: subcategories of representations}
We denote by $\rep(G, \Lambda)^{\subcat} \subseteq \rep(G, \Lambda)$ any Serre subcategory: a nonempty strictly full (abelian) subcategory closed on subobjects, quotients and extensions \cite[Tag 02MP]{Sta}. By looking at Jordan-Hölder series, such $\rep(G, \Lambda)^{\subcat}$ corresponds to a subset of irreducible objects in $\rep(G, \Lambda)$. Its Grothendieck group $\G_0(G, \Lambda)^{\subcat} \leq \G_0(G, \Lambda)$ is the subgroup spanned by these irreducibles.
\end{notation}

If $\cInd_{H}^G$ restricts to a functor $\rep(H, \Lambda)^{\subcat} \to \rep(G, \Lambda)^{\subcat}$, we get an induced map $\cInd_{H}^G: \G_0(H, \Lambda)^{\subcat} \to \G_0(G, \Lambda)^{\subcat}$. On classes of genuine representations, this sends $[V] \mapsto [\cInd_H^G(V)]$. We now discuss its behaviour with respect to $r_\ell$.

\begin{lemma}\label{lemma: cind and rl}
Let $G$ be a locally profinite group and $H \leq G$ a clopen subgroup. Consider subcategories as in Notation \ref{notation: subcategories of representations} such that the following two reduction maps $r_{\ell}$ are well-defined:
\begin{equation*}\label{equation: cind and rl}
\begin{tikzcd}
 \G_0(G, E)^{\subcat,  \integral} \arrow[r, "r_{\ell}"]& \G_0(G, F), &  &
 \G_0(H, E)^{\subcat, \integral} \arrow[r, "r_{\ell}"]& \G_0(H, F) 
\end{tikzcd}
\end{equation*}
Let $[V] \in \G_0(H, E)^{\subcat, \integral}$ be a class of a genuine representation $V$ such that $\cInd_H^G[V]$ lies in $\G_0(G, E)^{\subcat}$. Then it is integral and
\begin{equation*}
    r_{\ell} \circ \cInd_H^G [V] = \cInd_H^G \circ \ r_{\ell} [V] \in \G_0(G, F).
\end{equation*}
\end{lemma}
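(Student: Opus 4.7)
The plan is to fix an integral structure of $V$, transport it along $\cInd_H^G$ via Lemma \ref{lemma:induction and integrality}, and then observe that reduction modulo $\mathfrak{m}$ commutes with compact induction. Concretely, choose an integral structure $M$ of $V$, so that $M \in \Rep(H, \O_E)$ with $E \otimes_{\O_E} M \cong V$. Lemma \ref{lemma:induction and integrality} gives that $\cInd_H^G M$ is an integral structure of $\cInd_H^G V$; hence $\cInd_H^G V$ is integral and, combining with Lemma \ref{lemma: integral ses} applied to a composition series, every composition factor of $\cInd_H^G V$ is integral as well. This already yields $\cInd_H^G[V] \in \G_0(G, E)^{\subcat, \integral}$, settling the first assertion of the lemma.

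Next I would establish the canonical $F[G]$-module isomorphism
$$
(\cInd_H^G M)/\mathfrak{m}(\cInd_H^G M) \;\cong\; \cInd_H^G(M/\mathfrak{m}M).
$$
Per the discussion in \S \ref{subsection: categories of smooth representations and their grothendieck groups}, the underlying $\O_E$-module of $\cInd_H^G M$ is $\bigoplus_{H \backslash G} M$, with $G$ acting by permuting cosets and twisting by the stabilizing $H$-action. Since scalars in $\mathfrak{m}$ act diagonally and commute with the $G$-action, both sides identify canonically with $\bigoplus_{H \backslash G} M/\mathfrak{m}M$. Abstractly, this is a manifestation of the general fact that $\cInd_H^G$ is a left adjoint (to $\Res^G_H$) and therefore commutes with colimits, in particular with base change along $\O_E \twoheadrightarrow F$.

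Using $\cInd_H^G M$ as the chosen integral structure of $\cInd_H^G V$, the left-hand side of the claimed identity reads $r_\ell(\cInd_H^G[V]) = [(\cInd_H^G M)/\mathfrak{m}(\cInd_H^G M)] = [\cInd_H^G(M/\mathfrak{m}M)]$ in $\G_0(G, F)$ via the displayed isomorphism. On the other hand $r_\ell[V] = [M/\mathfrak{m}M]$ in $\G_0(H, F)$, and exactness of $\cInd_H^G$ (see \S \ref{subsection: categories of smooth representations and their grothendieck groups}) yields $\cInd_H^G(r_\ell[V]) = [\cInd_H^G(M/\mathfrak{m}M)]$ as well. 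The two sides agree, which is the content of the lemma. The main obstacle --- if one can call it such --- is the bookkeeping of finite-length conditions underlying the various Grothendieck groups: one needs $\cInd_H^G(M/\mathfrak{m}M)$ to have finite length so that both classes are well-defined, and one needs to interchange reduction modulo $\mathfrak{m}$ with $\cInd_H^G$ on the level of composition series. Both points are implicit in the standing hypotheses that the two reduction maps are defined together with $\cInd_H^G[V] \in \G_0(G, E)^{\subcat}$; once these are recorded, the argument collapses to the canonical isomorphism above.
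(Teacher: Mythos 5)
Your proof is correct and takes essentially the same approach as the paper: fix an integral structure $M$ of $V$, use Lemma \ref{lemma:induction and integrality} to obtain the integral structure $\cInd_H^G M$ of $\cInd_H^G V$, then observe that reduction modulo $\mathfrak{m}_E$ commutes with $\cInd_H^G$ via the explicit description $\bigoplus_{H\backslash G} M$. Your extra remarks (invoking Lemma \ref{lemma: integral ses} along a composition series, and the abstract left-adjointness argument) are harmless embellishments of the same idea.
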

\begin{proof}
First note that $\cInd_H^G[V] \in \G_0(G, E)^{\subcat, \integral}$.
Indeed, it has finite length by assumption and it is integral by Lemma \ref{lemma:induction and integrality} -- given $V \in \rep(H, E)$ with an integral structure $M$, the compact induction $\cInd ^G_{H} M$ gives an integral structure of $\cInd ^G_{H} V$.

Reducing by $r_\ell$ with respect to these two integral structures shows that
\begin{equation*}
    r_{\ell} \circ \cInd_H^G [V] = \cInd_H^G \circ \ r_{\ell} [V],
\end{equation*}
because $\cInd ^G_{H}$ over $\O_E$-coefficients commutes with modding out the image of the maximal ideal $\mathfrak{m}_E \subseteq \O_E$ by explicit inspection. 
\end{proof}

\begin{remark}\label{remark: cind and rl}
In particular, the setup of Lemma \ref{lemma: cind and rl} implies that $\cInd_H^G \circ \ r_{\ell} [V]$ is a well-defined class in the Grothendieck group of finite length representations $\G_0(G, F)$.
\end{remark}

\begin{corollary}\label{corollary: cind and rl square}
Suppose we are given subcategories of finite length representations as in Notation \ref{notation: subcategories of representations} so that the following square is well-defined. Then it commutes.
\begin{center}
\begin{tikzcd}
\G_0(G, E)^{\subcat, \integral} \arrow[r, "r_\ell"] &  \G_0(G, F)^{\subcat}   \\
\G_0(H, E)^{\subcat, \integral} \arrow[r, "r_\ell"] \arrow[u, "\cInd ^G_{H}"] &  \G_0(H, F)^{\subcat}  \arrow[u, "\cInd ^G_{H}"]
\end{tikzcd}
\end{center}
\end{corollary}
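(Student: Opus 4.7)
The plan is to reduce the corollary directly to Lemma \ref{lemma: cind and rl} by checking commutativity on a generating set of the source Grothendieck group.

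First I would observe that $\G_0(H, E)^{\subcat, \integral}$ is, by its very definition (Notation \ref{notation: subcategories of representations} combined with Definition \ref{integralGrot}), generated as an abelian group by classes $[V]$ of genuine representations $V \in \rep(H, E)^{\subcat}$ admitting an integral structure. Since the asserted equality of maps is $\Z$-linear in its input, it suffices to verify
\begin{equation*}
r_\ell \circ \cInd^G_H [V] = \cInd^G_H \circ\, r_\ell [V] \in \G_0(G, F)^{\subcat}
\end{equation*}
for each such generator $[V]$.

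Next I would unpack what the well-definedness of the square means on such a generator. The left vertical map being defined requires $\cInd^G_H$ to send $\rep(H, E)^{\subcat}$ into $\rep(G, E)^{\subcat}$, so in particular $\cInd^G_H[V]$ lies in $\G_0(G, E)^{\subcat}$. The existence of the top horizontal $r_\ell$ on this class then forces $\cInd^G_H[V]$ to be integral as well -- but this integrality is automatic and constructive, since an integral structure $M \subseteq V$ over $\O_E$ yields $\cInd^G_H M$ as an integral structure on $\cInd^G_H V$ by Lemma \ref{lemma:induction and integrality}. With these checks in place, the hypotheses of Lemma \ref{lemma: cind and rl} are satisfied on the nose, and that lemma delivers exactly the desired equality for the generator $[V]$.

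The only step requiring genuine input is precisely the one already carried out in Lemma \ref{lemma: cind and rl}, namely that compact induction over $\O_E$ commutes with reduction modulo $\mathfrak{m}_E$; beyond that, the proof of the corollary is purely formal bookkeeping about how Grothendieck groups of Serre subcategories are generated and how the maps in the diagram were defined. I would therefore keep the written proof to essentially two sentences: reduce to classes of integral representations, and invoke Lemma \ref{lemma: cind and rl}. The main obstacle I anticipate is purely notational -- making sure that the various subcategories $\rep(-,-)^{\subcat,\integral}$ and their Grothendieck groups are consistently identified so that no additional well-definedness verification is silently swept under the rug.
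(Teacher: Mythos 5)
Your proof is correct and follows essentially the same route as the paper: reduce to checking the identity on a generating set of classes of genuine representations and then invoke Lemma \ref{lemma: cind and rl}, whose hypotheses are supplied by the well-definedness of the square. The paper phrases this in one line by restricting attention to irreducible representations (which already freely generate the Grothendieck group), whereas you allow arbitrary genuine representatives; both choices are equivalent, and your more explicit discussion of why the generator-level hypotheses of the lemma hold is a reasonable expansion of the same argument.
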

\begin{proof}
Use Lemma \ref{lemma: cind and rl} for classes of irreducible representations $[V] \in \G_0(H, E)^{\subcat, \integral}$.     
\end{proof}

\subsection{Finite groups and permutation modules}

For a finite group $G$ and a commutative coefficient ring $\Lambda$, we have the subclass of objects in $\Rep(G, \Lambda)^{\ft}$ of {\it permutation modules}. By definition, these are cut out by the essential image of the free $\Lambda$-module functor 
$$\Lambda[-]: \FinSet_G \to \Rep(G, \Lambda)^{\ft} \qquad \text{sending} \qquad S \mapsto \Lambda[S].$$
For each $S \in \FinSet_G$, denote $\varepsilon: \Lambda[S] \to \Lambda$ the natural augmentation map given by summing up the coefficients in the canonical basis of $\Lambda[S]$.

We recall the following standard characterization of projective permutation modules.
\begin{lemma}\label{lemma: projectivity of permutation modules}
Let $H \leq G$ be a subgroup. Then the permutation module $\Lambda[G/H] \in \Rep(G, \Lambda)^{\ft}$ is projective if and only if $|H| \in \Lambda^\times$.
\end{lemma}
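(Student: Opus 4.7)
The plan is to reduce the question for $\Lambda[G/H]$ over $\Lambda[G]$ to the same question for the trivial $\Lambda[H]$-module $\mathbf{1}_H$, and then to handle this latter question explicitly via the averaging idempotent.

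First, I would identify $\Lambda[G/H] \cong \cInd_H^G \mathbf{1}_H = \Ind_H^G \mathbf{1}_H$ as $G$-representations. For a finite group $G$ with subgroup $H$, Frobenius reciprocity gives a natural isomorphism
$$\Hom_G(\Lambda[G/H], V) \cong \Hom_H(\mathbf{1}_H, \Res_H^G V) = V^H$$
for any $V \in \Rep(G, \Lambda)^{\ft}$. Since $\Res_H^G$ is exact (see \S\ref{subsection: categories of smooth representations and their grothendieck groups}), the functor $\Hom_G(\Lambda[G/H], -)$ is exact if and only if the fixed-point functor $(-)^H = \Hom_H(\mathbf{1}_H, -)$ is exact on $\Rep(H, \Lambda)^{\ft}$. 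In other words, projectivity of $\Lambda[G/H]$ over $\Lambda[G]$ is equivalent to projectivity of $\mathbf{1}_H$ over $\Lambda[H]$.

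Next, I would handle the ``if'' direction. Assuming $|H| \in \Lambda^\times$, set $e = \tfrac{1}{|H|} \sum_{h \in H} h \in \Lambda[H]$. Then $e$ is $H$-invariant and $\varepsilon(e) = 1$, so the map $\mathbf{1}_H \to \Lambda[H]$ sending $1 \mapsto e$ is an $H$-equivariant section of the augmentation $\varepsilon : \Lambda[H] \to \mathbf{1}_H$. Hence $\mathbf{1}_H$ is a direct summand of the free module $\Lambda[H]$, and therefore projective.

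Finally, for the ``only if'' direction, suppose $\mathbf{1}_H$ is projective. Then $\varepsilon : \Lambda[H] \to \mathbf{1}_H$ admits an $H$-equivariant section $s: \mathbf{1}_H \to \Lambda[H]$. Writing $s(1) = \sum_{h \in H} a_h \, h$, the identity $h' \cdot s(1) = s(1)$ for every $h' \in H$ forces all coefficients $a_h$ to coincide with a single $a \in \Lambda$. Then $1 = \varepsilon(s(1)) = a \cdot |H|$, exhibiting $|H|$ as a unit of $\Lambda$.

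The argument is essentially formal; the only step requiring care is the reduction from $G$-projectivity to $H$-projectivity via Frobenius reciprocity, but this is standard and uses nothing beyond exactness of $\Res_H^G$. The rest is the well-known Maschke-style idempotent calculation.
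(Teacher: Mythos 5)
Your approach reduces the question to the projectivity of the trivial module $\mathbf{1}_H$ over $\Lambda[H]$ via $\Hom_G(\Lambda[G/H], -) \cong (-)^H \circ \Res_H^G$; this is a genuinely different route from the paper's, which works directly with a splitting of $\Lambda[G] \twoheadrightarrow \Lambda[G/H]$. The final idempotent computation over $\Lambda[H]$ is correct, and the equivalence you reduce to is true, but the justification you offer for it has a gap that breaks the ``only if'' direction of the lemma. You assert that, because $\Res_H^G$ is exact, $\Hom_G(\Lambda[G/H], -)$ is exact if and only if $(-)^H = \Hom_H(\mathbf{1}_H,-)$ is exact. Exactness of $\Res_H^G$ gives only one implication: if $(-)^H$ is exact on $\Lambda[H]$-modules then so is the composite $(-)^H \circ \Res_H^G$ on $\Lambda[G]$-modules. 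It does not give the converse, which is exactly what your ``only if'' step needs: exactness of $(-)^H \circ \Res_H^G$ is a priori only a statement about short exact sequences that arise by restriction from $G$, not about arbitrary short exact sequences of $\Lambda[H]$-modules, so it is a priori weaker than $\Lambda[H]$-projectivity of $\mathbf{1}_H$.

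The repair is standard but needs to be written. If $\Lambda[G/H]$ is $\Lambda[G]$-projective, then $\Res_H^G \Lambda[G/H]$ is $\Lambda[H]$-projective, because $\Lambda[G]$ is free as a $\Lambda[H]$-module, so restriction along $\Lambda[H] \hookrightarrow \Lambda[G]$ preserves projectivity. The $H$-fixed basepoint $1\cdot H \in G/H$ then splits off a direct summand of $\Res_H^G \Lambda[G/H]$ isomorphic to $\mathbf{1}_H$, and a summand of a projective is projective. (Equivalently, apply Mackey to $\Res_H^G \Ind_H^G \mathbf{1}_H$ and take the summand for the trivial double coset.) With that inserted your argument is complete, and the closing averaging computation is essentially the same one the paper carries out on the coset $H \subseteq G$ inside $\Lambda[G]$, transported to $\Lambda[H]$.
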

\begin{proof}
The projectivity of $\Lambda[G/H]$ is equivalent to the splitting of the projection $\pi: \Lambda[G] \twoheadrightarrow \Lambda[G/H]$. 
To give a splitting of $\pi$ is the same thing as to lift the element $1 \cdot H \in \Lambda[G/H]$ to some $h \in \Lambda[G]$ invariant under the left action of $H$. Writing such $h$ in the natural basis labelled by elements of $G$, it must have constant coefficients along the left cosets of $H$.

If $|H| \notin \Lambda^\times$, such lift $h$ does not exist as $\varepsilon(h) \notin \Lambda^\times$ but $\varepsilon(1\cdot H) = 1$. On the other hand if $|H| \in \Lambda^\times$, the element $h := |H|^{-1} \cdot \sum_{g \in H} g$ gives such lift.
\end{proof}

\begin{setup}\label{setting: noncommutative coefficient rings}
We will use the following (possibly non-commutative) rings $A$ as coefficients. All modules are implicitly left modules.
\begin{itemize}
    \item[(i)] We denote by $A$ any torsion Artin algebra. In particular, this covers the case $A = \Lambda[G]$ with $\Lambda = F$ or its finite self-extensions.
    \item[(ii)] We further allow $A = \Lambda[G]$ for any $\Lambda$ from Setup \ref{setting: coefficient rings}. Such $A$ is an inverse limit of a flat system of torsion Artin algebras, or its subsequent flat base change.
\end{itemize}
\end{setup}

For a subclass $\mathcal{M}$ of objects of an abelian category $\mathcal{A}$, we write $\add(\mathcal{M})$ for the smallest additive idempotent-complete full subcategory of $\mathcal{A}$ containing $\mathcal{M}$.
In particular, starting from the class $\mathcal{M} = \{ \Lambda[G/H] \mid \ell \nmid |H| \}$ inside $\rep(G, \Lambda)$, we see that $\add (\mathcal{M}) = \add(\Lambda[G])$ consists precisely of finite projective $\Lambda[G]$-modules.
Indeed, since $\mathcal{M}$ contains only projectives by Lemma \ref{lemma: projectivity of permutation modules}, the same is true for $\add(\mathcal{M})$; the other direction is obvious as $\Lambda[G] \in \mathcal{M}$.
\section{The structure of \texorpdfstring{$T$}{T} and its characters} \label{structureofT}
\label{section: the structure of T and its characters}

In this section we discuss the characters of $T = L^\times$ with values in $\Lambda = \overline{\Q}_\ell$, $\overline{\F}_\ell$. Apart from their general structure, we consider the question of their integrality and discuss the reduction map $r_\ell$. We keep track of the Galois action on them.

\subsection{Characters of \texorpdfstring{$T$}{T}}

Let $T = L^\times$ be the locally profinite group of units in the degree $n$ unramified extension $L/K$ as in \S \ref{notation: non-archimedean fields}. We denote by $T_{\O} = T^0 = \O_L^\times$ the units in the ring of integers $\O_L$ of $L$; this is a profinite subgroup of $T$. 
Note that $L^\times$ can be written as the pushout of abelian groups
\begin{equation}\label{diagram:field pushout}
\begin{tikzcd}
K^\times \arrow[hookrightarrow]{r}   & L^\times \\
\O_K^\times \arrow[hookrightarrow]{r}  \arrow[hookrightarrow]{u}  & \O_L^\times \arrow[hookrightarrow]{u}
\end{tikzcd}
\end{equation}
Thus for $\Lambda = \overline{\Q}_\ell, \overline{\F}_\ell$ we can write
\begin{equation}\label{equation: characters of T}
\Irr(T, \Lambda) =  \Irr(K^\times, \Lambda) \times_{\Irr(\O_K^\times, \Lambda)} \Irr(\O_L^\times, \Lambda) = \Lambda^\times \times \Irr(T_{\O}, \Lambda),    
\end{equation}
the second equality given by fixing a uniformizer $\varpi_K$.
For $\Lambda = \overline{\Q}_\ell$, the question of integrality is answered as in Example \ref{example:T} -- a character is integral if and only if the image $\varpi_K$ lies in $\overline{\Z}_\ell^\times$.

\begin{lemma}\label{lemma: lifting characters}
The reduction map $r_\ell: \Irr(T, \overline{\Q}_\ell)^{\integral} \xrightarrow{} \Irr(T, \overline{\F}_\ell)$ is surjective.
\end{lemma}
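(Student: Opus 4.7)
The plan is to reduce the question to that of lifting characters of (a) the discrete group $\Z \cong K^\times/\O_K^\times$ generated by a uniformizer and (b) the profinite group $T_{\O} = \O_L^\times$, and then to invoke the Teichmüller section from \S\ref{section: coefficient rings} to produce the required integral lifts.

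More precisely, fixing the uniformizer $\varpi_K$ (which is also a uniformizer of $L$ since $L/K$ is unramified), equation \eqref{equation: characters of T} gives, for both $\Lambda = \overline{\Q}_\ell$ and $\Lambda = \overline{\F}_\ell$, a natural bijection
\begin{equation*}
\Irr(T, \Lambda) \;\cong\; \Lambda^\times \times \Irr(T_{\O}, \Lambda),
\end{equation*}
sending a character $\theta$ to the pair $(\theta(\varpi_K), \theta|_{T_{\O}})$. By Example \ref{example:T}, a character $\theta \in \Irr(T,\overline{\Q}_\ell)$ is integral if and only if the first coordinate $\theta(\varpi_K)$ lies in $\overline{\Z}_\ell^\times$. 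The reduction map $r_\ell$, once well-defined on characters by Discussion \ref{reductionmodulolitems}(ii), acts on such pairs componentwise via the canonical projection $\overline{\Z}_\ell^\times \twoheadrightarrow \overline{\F}_\ell^\times$.

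Given $\psi \in \Irr(T, \overline{\F}_\ell)$ with associated pair $(\psi(\varpi_K), \psi|_{T_{\O}})$, first I would lift $\psi(\varpi_K) \in \overline{\F}_\ell^\times$ via the Teichmüller section $\tau: \overline{\F}_\ell^\times \hookrightarrow \overline{\Z}_\ell^\times$ to obtain an element of $\overline{\Z}_\ell^\times$. Next, since $\psi|_{T_{\O}}$ is a smooth character of the profinite group $T_{\O}$, it factors through a finite discrete quotient; postcomposing with the multiplicative section $\tau$ yields a set-theoretic (hence, by multiplicativity of $\tau$, group-theoretic) lift $\widetilde{\psi|_{T_{\O}}}: T_{\O} \to \overline{\Z}_\ell^\times \subseteq \overline{\Q}_\ell^\times$ which is still smooth because it factors through the same finite quotient. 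Assembling these two coordinates back via \eqref{equation: characters of T} gives an integral lift $\widetilde{\psi} \in \Irr(T, \overline{\Q}_\ell)^{\integral}$.

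Finally, since the composition $\overline{\F}_\ell^\times \xrightarrow{\tau} \overline{\Z}_\ell^\times \twoheadrightarrow \overline{\F}_\ell^\times$ is the identity (this is the defining property of Teichmüller), a direct check on each coordinate shows $r_\ell(\widetilde{\psi}) = \psi$. There is no real obstacle here: the whole content of the statement is that integrality obstructs \emph{only} the first coordinate, while the second lives over a profinite group where reduction is already known to be surjective (Example \ref{finfib}); the Teichmüller section handles both coordinates at once.
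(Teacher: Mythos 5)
Your proof is correct and uses the same key idea as the paper: lifting by postcomposition with the Teichmüller section $\tau$, whose image lies in $\overline{\Z}_\ell^\times$ so that the lift is automatically integral. The paper applies $\tau$ to the whole character $\psi: T \to \overline{\F}_\ell^\times$ in one stroke, whereas you first split via $\Irr(T,\Lambda) \cong \Lambda^\times \times \Irr(T_{\O},\Lambda)$ and handle each coordinate separately — this makes the role of integrality (only in the uniformizer coordinate) more explicit, but is logically the same argument.
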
 
\begin{proof}
Postcomposition with the Teichmüller lift gives a section of $\Irr(T, \overline{\Q}_\ell) \xrightarrow{} \Irr(T, \overline{\F}_\ell)$. Since the image of the Teichmüller map lands in $\mu_{\infty}(\overline{\Q}_\ell) \subseteq \overline{\Z}_\ell^\times$, this section factors though $\Irr(T, \overline{\Q}_\ell)^{\integral}$.
\end{proof}

Now we return to the square \eqref{diagram:field pushout} to address the Galois action. The group $\Gal(L/K)$ acts on $\Irr(T, \Lambda)$ by precomposition with its defining action. 
Under the identification \eqref{equation: characters of T} this action decomposes into the trivial action (induced by precomposition on $K^\times$) times the natural action on $\O_L^\times$.
Altogether, there are two independent contributions to $\Irr(T, \Lambda)$ -- the ``profinite" contribution of $T_{\O}$ and the ``free" contribution of the uniformizer.

\subsection{Characters of the profinite \texorpdfstring{$T_{\O}$}{TO}}\label{Tsec -- prof}

We have the locally profinite group $T = L^{\times}$ with its  profinite subgroup $T_{\O}  = T^0 = \O_L^\times$. For $h \geq 1$ set $T^h := 1 + \mathfrak{p}_L^h$.
For any $0 \leq h \leq h'$, we obtain the finite group $T^{h}_{h'} := T^{h}/T^{h'}$. In particular, we get the truncations $T_h := T^0_h = \O^\times_L / T^{h}_L$.

The profinite group $\O_L^\times$ has a decreasing filtration 
$$\O_L^{\times} = T^0 \supset T^1 \supset \dots \supset T^h \supset \dots$$
by clopen subgroups which form a basis of neighbourhoods of $1$; this filtration is also $\Gal(L/K)$-stable. 
The truncation $T_h$ fits into a $\Gal(L/K)$-equivariant short exact sequence of abelian groups
\begin{equation}\label{ses}
1 \to T^1_h \to T_h \to T_1 \to 1.
\end{equation}
The cokernel $T_1$ is isomorphic to ${\F^{\times}_{q^n}}$. On the other hand, the kernel $T^1_h$ has size $q^{n(h-1)}$. So the short exact sequence \eqref{ses} is canonically split, giving the decomposition of $T_h$ into $p$-torsion part and $p$-torsion-free part. As $\ell \neq p$, the size of the kernel $T^1_h$ is coprime to $\ell$.

Applying $\Hom_{\Grp}(-, \Lambda^\times)$ to the split sequence \eqref{ses} gives a split short exact sequence of character groups with coefficients in $\Lambda$, naturally in $\Lambda$.
In particular, we apply this for $ \Lambda = \overline{\Q}_\ell, \overline{\Z}_\ell, \overline{\F}_\ell$. 
Comparing characters with values in $\Ql, \Fl$ via the natural maps from characters with values in $\overline{\Z}_\ell$ implies that this sequence is compatible with reduction modulo $\ell$ -- the following diagram is commutative and $\Gal(L/K)$-equivariant.
\begin{center}
\begin{tikzcd}
1  & \Irr (T^1_h, \overline{\Q}_\ell) \arrow{l} \arrow{d}{r_\ell} & \Irr(T_h, \overline{\Q}_\ell) \arrow{l} \arrow{d}{r_\ell} & \Irr(T_1, \overline{\Q}_\ell) \arrow{l}  \arrow{d}{r_\ell} & \arrow{l}  1 \\ 
1 & \Irr(T^1_h, \overline{\F}_\ell) \arrow{l}  & \Irr(T_h, \overline{\F}_\ell) \arrow{l} & \Irr(T_1, \overline{\F}_\ell) \arrow{l} & \arrow{l} 1 
\end{tikzcd}
\end{center}

Since the order of $T^1_h$ is coprime to $\ell$, the left-hand side reduction map is an isomorphism by Example \ref{remark: fibers of the reduction for abelian groups}. On the other hand, the order of $T_1$ need not be coprime to $\ell$. If we denote by $m$ its $\ell$-adic valuation, it also follows from Example \ref{remark: fibers of the reduction for abelian groups} that the right-hand map is a surjection with fibers of size $\ell^{m}$. We may record the following.

\begin{lemma}\label{Tlifts}
\label{lemma: counting lifting characters}
The reduction $r_\ell: \Irr(T_h, \overline{\Q}_\ell) \xrightarrow{} \Irr(T_h, \overline{\F}_\ell)$ is a $\Gal(L/K)$-equivariant surjection with fibers of size $\ell^m$.
\end{lemma}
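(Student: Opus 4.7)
The plan is to reduce the lemma to the two outer terms of the split short exact sequence \eqref{ses}. Since that sequence is split and $\Gal(L/K)$-equivariant, applying $\Hom_{\Grp}(-, \Lambda^\times)$ produces, naturally in $\Lambda$ and $\Gal(L/K)$-equivariantly, an isomorphism
\begin{equation*}
\Irr(T_h, \Lambda) \; \cong \; \Irr(T^1_h, \Lambda) \; \times \; \Irr(T_1, \Lambda).
\end{equation*}
Specializing to $\Lambda = \overline{\Q}_\ell$ and $\Lambda = \overline{\F}_\ell$, the commutative diagram displayed immediately before the lemma shows that, under this product decomposition, $r_\ell$ on $\Irr(T_h, -)$ factors as the product of $r_\ell$ on the two factors separately.

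Next I would analyze each factor via Example \ref{remark: fibers of the reduction for abelian groups}, which identifies the fibers of $r_\ell$ for a finite abelian group $G$ with $\Hom_{\Grp}(G, \mu_{\ell^\infty}(\overline{\Q}_\ell))$, i.e.\ with the $\ell$-power torsion subgroup of $G$. For $T^1_h$, which has order $q^{n(h-1)}$ coprime to $\ell$, the $\ell$-torsion is trivial, so $r_\ell \colon \Irr(T^1_h, \overline{\Q}_\ell) \to \Irr(T^1_h, \overline{\F}_\ell)$ is a bijection. For $T_1 \cong \F_{q^n}^\times$, which is cyclic of order $q^n-1$, the $\ell$-power torsion has order exactly $\ell^m$ with $m = v_\ell(q^n-1)$, so the fibers of $r_\ell \colon \Irr(T_1, \overline{\Q}_\ell) \to \Irr(T_1, \overline{\F}_\ell)$ have size $\ell^m$.

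Combining: the product of a bijection with a surjection having fibers of size $\ell^m$ is a surjection with fibers of size $1 \cdot \ell^m = \ell^m$. Galois-equivariance is automatic from the $\Gal(L/K)$-equivariance of the splitting of \eqref{ses} and hence of the product decomposition. Once the splitting of \eqref{ses} and Example \ref{remark: fibers of the reduction for abelian groups} are available, this is essentially a bookkeeping argument and I do not anticipate a genuine obstacle; the only point requiring mild care is to note that surjectivity on characters of $T_1$ can be realized by the Teichmüller section (as in Lemma \ref{lemma: lifting characters}), which lands in $\mu_\infty \subseteq \overline{\Z}_\ell^\times$ and therefore automatically produces integral, hence reducible, lifts.
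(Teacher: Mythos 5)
Your proof is correct and takes essentially the same route as the paper: the paper likewise applies $\Hom_{\Grp}(-,\Lambda^\times)$ to the split $\Gal(L/K)$-equivariant sequence \eqref{ses}, invokes Example~\ref{remark: fibers of the reduction for abelian groups} on each factor, and records the conclusion as the lemma. Your closing remark on Teichm\"uller lifts is a harmless elaboration of the surjectivity already contained in that example.
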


The smoothness of $\theta \in \Irr(T_{\O}, \Lambda)$ is equivalent to the existence of an integer $h \in \N$ such that $\theta$ factors over $T_h$. The smallest such $h$ is called the {\it level} of $\theta$ and denoted $\lvl(\theta)$. We will often confuse $\theta \in \Irr(T_{\O}, \Lambda)$ with such factorization in $\Irr(T_h, \Lambda)$ for some $h \geq \lvl(\theta)$.
The statement of Lemma \ref{lemma: counting lifting characters} carries word for word to a statement about smooth characters of the profinite group $T_{\O}$.

\begin{remark}\label{remark: integral lifts of characters}
Altogether, given a character $\psi \in \Irr(T, \overline{\F}_\ell)$, we can choose lifts $\theta_i \in \Irr(T, \overline{\Q}_\ell)^{\integral}$ for $i = 1, \dots, \ell^m$ of $\psi$ under $r_\ell$ whose restrictions to $T_{\O}$ are precisely the unique $\ell^m$ lifts of $\psi|_{T_{\O}}$. Indeed, one simply needs to lift the image of the uniformizer $\varpi \in {\O_K}$ from $\overline{\F}_\ell^{\times}$ to $\overline{\Z}_\ell^{\times}$ as discussed in the previous section -- the Teichmüller map provides a canonical choice.
\end{remark}

\subsection{General position of characters}\label{Tsec -- gen}
We now discuss the action of $\Gal(L/K)$ on $\Irr(T, \Lambda)$ by precomposition more closely. There is the following standard vocabulary.

\begin{definition}\label{generalposition}
\label{definition: general position}
A smooth character $\theta \in \Irr(T, \Lambda)$ lies in {\it general position} if it has trivial $\Gal(L/K)$-stabilizer.
We denote the set of characters in general position by $\Irr(T, \Lambda)^{\gen}$. When referring to the local Langlands correspondence, it is also denoted $\mathscr{X}(L, \Lambda)$.
\end{definition}

\begin{remark}
Since $T = L^\times$ is the pushout \eqref{diagram:field pushout}, the $\Gal(L/K)$-action on $T$ is uniquely determined by its restriction to $T_{\O}$.  
Therefore, a character $\theta \in \Irr(T, \Lambda)$ is in general position if and only if its restriction $\theta|_{T_{\O}}$ has trivial $\Gal(L/K)$-stabilizer.
\end{remark}

The discussion of the preceding section shows that for our purposes, $\theta|_{T^1}$ is the better behaved part of $\theta$. We consider the following stronger condition.
\begin{definition}\label{stronglygeneralposition}
\label{definition: strongly general position}
A smooth character $\theta \in \Irr(T, \Lambda)$ lies in {\it strongly general position} if $\theta |_{T^1}$ has trivial stabilizer in $\Gal(L/K)$.
We denote the subset of characters in strongly general position by $\Irr(T, \Lambda)^{\sg}$ or by $\mathscr{X}(L, \Lambda)^{\sg}$.
\end{definition}
One reason for this definition is the appearance of the same condition in \cite[Theorem A]{Iva19}, which we use crucially. Another reason is that it behaves well with respect to $r_\ell$.

\begin{lemma}\label{Tposition}
\label{lemma: strongly general position and reduction}
A smooth character $\theta \in \Irr(T, \overline{\Q}_\ell)^{\integral}$ lies in strongly general position if and only if its reduction $r_\ell(\theta) \in \Irr(T, \overline{\F}_\ell)$ lies in strongly general position.
\end{lemma}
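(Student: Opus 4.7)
The plan is to isolate the claim to the restriction $\theta|_{T^1}$, where the reduction map is a Galois-equivariant bijection on characters, and then deduce that stabilizers match on the nose.

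First, by Definition \ref{stronglygeneralposition}, the property of being in strongly general position depends only on the restriction $\theta|_{T^1}$. Since $\theta$ is smooth, it factors through $T_h$ for some $h \geq \lvl(\theta)$, and the restriction $\theta|_{T^1}$ then factors through $T^1_h = T^1/T^h$. Reduction $r_\ell$ is defined by postcomposition with $\overline{\Z}_\ell^\times \to \overline{\F}_\ell^\times$ (the character is integral, and on the profinite piece $T_\O$ its image automatically lies in $\mu_\infty(\overline{\Q}_\ell) \subseteq \overline{\Z}_\ell^\times$), so it commutes with restriction along $T^1 \hookrightarrow T$.

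Next, I would invoke the key observation from \S\ref{Tsec -- prof}: the group $T^1_h$ has order $q^{n(h-1)}$, which is coprime to $\ell$. Hence by Example \ref{finfib}, the reduction map
\begin{equation*}
r_\ell \colon \Irr(T^1_h, \overline{\Q}_\ell) \longrightarrow \Irr(T^1_h, \overline{\F}_\ell)
\end{equation*}
is a bijection; moreover it is $\Gal(L/K)$-equivariant, since reduction is induced by a map of coefficient groups which is tautologically Galois-invariant, while the Galois action on character groups is precomposition with the action on the source. Consequently, for any $\sigma \in \Gal(L/K)$, one has
\begin{equation*}
\sigma \cdot (\theta|_{T^1}) = \theta|_{T^1} \iff r_\ell\bigl(\sigma \cdot \theta|_{T^1}\bigr) = r_\ell(\theta|_{T^1}) \iff \sigma \cdot r_\ell(\theta)|_{T^1} = r_\ell(\theta)|_{T^1},
\end{equation*}
using that $r_\ell$ is injective on $\Irr(T^1_h, \overline{\Q}_\ell)$ and commutes with both restriction and the Galois action. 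Therefore the $\Gal(L/K)$-stabilizers of $\theta|_{T^1}$ and $r_\ell(\theta)|_{T^1}$ coincide, so one is trivial iff the other is, which is exactly the claim.

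There is no real obstacle here: the whole argument is an unpacking of the fact that on a group of order prime to $\ell$, reduction modulo $\ell$ is an isomorphism on character groups and is manifestly Galois-equivariant. The only thing to be careful about is that we must genuinely restrict to $T^1$ (rather than work with $\theta$ itself) before reducing — on $T_\O$ or $T$, the reduction map need not be injective and the argument would fail (this is precisely why the paper singles out strongly general position instead of merely general position).
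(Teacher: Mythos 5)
Your proof is correct and follows the same route as the paper: reduce the question to the restriction to $T^1_h$, note that $|T^1_h|$ is coprime to $\ell$ so that $r_\ell \colon \Irr(T^1_h, \overline{\Q}_\ell) \to \Irr(T^1_h, \overline{\F}_\ell)$ is a $\Gal(L/K)$-equivariant bijection, and conclude that the stabilizers agree. Your writeup is somewhat more detailed (explicitly checking that $r_\ell$ commutes with restriction, and spelling out the stabilizer comparison), but the mathematical content matches the paper's proof.
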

\begin{proof}
By smoothness, $\theta|_{T_{\O}}$ is a character of $T_h$ for some big enough $h$. Lying in strongly general position means that the restriction to $T^1_h$ has trivial $\Gal(L/K)$-stabilizer. But the reduction map $r_\ell: \Irr(T^1_h, \overline{\Q}_\ell) \xrightarrow{} \Irr(T^1_h, \overline{\F}_\ell)$ is a $\Gal(L/K)$-equivariant isomorphism, implying the result.
\end{proof}

\begin{remark}
Note that Lemma \ref{Tposition} would not be true if we replaced {\it strongly general position} by {\it general position} everywhere.
\end{remark}
\section{Local Langlands correspondence}
\label{section: local langlands correspondence}

In this section we recall certain facts about the local Langlands correspondence for $G = \mathbf{GL}_n(K)$ with coefficients $\Lambda = \overline{\Q}_\ell$, $\overline{\F}_\ell$. See \cite{Vig96, Vig00a, Vig00b} for the modular setting and \cite{BH06} for the characteristic zero setting.

\subsection{Notation and the statement}
Consider the coefficients $\Lambda = \overline{\Q}_\ell$, $\overline{\F}_\ell$. In both cases we use the following standard notation. 
Let $\mathscr{A}^0_n(K, \Lambda)$ be the set of isomorphism classes of irreducible supercuspidal representations of $\mathbf{GL}_n(K)$ over $\Lambda$.
On the other side, let $\mathscr{G}^0_n(K, \Lambda)$ be the set of isomorphism classes of irreducible $n$-dimensional representations of the Weil group $\mathcal{W}_K$ over $\Lambda$.

For us, the local Langlands correspondence is a unique bijection
$$\mathscr{G}^0_n(K, \Lambda) \xrightarrow{\LL}  \mathscr{A}^0_n(K, \Lambda),$$
satisfying certain naturality properties. Its unique existence is known for $\mathbf{GL}_n(K)$ for both $\Lambda = \overline{\Q}_\ell$ and $\overline{\F}_\ell$. When $\Lambda = \overline{\Q}_\ell$ we call this simply the {\it local Langlands correspondence}; for the case $\Lambda = \overline{\F}_\ell$ we use the name {\it modular local Langlands correspondence}.

\subsection{The modular local Langlands correspondence and reduction mod \texorpdfstring{$\ell$}{l}}
The modular Local Langlands correspondence for $\mathbf{GL}_n(K)$ has been deduced by reduction modulo $\ell$ from the zero characteristic case by Vignéras \cite{Vig96, Vig00a, Vig00b}. A brief summary can be also found in \cite{Se15}. 

In addition to the notation of the previous section, we write
$$
\mathscr{A}^0_n(K, \overline{\Q}_\ell) \supseteq \mathscr{A}^0_n(K, \overline{\Q}_\ell)^{\integral} \supseteq \mathscr{A}^0_n(K, \overline{\Q}_\ell)^{\ell\rm{-scu}}
$$
for the consecutive subsets of $\mathscr{A}^0_n(K, \overline{\Q}_\ell)$ given by integral representations resp. integral representations with supercuspidal reduction modulo $\ell$.

Analogously let 
$$
\mathscr{G}^0_n(K, \overline{\Q}_\ell) \supseteq \mathscr{G}^0_n(K, \overline{\Q}_\ell)^{\integral} \supseteq \mathscr{G}^0_n(K, \overline{\Q}_\ell)^{\ell\rm{-irr}}
$$
be the consecutive subsets of $\mathscr{G}^0_n(K, \overline{\Q}_\ell) $ given by integral representations  resp. integral representations with irreducible reduction modulo $\ell$.
 
\begin{theorem}[Vignéras]\label{Vig}
\label{theorem: vigneras}
The following diagram commutes
\begin{center}
\begin{tikzcd}
\mathscr{A}^0_n(K, \overline{\Q}_\ell) \arrow[hookleftarrow]{r}   & \mathscr{A}^0_n(K, \overline{\Q}_\ell)^{\ell\rm{-scu}}  \arrow[two heads]{r}{r_\ell} & \mathscr{A}^0_n(K, \overline{\F}_\ell)  \\

\mathscr{G}^0_n(K, \overline{\Q}_\ell) \arrow[hookleftarrow]{r} \arrow{u}{\LL}[swap]{\cong} & \mathscr{G}^0_n(K, \overline{\Q}_\ell)^{\ell\rm{-irr}}  \arrow[two heads]{r}{r_\ell} \arrow{u}{\LL}[swap]{\cong}  & \mathscr{G}^0_n(K, \overline{\F}_\ell)  \arrow{u}{\LL}[swap]{\cong}
\end{tikzcd}
\end{center}
\end{theorem}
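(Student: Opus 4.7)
My plan is to prove the theorem in three stages: first establish the horizontal reductions as well-defined surjections, then verify that the left vertical $\LL$ restricts to a bijection of the integral subloci, and finally deduce commutativity from the uniqueness characterisation of the modular LLC via $L$- and $\varepsilon$-factors of pairs.

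For the horizontal arrows, on the Galois side any smooth $n$-dimensional representation of $\W_K$ has finite image on the inertia, so after restriction it takes values in a compact-by-cyclic subgroup of $\GL_n(\overline{\Q}_\ell)$; rescaling the Frobenius lifts the image into $\GL_n(\overline{\Z}_\ell)$, and $\mathscr{G}^0_n(K, \overline{\Q}_\ell)^{\ell\text{-irr}}$ cuts out exactly those whose reduction modulo $\overline{\mathfrak{m}}$ remains irreducible. Surjectivity then follows from a Fong--Swan style lifting applied to the finite-by-cyclic image: the wild part has pro-order a power of $p \neq \ell$, so cohomological obstructions vanish and any irreducible mod $\ell$ representation of the image admits a char $0$ lift. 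On the automorphic side, $\mathscr{A}^0_n(K, \overline{\Q}_\ell)^{\ell\text{-scu}}$ is by definition the locus where the reduction map of Discussion \ref{reductionmodulolitems}(iii) lands inside the supercuspidal stratum, and I would take surjectivity from Vignéras's type-theoretic lifting theorem, which shows that every modular irreducible supercuspidal arises as the reduction of some integral char $0$ supercuspidal whose reduction remains supercuspidal.

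For the left vertical bijection, I would use that $\LL$ over $\overline{\Q}_\ell$ preserves central characters (and determinants on the Galois side), so integrality transports between the two sides. To match the $\ell$-scu locus with the $\ell$-irr locus, I would invoke Henniart's characterisation of $\LL$ by preservation of $L$- and $\varepsilon$-factors of pairs: a reducible Galois reduction would force a nontrivial factorisation of $L(s, \sigma \otimes \sigma')$ modulo $\ell$ for some auxiliary $\sigma'$, which transferred to the automorphic side contradicts supercuspidality of the reduction; the converse is symmetric.

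The heart of the argument is the commutativity of the right square. Define a candidate map $\mathscr{G}^0_n(K, \overline{\F}_\ell) \to \mathscr{A}^0_n(K, \overline{\F}_\ell)$ by choosing a lift along the (surjective) right Galois $r_\ell$, applying char $0$ $\LL$, and reducing; independence of the lift follows since $L$- and $\varepsilon$-factors of integral pairs are Laurent polynomials in $q^{-s}$ with $\overline{\Z}_\ell$-coefficients whose reductions modulo $\overline{\mathfrak{m}}$ coincide with the corresponding modular factors. Since characteristic zero $\LL$ preserves these factors, so does the candidate after reduction, and Vignéras's axiomatic uniqueness of $\LL_{\overline{\F}_\ell}$ then identifies the candidate with the modular correspondence, yielding commutativity. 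The main obstacle is the surjectivity of $r_\ell$ on the automorphic side: unlike the Galois case, there is no elementary lifting, and one must analyse Bushnell--Kutzko cuspidal types in the modular setting and verify that their char $0$ lifts admit supercuspidal reductions -- this is precisely the deep technical input of \cite{Vig96, Vig00a, Vig00b}.
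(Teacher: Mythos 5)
The paper does not actually prove this theorem: its ``proof'' is a two-line citation of Sécherre's survey \cite[Theorem~4.6]{Se15} and the original Vignéras papers \cite{Vig00a, Vig00b}, since the result is due entirely to Vignéras and is stated here only to be quoted later. Your proposal, by contrast, tries to reconstruct the internal argument of those references. That is a legitimate thing to do and your sketch is in the right spirit; but it necessarily goes well beyond what the paper itself supplies, so the two cannot be compared as two proofs of the same scope.

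On the merits of the sketch itself: your Galois-side discussion is essentially right -- an irreducible smooth $\mathscr{G}^0_n$ has finite image on inertia and unramified-twisting controls integrality, and since the wild part is pro-$p$ with $p\neq\ell$ the lifting problem for the finite image is unobstructed (this is indeed a Fong--Swan-type argument). You are also right to flag that surjectivity of $r_\ell$ on the automorphic side is the deep input, requiring the analysis of mod $\ell$ cuspidal types; nothing cheap replaces it, and the paper would simply defer to \cite{Vig96, Vig00a, Vig00b} for that as you do. The one place where your outline is thinner than the literature is the commutativity step. You propose to characterise the candidate reduced correspondence by mod~$\ell$ $L$- and $\varepsilon$-factors of pairs, treating their integrality and reduction as routine. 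In fact the behaviour of $L$- and $\varepsilon$-factors of pairs under $r_\ell$ is one of the delicate points in \cite{Vig00b}: the naive reductions of local factors can degenerate, and Vignéras has to normalise carefully (and in places argue differently) to obtain a genuine uniqueness statement for $\LL_{\overline{\F}_\ell}$. So the phrase ``whose reductions modulo $\overline{\mathfrak{m}}$ coincide with the corresponding modular factors'' hides a real lemma, not a formality. Your architecture is correct, but if you wanted to make this into a self-contained proof you would need to import or reprove exactly that compatibility of local factors with reduction, which is comparable in difficulty to the surjectivity you already acknowledge deferring. For the purposes of the paper, the citation is the appropriate proof; your sketch is a faithful, if slightly optimistic, summary of what lies behind it.
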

\begin{proof}
Up to reformulation, this is \cite[Theorem 4.6]{Se15}. For details, see \cite{Vig00a, Vig00b}. 
\end{proof}

To phrase this in words: the local Langlands correspondence preserves integral representations and identifies $\ell$-supercuspidal representations of $G$ with $\ell$-irreducible representations of $\mathcal{W}_K$. After the reduction $r_\ell$, it descends to a bijection realizing the modular local Langlands correspondence.

\subsection{Weil induction of characters}
\label{section: weil induction of characters}
\label{section: artin reciprocity identification}

We need the following partial parametrization of $\mathscr{G}^0_n(K, \Lambda)$. Let $L$ be the degree $n$ unramified extension of $K$ and $\mathscr{X}(L, \Lambda) \subseteq \Irr(T, \Lambda)$ the subset of characters in general position.
There is a map
\begin{equation}\label{definition: weil induction of characters}
\begin{gathered}
\sigma: \mathscr{X}(L, \Lambda)/\Gal(L/K) \to \mathscr{G}^0_n(K, \Lambda) \\   
\theta \mapsto \sigma(\theta) := \cInd_{\W_L}^{\W_K}\left(\W_L \to \W^{\ab}_L \xrightarrow{\rec_L} L^\times \xrightarrow{\mu \cdot \theta} \Lambda\right). 
\end{gathered}
\end{equation}
Here, the {\it rectifying character} $\mu$ is determined by sending a uniformizer $\varpi$ to $(-1)^{n-1}$ and being trivial on the compact part under the identification $\Irr(T, \Lambda) = \Lambda^\times \times \Irr(T_{\O}, \Lambda)$. The map $\rec_L$ is the Artin reciprocity map of class field theory. 

The following seems well-known, but we lack a reference in the modular case. For completeness, we sketch a proof.
\begin{proposition}\label{Wirreducible}
\label{proposition: weil induction}
The map \eqref{definition: weil induction of characters} is a well-defined injection.
\end{proposition}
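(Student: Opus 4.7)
The plan is to verify three things: (a) that $\sigma(\theta)$ is actually $n$-dimensional and irreducible when $\theta \in \mathscr{X}(L, \Lambda)$, (b) that $\sigma$ factors through the $\Gal(L/K)$-orbit of $\theta$, and (c) that $\sigma$ is injective. The key tool throughout will be Mackey theory applied to $\W_L \subseteq \W_K$, which is a normal subgroup of index $n$ (since $L/K$ is unramified Galois of degree $n$) and is clopen by Remark \ref{remark: clopeness}, so that induction, restriction, and Mackey's formula behave exactly as in the finite case.

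For (a), compact induction of a character from a clopen subgroup of index $n$ is automatically $n$-dimensional. Setting $\chi := (\mu \cdot \theta) \circ \rec_L$, the Mackey formula for the normal inclusion $\W_L \subseteq \W_K$ reads
\begin{equation*}
\Res^{\W_K}_{\W_L} \cInd_{\W_L}^{\W_K} (\chi) \cong \bigoplus_{g \in \Gal(L/K)} \chi^{\widetilde{g}},
\end{equation*}
where $\widetilde{g} \in \W_K$ is a lift of $g$. Frobenius reciprocity then computes $\End_{\W_K}(\cInd \chi)$ as a sum of $\Hom$-spaces between $\chi$ and its conjugates $\chi^{\widetilde{g}}$, which is one-dimensional precisely when all non-trivial $\chi^{\widetilde{g}}$ differ from $\chi$. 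By $\Gal(L/K)$-equivariance of $\rec_L$ and Galois-invariance of the rectifier $\mu$ (trivial on $\O_L^\times$, and sending the uniformizer $\varpi_K$ of $K$ — which is also a uniformizer of $L$ by unramifiedness — to a Galois-fixed sign), this condition translates into $\theta$ lying in general position. Irreducibility then upgrades from the endomorphism computation: since $\Res \cInd \chi$ is a sum of pairwise distinct characters, any $\W_K$-subrepresentation of $\cInd \chi$ must restrict to a $\W_K/\W_L$-stable subset of the transitive orbit $\{\chi^{\widetilde{g}}\}_g$, which forces it to be either zero or everything.

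For (b), one has $\sigma(\theta^g) = \cInd_{\W_L}^{\W_K}(\chi^{\widetilde{g}})$ by equivariance of $\rec_L$ and invariance of $\mu$, and this representation is naturally isomorphic to $\cInd_{\W_L}^{\W_K}(\chi) = \sigma(\theta)$ via left translation by $\widetilde{g}$. For (c), suppose $\sigma(\theta_1) \cong \sigma(\theta_2)$. Restricting both sides to $\W_L$ and applying the Mackey decomposition above yields an equality of multisets $\{(\mu \theta_1)^g \circ \rec_L\}_g = \{(\mu \theta_2)^g \circ \rec_L\}_g$ of smooth characters of $\W_L$. Cancelling the Galois-invariant $\mu$ and inverting the isomorphism $\rec_L$ then places $\theta_1$ and $\theta_2$ in the same $\Gal(L/K)$-orbit.

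The main obstacle is cosmetic rather than substantial: one must ensure that these classical Mackey-theoretic manipulations go through for continuous representations of the locally profinite group $\W_K$ over $\Lambda = \overline{\F}_\ell$ as well. This is not an issue because $\W_L \subseteq \W_K$ is clopen of finite index, so induction is exact and two-sided adjoint to restriction, and the derivation of Mackey's formula in this normal finite-index setting is formal and independent of the characteristic of $\Lambda$.
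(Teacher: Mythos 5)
Your proof is correct and follows essentially the same route as the paper: the Mackey decomposition of $\Res^{\W_K}_{\W_L}\cInd^{\W_K}_{\W_L}$ over the normal finite-index subgroup $\W_L$, pairwise distinctness of the conjugate characters via general position and $\Gal(L/K)$-equivariance of $\rec_L$, a submodule argument for irreducibility, and restriction back to $\W_L$ for injectivity on Galois orbits. One small caveat on wording: a one-dimensional endomorphism ring alone only gives indecomposability, not irreducibility, over $\overline{\F}_\ell$ — but the submodule/orbit argument you then give (which matches the paper's Krull--Schmidt argument) is what actually does the work, so there is no gap.
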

\begin{proof}
Since $\mu$ is stabilized by the $\Gal(L/K)$-action, the adjustment $\theta \mapsto \mu \cdot \theta$ only permutes $\mathscr{X}(L, \Lambda)$ resp. $\mathscr{X}(L, \Lambda)/\Gal(L/K)$, and thus is irrelevant for the statement -- hence we ignore it.

Given a character $\theta \in \Irr(L^\times, \Lambda)$, denote $V$ the corresponding one-dimensional representation of $\W_L$ obtained via the reciprocity isomorphism of class field theory.

We have a short exact sequence $1 \to \W_L \to \W_K \to \Gal(L/K) \to 1$. In particular, $\W_L$ is normal in $\W_K$. For $g \in \Gal(L/K)$, we denote $^gV$ the twisted representation obtained from $V$ by precomposing with the conjugation $\W_L \to \W_L$, $w \mapsto \tilde{g}w\tilde{g}^{-1}$ by any lift $\tilde{g} \in \W_K$ of $g$; the isomorphism class of the resulting representation is independent of the choice of $\tilde{g}$. Therefore, $\Gal(L/K)$ acts on the character group of $\W_L$.

The Galois group $\Gal(L/K)$ also acts on the character group $\Irr(L^\times, \Lambda)$ via precomposition with its defining action on $L$. 
It is a standard result of local class field theory that the identification of the character groups via the reciprocity map is $\Gal(L/K)$-equivariant with respect to the above described actions, the relevant fact being that the reciprocity map comes from a morphism of modulations \cite[Definition 1.5.10]{Neu}.

Using the normality of $\W_L$ in $\W_K$, we obtain a Mackey decomposition
$$\Res^{\W_K}_{\W_L}\cInd^{\W_K}_{\W_L} V \cong \bigoplus_{g \in \Gal(L/K)} {^gV}.$$
Note this has dimension $n = |\Gal(L/K)|$. The terms $^gV$ on the right-hand side are one-dimensional, so in particular irreducible -- it follows that this $\W_L$-representation is semisimple. 

From now on, assume $\theta$ is in general position as in the statement. By the above discussion, the terms $^gV$ on the right hand-side are pairwise non-isomorphic. 

Consider any nonzero subrepresentation $U \subseteq \cInd^{\W_K}_{\W_L} V$. By the Krull--Schmidt theorem, $\Res^{\W_K}_{\W_L}U$ contains a $\W_L$-representation isomorphic to $^gV$ for some $g \in \Gal(L/K)$. For any $h \in \Gal(L/K)$, the translate of this by a lift of $hg^{-1}$ gives a representation of $\W_L$ isomorphic to $^hV$. Using Krull--Schmidt, this forces $\dim_{\Lambda} U \geq n$, showing $U = \cInd^{\W_K}_{\W_L} V$. Altogether, $\cInd^{\W_K}_{\W_L} V$ is irreducible.

We thus have a map $\mathscr{X}(L, \Lambda) \to \mathscr{G}^0_n(K, \Lambda)$ and it remains to see that it identifies precisely the Galois orbits. This is now immediate -- given two characters $\theta, \theta' \in \mathscr{X}(L, \Lambda)$ with corresponding $\W_L$-representations $V, V'$, Frobenius reciprocity and the Mackey formula above yield 
$$\Hom_{\W_K}\left( \cInd^{\W_K}_{\W_L} V, \cInd^{\W_K}_{\W_L} V' \right) \cong \Hom_{\W_L} \left( V,  \Res^{\W_K}_{\W_L}\cInd^{\W_K}_{\W_L} V' \right).$$ 
By the already proven irreducibility, the left-hand side is nonzero if and only if the inductions are isomorphic, whereas the right-hand side is nonzero if and only if $V$, $V'$ (hence the original characters $\theta$, $\theta'$) lie in the same $\Gal(L/K)$-orbit.
\end{proof}

We will also need a compatibility of the map \eqref{definition: weil induction of characters} for $\Lambda = \Ql$, $\Fl$ with the reduction $r_{\ell}$.

\begin{lemma}\label{weylind}
\label{lemma: weil induction and reduction}
The following well-defined diagram commutes
\begin{center}
\begin{tikzcd}
\mathscr{X}(L, \overline{\Q}_\ell)^{\sg, \integral} \arrow{d}{} \arrow{r}{r_\ell} &  \mathscr{X}(L, \overline{\F}_\ell)^{\sg} \arrow{d}{} \\
\mathscr{G}^0_n(K, \overline{\Q}_\ell)^{\integral} \arrow{r}{r_\ell} &  \mathscr{G}^0_n(K, \overline{\F}_\ell) 
\end{tikzcd}
\end{center}
\end{lemma}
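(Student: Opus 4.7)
The strategy is to reduce the lemma to the compatibility of compact induction with reduction modulo $\ell$ from Corollary \ref{corollary: cind and rl square}, applied to the clopen inclusion $\W_L \subseteq \W_K$ of finite index $n$. First I would verify that every corner of the diagram makes sense. The top arrow restricts to strongly general characters on both sides by Lemma \ref{lemma: strongly general position and reduction}. For any $\theta \in \mathscr{X}(L, \overline{\Q}_\ell)^{\sg, \integral}$, the one-dimensional character $\mu \cdot \theta \circ \rec_L \colon \W_L \to \overline{\Q}_\ell^\times$ is integral: the rectifier $\mu$ takes values in $\{\pm 1\} \subseteq \overline{\Z}_\ell^\times$, $\theta$ is integral by hypothesis, and $\rec_L$ is a coefficient-free group isomorphism. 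By Lemma \ref{lemma:induction and integrality} its compact induction $\sigma(\theta) = \cInd^{\W_K}_{\W_L}(\mu \cdot \theta \circ \rec_L)$ is integral, and by Proposition \ref{proposition: weil induction} it is irreducible, so the left vertical arrow lands in $\mathscr{G}^0_n(K, \overline{\Q}_\ell)^{\integral}$. The right vertical arrow is Proposition \ref{proposition: weil induction} over $\overline{\F}_\ell$; the well-definedness of the bottom $r_\ell$ on the image of $\sigma$ will come out of the commutativity argument itself.

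Next, I would fix $\theta \in \mathscr{X}(L, \overline{\Q}_\ell)^{\sg, \integral}$. The image of $\sigma$ consists of $n$-dimensional, hence finite length, representations, so Corollary \ref{corollary: cind and rl square} applied to the Serre subcategories of finite-dimensional representations of $\W_L$ and $\W_K$ (both of which carry a well-defined $r_\ell$ by Discussion \ref{reductionmodulolitems}(ii)) yields
\[
r_\ell\bigl(\sigma(\theta)\bigr) \;=\; r_\ell\bigl(\cInd^{\W_K}_{\W_L}(\mu \cdot \theta \circ \rec_L)\bigr) \;=\; \cInd^{\W_K}_{\W_L}\bigl(r_\ell(\mu \cdot \theta \circ \rec_L)\bigr).
\]
The reduction of the one-dimensional integral character $\mu \cdot \theta \circ \rec_L$ amounts to postcomposing its values with $\overline{\Z}_\ell \twoheadrightarrow \overline{\F}_\ell$; since $\mu$ and $\rec_L$ are defined without reference to the coefficients, this yields $\mu \cdot r_\ell(\theta) \circ \rec_L$. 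Therefore $r_\ell(\sigma(\theta)) = \sigma(r_\ell(\theta))$, which is exactly the desired commutativity; moreover, $\sigma(r_\ell(\theta))$ is irreducible by Proposition \ref{proposition: weil induction} applied to $r_\ell(\theta) \in \mathscr{X}(L, \overline{\F}_\ell)^{\sg}$, confirming that the bottom $r_\ell$ is well-defined on the image of $\sigma$.

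The one point requiring care is the applicability of Corollary \ref{corollary: cind and rl square}: one must exhibit Serre subcategories on each side containing the representations in question and on which both reduction maps are defined. This is routine given that the source is one-dimensional and the target $n$-dimensional, both of finite length, so Discussion \ref{reductionmodulolitems}(ii) supplies a well-defined $r_\ell$ on the associated Grothendieck groups and the lemma reduces to a strictly formal diagram chase.
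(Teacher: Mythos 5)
Your proof is correct and follows essentially the same route as the paper: reduce to the compatibility of $\cInd^{\W_K}_{\W_L}$ with $r_\ell$ (Lemma~\ref{lemma: cind and rl} / Corollary~\ref{corollary: cind and rl square}), justify the well-definedness via finite-dimensionality and Discussion~\ref{reductionmodulolitems}(ii), and invoke Proposition~\ref{proposition: weil induction} to see that both vertical arrows land in irreducibles. The only stylistic difference is that you invoke the corollary where the paper cites the lemma directly; the substance is identical.
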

\begin{proof}
Take $\theta \in \mathscr{X}(L, \overline{\Q}_\ell)^{\sg, \: \integral}$, and $\psi \in \mathscr{X}(L, \overline{\Q}_\ell)^{\sg}$ be its image under $r_\ell$. We have already seen that all of this makes sense -- the smooth character $\theta$ has an integral structure by design; $r_\ell$ preserves its strongly general position by Lemma \ref{Tposition}. The adjustment by $\mu$ does not affect integrality; the formation of the inflated representation to $\W_L$ commutes with $r_\ell$. Compact induction carries forward the integral structure and commutes with $r_\ell$ by Lemma \ref{lemma: cind and rl}, whose assumptions are satisfied by Discussion \ref{reductionmodulolitems} (ii) -- indeed, $|\W_K / \W_L| = n$ is finite, so all representations in question are finite-dimensional. The diagram thus commutes. 

Finally, we should point out that the vertical maps land in irreducible representations of $\W_K$ by Proposition \ref{Wirreducible}.
\end{proof}
\section{Group actions on étale cohomology}
\label{section: equivariant etale cohomology}

For an algebraic variety $X$, we have its compactly supported étale cohomology groups $H^\bullet_c(X, \Lambda)$; these are the cohomology groups of the complex $\RG_c(X, \Lambda) \in \D^b(\Lambda)$.
When $X$ carries an action of a finite group $G$, we get an induced action on its étale cohomology: $\RG_c(X, \Lambda) \in \D^b(G, \Lambda)$.
 
Rickard \cite{Ric94} noticed that this complex has a canonical representative $\GG_c(X, \Lambda)$ in the homotopy category $\K^b(G, \Lambda)$ of a specific shape. Many constructions on $\RG_c(X, \Lambda)$ -- which are necessarily derived in nature -- can be performed directly on this representative. This makes $\GG_c(X, \Lambda)$ an important tool for understanding the behaviour of representations coming from cohomology under the change of coefficients.
We briefly review the properties of Rickard's complexes \cite{Ric94}. The relevance for classical modular Deligne--Lusztig theory is considered in \cite{BR02, BDR16, Dud17}.

\subsection{Conventions}\label{conventions: sheaves on schemes}

Let $X$ be a separated scheme of finite type over a separably closed field $k$. Let $\Lambda$ be as in Setup \ref{setting: coefficient rings}. All cohomology groups are étale cohomology groups with compact support or their $\ell$-adic version \cite{SGA4, Mil}. We denote them $H^\bullet_c(X, \Lambda)$.

Given a coefficient ring $\A$ as in Setup \ref{setting: noncommutative coefficient rings}-(i), we denote $\Sh(X, \A)$ the category of constructible étale sheaves of $\A$-modules on $X$. We denote its bounded homotopy category by $\K^b(X, \A)$ and its bounded derived category by $\D^b(X, \A)$. By $\RG_c(X, -): \D^b(X, \A) \to \D^b(\A)$, we denote the derived functor of compactly supported global sections. 
For a coefficient ring $A$ as in Setup \ref{setting: coefficient rings}-(ii), we use $\Sh(X, A)$ for the corresponding category of $\ell$-adic sheaves.

If $\mathcal{M} \subseteq \Mo(\A)$ is an idempotent complete abelian subcategory, we denote $\Sh(X, \mathcal{M}) \subseteq \Sh(X, \A)$ the full subcategory of sheaves with stalks in $\mathcal{M}$. We further denote $\K^b(X, \mathcal{M})$ and $\D^b(X, \mathcal{M})$ the subcategories of objects which can be represented by a complex with entries in $\Sh(X, \mathcal{M})$.

We write $\iota: \K^b(-) \to \D^b(-)$ for the canonical localization functor.

\subsection{Rickard's complex} \label{subsubsection: rickards complex} \label{section: equivariant cohomology via rickards complex}
In order to effectively manipulate $\RG_c(X, \Lambda)$, we need to work with explicit representatives; in order to perform certain operation on $\RG_c(X, \Lambda)$, such a representative has to be sufficiently acyclic. Rickard \cite{Ric94} constructed a useful canonical representative $\KG_c(X, \Lambda)$ for $\RG_c(X, \Lambda)$ on the level of the homotopy category. See also \cite{Dud17}.
 
Let $X$ be a separated scheme of finite type over a field $k$ and $\A$ a coefficient ring as in Setup \ref{setting: noncommutative coefficient rings}. 
Rickard's construction gives a functor
\begin{equation*}
 \KG_c(X, -): \Sh(X, \A) \to \K^b(\A)  
\end{equation*}
with the following properties.
\begin{properties}[Rickard's complex]\label{properties: rickards complex}
\ 
\begin{enumerate}
\item \label{property: representing cohomology} Rickard's functor $\KG_c(X, -)$ is additive and gives a functorial representative for $\RG_c(X, -)$ on the level of $\K^b(A)$. In other words, $\RG_c(X, -) \cong \iota(\KG_c(X, -))$.

\item For any $\mathscr{F} \in \Sh(X, \A)$, the complex $\KG_c(X, \mathscr{F})$ has finite type terms and is concentrated in degrees $0, \dots, 2\dim X$. More precisely, it is concentrated in the interval of degrees where $H^i_c(X, \mathscr{F}) \neq 0$.

\item \label{property: stalks and additive subcategories} If all stalks of $\mathscr{F} \in \Sh(X, \A)$ lie in $\add(M)$ for some $M \in \mo(A)$, then also  $\KG_c(X, \mathscr{F})$ lands in $\K^b(\add(M))$. Put in different words, if $\mathcal{M}$ is an additive idempotent complete subcategory of $\mo(\A)$, then $\KG_c(X, -)$ restrict to a functor $\Sh(X, \mathcal{M}) \to \K^b(\mathcal{M})$.

\item \label{property: functoriality in the first variable} Rickard's complex $\KG_c(-, -)$ is functorial in the first variable in the following sense: a finite morphism of pairs $(X, \mathscr{F}) \to (Y, \mathscr{G})$ induces a map $\KG_c(Y, \mathscr{G}) \to \KG_c(X, \mathscr{F})$.

\item \label{property: compatibility with additive functors} If $F: \mo(\A) \to \mo(\B)$ is an additive functor, we have a canonical isomorphism
$$F(\KG_c(X, \mathscr{F})) = \KG_c(X, F^{\sh}(\mathscr{F}))$$ 
inside $\K^b(\B)$. Here, $F^{\sh}(\mathscr{F})$ stands for the sheafification of the presheaf given by applying $F$ sectionwise.
\end{enumerate}
\end{properties}
\begin{proof}
See \cite[Theorem 2.7 and Lemma 2.8]{Ric94} for coefficients from Setup \ref{setting: noncommutative coefficient rings}-(i). The extension to coefficients from Setup \ref{setting: noncommutative coefficient rings}-(ii) is done as in \cite[Theorem 3.5]{Ric94} and further base change.
\end{proof}

\begin{remark}
Property \eqref{property: compatibility with additive functors} is quite strong.
For instance -- together with property \eqref{property: representing cohomology} -- it reveals that $\RG_c(X, F^{\sh}(\mathscr{F}))$ may be computed by naively applying $F$ to the representing complex $\KG_c(X, \mathscr{F})$.
Also, \eqref{property: compatibility with additive functors} shows that the construction of $\KG_c(X, -)$ is compatible with forgetful functors between module categories. Consequently, we can dismiss these forgetful maps from the notation. 
\end{remark}

\begin{remark}
Without any difficulty, one can extend $\KG_c(X, -)$ to a functor $\K^b(X, \A) \to \K^b(\A)$.
\end{remark}

\subsection{Finite group actions}
\label{subsection: finite group actions}
Let $X$ be a separated scheme of finite type over a separably closed field $k$.
Consider an action of a finite group $G$ on $X$. 

\begin{notation}[Stack quotients]
We denote $\langle X \slash G \rangle$ the stack quotient of $X$ by $G$. 
The conventions from \S\ref{conventions: sheaves on schemes} extend to algebraic stacks. 
\end{notation}

\begin{notation}[Coarse quotients]
We denote $X \slash G$ the quotient of $X$ by $G$ in the sense of algebraic spaces. This gives a course moduli space for $\langle X \slash G \rangle$ by \cite{KM97, Ryd13}.

When $X$ is a quasi-projective scheme over $k$, so is $X \slash G$. It can be constructed as follows: find a $G$-invariant affine covering of $X$ (which exists by quasi-projectivity), take invariants $(-)^G$ on functions and then glue back \cite[Appendix A]{Mus11}.
This is the approach taken in \cite{Ric94, DL76}; it is sufficient for this paper.
\end{notation}

A {\it $G$-equivariant sheaf} is an object $\eF \in \Sh(X, \Lambda)$ equipped with isomorphisms $\alpha_g: \mathscr{F} \to (g^{-1})^*\mathscr{F}$ for each $g \in G$ satisfying the cocycle condition. More conceptually, this amounts to an object $\eF \in \Sh(\langle X / G \rangle, \Lambda)$.
In particular, $\Sh(\langle \pt / G \rangle, \Lambda)$ is the category of finite type $G$-representations; similarly for $\D^b(\langle \pt / G \rangle, \Lambda)$.

The cohomology of an equivariant sheaf carries a natural $G$-action: denoting $f: \langle X / G \rangle \to \langle \pt / G \rangle$ the canonical map, base change allows to identify
\begin{equation*}
 \RG_c(X, \eF) = Rf_{!}\eF \in \D^b(G, \Lambda).    
\end{equation*}
Taking the constant sheaf $\Lambda$ with the trivial $G$-equivariant structure, we get
$\RG_c(X, \Lambda) \in \D^b(G, \Lambda)$.

Here is an alternative way of computing this complex via the coarse quotient. Let $\pi: X \to X / G$ be the quotient map. Given a $G$-equivariant sheaf $\eF$ in $\Sh(X, \Lambda)$, the pushforward $\pi_* \eF$ naturally lives in $\Sh(X/G, \Lambda[G])$.
Since $\pi$ is finite, $R\pi_* = \pi_* = \pi_{!} = R\pi_{!}$ has no higher cohomology. Given any further map $h: X/G \to S$ to some scheme $S$, we deduce
\begin{equation*}
 R(h \circ \pi)_! = Rh_! R\pi_!   
\end{equation*}
In particular,
\begin{equation}\label{equation: pushforward to quotient}
\RG_c(X, \eF) = \RG_c(X/G, \pi_* \eF) \in \D^b(G, \Lambda).    
\end{equation}

\begin{notation}
Given a point $x \in X(k)$, we write $C_G(x)$ for its stabilizer under the $G$-action. Denoting $y= \pi(x)$ its image under the quotient map $\pi: X \to X / G$, the stalk of the pushforward of $\Lambda$ identifies as $(\pi_*\Lambda)_y \cong \Lambda[G/C_G(x)]$.
\end{notation}

\begin{definition}[Rickard's complex of a group action]
For a finite group $G$ acting on $X$ and a $G$-equivariant sheaf $\eF$, denote
\begin{equation*}
    \GG_c(X, \eF) := \KG_c(X \slash G, \pi_* \eF) \in \K^b(G, \Lambda)
\end{equation*}
By \S \ref{subsubsection: rickards complex}, this is a complex of finite type $\Lambda[G]$-modules representing $\RG_c(X, \eF) \in \D^b(G, \Lambda)$.
\end{definition}

\begin{lemma}\label{corollary: rickards complex for finite group actions}
For a finite group $G$ acting on $X$, the complex
$\GG_c(X, \Lambda) \in \K^b(G, \Lambda)$
has terms in $\mathcal{M} = \add \{\Lambda[G/C_G(x)] \mid x \in X(k)\}$. 
\end{lemma}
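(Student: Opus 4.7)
The plan is to recognize this as a direct application of Rickard's Property~\eqref{property: stalks and additive subcategories}, combined with the stalk computation for $\pi_*\Lambda$ already recorded just before the definition. By construction $\GG_c(X, \Lambda) = \KG_c(X/G, \pi_*\Lambda)$, so the task reduces to showing that the constructible $\Lambda[G]$-sheaf $\pi_*\Lambda$ on the coarse quotient $X/G$ has all its stalks lying in the additive idempotent-complete subcategory $\mathcal{M} \subseteq \mo(\Lambda[G])$.

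First I would fix a geometric point $y \in (X/G)(k)$ and pick a preimage $x \in X(k)$ under $\pi$; such a preimage exists since $k$ is separably closed and $\pi$ is a finite surjection. The stalk identification $(\pi_*\Lambda)_y \cong \Lambda[G/C_G(x)]$ recorded in the excerpt immediately shows $(\pi_*\Lambda)_y \in \mathcal{M}$. The choice of preimage is harmless: any two preimages of $y$ lie in the same $G$-orbit, so their stabilizers are conjugate, and $\Lambda[G/H]$ and $\Lambda[G/gHg^{-1}]$ are canonically isomorphic as $\Lambda[G]$-modules via $aH \mapsto ag^{-1}(gHg^{-1})$. Hence $\pi_*\Lambda \in \Sh(X/G, \mathcal{M})$.

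With this established, the restatement of Property~\eqref{property: stalks and additive subcategories} for additive idempotent-complete subcategories (spelled out right after the property in the excerpt) gives that $\KG_c(X/G, -)$ restricts to a functor $\Sh(X/G, \mathcal{M}) \to \K^b(\mathcal{M})$. Applying it to $\pi_*\Lambda$ yields $\GG_c(X, \Lambda) \in \K^b(\mathcal{M})$, which is the desired conclusion. There is no substantive obstacle: the argument merely unwinds definitions and invokes the abstract property of Rickard's functor. The only point worth flagging is that one must work with $\Lambda[G]$-module stalks (rather than plain $\Lambda$-module stalks), so that the coset permutation modules $\Lambda[G/C_G(x)]$ appear in $\mathcal{M}$ with their correct $G$-action; this is exactly the content of the observation that $\pi_*\Lambda$ naturally lives in $\Sh(X/G, \Lambda[G])$ rather than just $\Sh(X/G, \Lambda)$.
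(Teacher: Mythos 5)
Your proof is correct and follows essentially the same route as the paper: identify $\GG_c(X,\Lambda) = \KG_c(X/G, \pi_*\Lambda)$, use the stalk computation $(\pi_*\Lambda)_y \cong \Lambda[G/C_G(x)]$, and invoke Property~\eqref{property: stalks and additive subcategories} of Rickard's functor. The extra remark you make about conjugate stabilizers yielding isomorphic permutation modules is left implicit in the paper but is a harmless and correct clarification.
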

\begin{proof}
This follows from the Property \ref{properties: rickards complex}-\eqref{property: stalks and additive subcategories}, because the stalks of the relevant sheaf $\pi_* \Lambda$ on $X/G$ are precisely of the form $\Lambda[G/C_G(x)]$.
See \cite[Theorems 3.2 and 3.5]{Ric94}.
\end{proof}

\begin{lemma}\label{lemma: rickard complexes and coefficients}
Let $\Lambda$ be as in Setup \ref{setting: coefficient rings}. Then on the level of bounded homotopy categories
$$\GG_c(X, \Lambda) \cong \GG_c(X, \Z_\ell) \otimes_{\Z_\ell} \Lambda .$$
\end{lemma}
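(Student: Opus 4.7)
The plan is to apply Property \ref{properties: rickards complex}-\eqref{property: compatibility with additive functors} of Rickard's complex to the base change functor, and then identify the resulting sheaf.

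More precisely, consider the additive functor
$$(-)\otimes_{\Z_\ell} \Lambda: \mo(\Z_\ell[G]) \to \mo(\Lambda[G]).$$
Applying Property \ref{properties: rickards complex}-\eqref{property: compatibility with additive functors} to this functor and to the sheaf $\pi_*\Z_\ell$ on $X/G$ gives a canonical isomorphism
$$\KG_c(X/G, \pi_*\Z_\ell) \otimes_{\Z_\ell} \Lambda \cong \KG_c\bigl(X/G, (\pi_*\Z_\ell \otimes_{\Z_\ell} \Lambda)^{\sh}\bigr)$$
inside $\K^b(\Lambda[G])$. By definition, the left-hand side equals $\GG_c(X, \Z_\ell) \otimes_{\Z_\ell} \Lambda$. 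So it remains to identify the sheaf on the right-hand side with $\pi_*\Lambda$ so that the right-hand side becomes $\GG_c(X, \Lambda)$.

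To verify this identification I would argue stalk-wise. Since $\pi: X \to X/G$ is finite, for any geometric point $y \in X/G$ the stalk of $\pi_*$ decomposes as a direct sum over the fiber, so for $y = \pi(x)$ we have $(\pi_*\Z_\ell)_y = \Z_\ell[G/C_G(x)]$ and $(\pi_*\Lambda)_y = \Lambda[G/C_G(x)]$. Hence tensoring $\pi_*\Z_\ell$ sectionwise with $\Lambda$ yields, after sheafification, precisely $\pi_*\Lambda$ (both are étale sheaves with the same stalks and the obvious comparison map is an isomorphism on stalks). Combined with the previous display this gives the required isomorphism $\GG_c(X, \Z_\ell) \otimes_{\Z_\ell} \Lambda \cong \GG_c(X, \Lambda)$ in $\K^b(G, \Lambda)$.

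The only real obstacle is that $\Z_\ell$-sheaves in the sense of Setup \ref{setting: noncommutative coefficient rings}-(ii) are pro-systems of torsion sheaves rather than honest sheaves, so one must check that Property \ref{properties: rickards complex}-\eqref{property: compatibility with additive functors} still applies. This is addressed by the paper's remark that Rickard's construction extends to this setting by the argument of \cite[Theorem 3.5]{Ric94} and further base change; concretely, one applies the torsion version levelwise to the pro-system defining $\pi_*\Z_\ell$ and then passes to the limit (and, where relevant, inverts $\ell$ or passes to the residue field to reach the desired $\Lambda$). Once this bookkeeping is accepted, the argument is purely formal.
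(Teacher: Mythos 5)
Your proof is correct and takes essentially the same route as the paper, which simply cites Property \ref{properties: rickards complex}-\eqref{property: compatibility with additive functors} for $F = (-\otimes_{\Z_\ell}\Lambda)$; you have merely unpacked the definitions of $\GG_c$ and verified the sheaf identification $(\pi_*\Z_\ell\otimes_{\Z_\ell}\Lambda)^{\sh}\cong\pi_*\Lambda$ on stalks, which is implicit in the paper's one-line argument.
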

\begin{proof}
This is an instance of Property \ref{properties: rickards complex}-\eqref{property: compatibility with additive functors} for $F = (-\otimes_{\Z_\ell} \Lambda)$.
\end{proof}

\begin{corollary}[Euler characteristic and reduction]\label{remark: euler characteristic and reduction}
The map $r_\ell: \G_0(G, \Ql) \to \G_0(G, \Fl)$ sends
\begin{equation}\label{equation: euler characteristic and reduction}
r_\ell: [H^\bullet_{c}(X, \Ql)] \mapsto [H^\bullet_{c}(X, \Fl)].
\end{equation}
\end{corollary}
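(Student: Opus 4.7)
The strategy is to avoid working with individual cohomology groups $H^i_c(X,\Lambda)$ — whose $\Zl$-integral structures can be subtle due to torsion — and instead produce a single integral representative at the chain level via Rickard's complex.

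First, I would set $C^\bullet := \GG_c(X,\Zl)$, which by Lemma \ref{corollary: rickards complex for finite group actions} is a bounded complex of finitely generated $\Zl[G]$-modules whose terms lie in $\add\{\Zl[G/C_G(x)] \mid x \in X(k)\}$. In particular, each $C^i$ is a free $\Zl$-module of finite rank equipped with a $G$-action, hence a $G$-stable $\Zl$-lattice. By Lemma \ref{lemma: rickard complexes and coefficients} applied to $F = (-\otimes_{\Zl} \Ql)$ and $F = (-\otimes_{\Zl} \Fl)$, we have
\begin{equation*}
\GG_c(X,\Ql) \;\cong\; C^\bullet \otimes_{\Zl} \Ql, \qquad \GG_c(X,\Fl) \;\cong\; C^\bullet \otimes_{\Zl} \Fl
\end{equation*}
in $\K^b(G,\Ql)$ and $\K^b(G,\Fl)$ respectively, and these complexes still represent $\RG_c(X,\Ql)$ and $\RG_c(X,\Fl)$.

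Next I would invoke the standard Euler characteristic identity in the Grothendieck group of a bounded complex. Since $G$ is finite, every finitely generated $\Ql[G]$-module has finite length, so
\begin{equation*}
[H^\bullet_c(X,\Ql)] \;=\; \sum_i (-1)^i [C^i \otimes_{\Zl} \Ql] \;\in\; \G_0(G,\Ql),
\end{equation*}
and likewise with $\Fl$ coefficients. Because each $C^i$ is a $G$-stable $\Zl$-lattice, each summand $[C^i \otimes \Ql]$ is integral by Definition \ref{integral}, so the alternating sum lies in $\G_0(G,\Ql)^{\integral}$. Thus the left-hand side of \eqref{equation: euler characteristic and reduction} is in the domain of $r_\ell$, which by Discussion \ref{reductionmodulolitems}-(i) is defined on all of $\G_0(G,\Ql)$ in any case since $G$ is profinite.

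Finally, the very construction of $r_\ell$ (Construction \ref{constr}), applied using the explicit integral structures $C^i \subseteq C^i \otimes \Ql$, gives
\begin{equation*}
r_\ell\bigl([C^i \otimes_{\Zl} \Ql]\bigr) \;=\; [C^i \otimes_{\Zl} \Fl].
\end{equation*}
Combining the two Euler characteristic identities by linearity of $r_\ell$ yields
\begin{equation*}
r_\ell[H^\bullet_c(X,\Ql)] \;=\; \sum_i (-1)^i [C^i \otimes_{\Zl} \Fl] \;=\; [H^\bullet_c(X,\Fl)],
\end{equation*}
which is the desired identity. There is no real obstacle here once Rickard's complex is available — its entire role is to provide a simultaneous lattice-level model compatible with all three coefficient rings $\Zl$, $\Ql$, $\Fl$ via Property \ref{properties: rickards complex}-\eqref{property: compatibility with additive functors}, turning what is a priori a statement about derived reduction of cohomology into the trivial statement that base change commutes with alternating sums of lattice-ranks.
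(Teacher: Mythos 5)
Your proof is correct and follows essentially the same route as the paper: both use $\GG_c(X,\Zl)$ as a complex of $\Zl$-free (hence integral) $\Zl[G]$-modules that specializes to $\GG_c(X,\Ql)$ and $\GG_c(X,\Fl)$ under base change, and then pass to Euler characteristics. Your version merely spells out the termwise alternating-sum bookkeeping that the paper's two-line argument leaves implicit.
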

\begin{proof}
Indeed, this can be rewritten as $r_\ell: [\GG_c(X, \Ql)] \mapsto [\GG_c(X, \Fl)]$. By Lemma \ref{corollary: rickards complex for finite group actions}, $\GG_c(X, \Zl)$ is a complex of torsionfree $\Zl$-modules; by Lemma \ref{lemma: rickard complexes and coefficients} it thus gives an integral structure of $\GG_c(X, \Ql)$ which reduces to $\GG_c(X, \Fl)$ modulo $\ell$ and the statement follows.
\end{proof}

Corollary \ref{remark: euler characteristic and reduction} can be also checked directly without Rickard's complexes. In particular if $G$ is trivial, it amounts to the standard equality of Betti numbers of $X$ with $\Ql$ and $\Fl$ coefficients \cite[p. 166]{Mil}.
We will employ similar reasoning in Corollary \ref{corollary: equivariant euler characterictic and reduction} for derived isotypic parts.

\subsection{Derived isotypic parts}\label{subsection: derived isotypic parts}
Let $Y$ be a separated scheme of finite type over a separably closed field $k$. Let $G$ and $H$ be finite groups. Suppose $Y$ carries commuting actions of $G$ and $H$. In other words, $G\times H$ act on $Y$ and we get the complexes
\begin{equation*}
\GG_c(Y, \Lambda) \in \K^b(G \times H, \Lambda)
\qquad\mathrm{and}\qquad 
\RG_c(Y, \Lambda) \in \D^b(G \times H, \Lambda).
\end{equation*}
We now want to make sense of the isotypic parts for the $H$-action on them.

\begin{notation}\label{definition: rickards isotypic parts}
For $\theta \in \Rep(H, \Lambda)$, we denote
$$(-)_\theta := (- \otimes_{\Lambda[H]} \theta ): \Rep(G \times H, \Lambda) \to \Rep(G, \Lambda)$$ 
the usual tensor product with $\theta$. We use the same notation for the induced functor on homotopy categories
$$(-)_\theta := (- \otimes_{\Lambda[H]} \theta ): \K^b(G \times H, \Lambda) \to \K^b(G, \Lambda).$$
We call this $(-)_\theta$ the {\it isotypic part} of $\theta$.
\end{notation}
\begin{notation}\label{definition: derived isotypic parts}
For $\theta \in \Rep(H, \Lambda)$, we write
$$(-)_{\eL \theta} := (- \otimes^{\eL}_{\Lambda[H]} \theta ): \D^b(G \times H, \Lambda) \to \D^b(G, \Lambda)$$
for the derived tensor product with $\theta$. We call $(-)_{\eL \theta}$ the {\it derived isotypic part} of $\theta$.
\end{notation}

It turns out that when $H$ acts freely, the derived isotypic parts of $\RG_c(Y, \Lambda)$ may be computed as the non-derived isotypic parts of $\GG_c(Y, \Lambda)$. 
\begin{lemma}\label{lemma: abstract derived and nonderived isotypic parts}
Suppose $Y$ carries commuting actions of finite groups $G$ and $H$. Assume that $H$ acts freely on $Y$.
Let $\theta \in \Rep(G, \Lambda)^{\ft}$. Then on the level of $\D^b(G, \Lambda)$, we have
$$\RG_c(Y, \Lambda)_{\eL \theta} \cong \iota(\GG_c(Y, \Lambda)_\theta).$$
\end{lemma}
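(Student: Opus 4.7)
The plan is to show that, under the freeness hypothesis on the $H$-action, Rickard's complex $\GG_c(Y,\Lambda)$ consists of terms that are projective (in fact free of finite rank) as $\Lambda[H]$-modules. Once this is established, the ordinary tensor product with $\theta$ over $\Lambda[H]$ already computes the derived tensor product, and the result drops out from Property \ref{properties: rickards complex}-\eqref{property: representing cohomology}.

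First, I would apply Lemma \ref{corollary: rickards complex for finite group actions} to the combined action of $G\times H$ on $Y$. This yields that the terms of $\GG_c(Y,\Lambda)$, viewed as $\Lambda[G\times H]$-modules, lie in $\add\bigl\{\Lambda[(G\times H)/C_{G\times H}(y)] \mid y\in Y(k)\bigr\}$. It then suffices to check that each such permutation module is free as a $\Lambda[H]$-module via the embedding $H\hookrightarrow G\times H$, $h\mapsto(1,h)$.

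Next, I would unpack freeness of the $H$-action. For $y\in Y(k)$, let $C:=C_{G\times H}(y)$. Since $H$ acts freely on $Y$, we have $C\cap(\{1\}\times H)=\{1\}$. A direct computation then shows that for any $(g_0,h_0)\in G\times H$, the left $H$-translate $h\cdot[(g_0,h_0)C]$ equals $[(g_0,h_0)C]$ iff $h_0^{-1}hh_0\in C\cap(\{1\}\times H)$, hence iff $h=1$. Thus $(G\times H)/C$ is a free $H$-set of finite cardinality, and $\Lambda[(G\times H)/C]$ is a free $\Lambda[H]$-module of finite rank. Consequently, each term of $\GG_c(Y,\Lambda)$ is a finitely generated projective $\Lambda[H]$-module.

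Finally, since $\GG_c(Y,\Lambda)$ is a bounded complex of $\Lambda[H]$-projectives, it is K-projective (in particular K-flat) as a complex of $\Lambda[H]$-modules, and therefore
\[
\GG_c(Y,\Lambda)\otimes_{\Lambda[H]}\theta \;\cong\; \GG_c(Y,\Lambda)\otimes^{\eL}_{\Lambda[H]}\theta
\]
in $\D^b(G,\Lambda)$. Combining with the identification $\iota(\GG_c(Y,\Lambda))\cong\RG_c(Y,\Lambda)$ of Property \ref{properties: rickards complex}-\eqref{property: representing cohomology} gives
\[
\RG_c(Y,\Lambda)_{\eL\theta}\;\cong\;\iota\bigl(\GG_c(Y,\Lambda)_\theta\bigr),
\]
as desired. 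The only subtle point is the bookkeeping in the second step — translating the freeness of $H$ on $Y$ into triviality of $C_{G\times H}(y)\cap H$, and from there to freeness of $(G\times H)/C_{G\times H}(y)$ as an $H$-set — but this is an elementary group-theoretic manipulation once set up correctly.
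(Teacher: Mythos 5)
Your proof is correct, and it takes a genuinely different route than the paper. The paper argues at the sheaf level: it writes $\RG_c(Y,\Lambda)_{\eL\theta}$ as $\RG_c(Y/(G\times H),\rho_*\Lambda)\otimes^{\eL}_{\Lambda[H]}\theta$, pushes the tensor product inside $\RG_c$ via the projection formula for \'etale cohomology, observes that each stalk of $\rho_*\Lambda$ is the permutation module on a $(G\times H)$-orbit and hence free over $\Lambda[H]$ by freeness of the $H$-action (so the derived tensor may be replaced by the ordinary one on stalks), and then pulls Rickard's functor back out using Properties \ref{properties: rickards complex}-\eqref{property: representing cohomology} and -\eqref{property: compatibility with additive functors}. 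You instead argue directly on the homotopy representative $\GG_c(Y,\Lambda)$: Lemma \ref{corollary: rickards complex for finite group actions} for the $G\times H$-action places the terms in $\add\{\Lambda[(G\times H)/C_{G\times H}(y)]\}$, your group-theoretic check (that $C_{G\times H}(y)\cap(\{1\}\times H)=\{1\}$ forces $H$ to act freely on $(G\times H)/C_{G\times H}(y)$) shows these are $\Lambda[H]$-free, whence the bounded complex is K-flat over $\Lambda[H]$ and the naive tensor computes the derived one. This avoids the projection formula entirely, at the cost of invoking Lemma \ref{corollary: rickards complex for finite group actions} (itself a repackaging of Property \ref{properties: rickards complex}-\eqref{property: stalks and additive subcategories}); the underlying input — freeness of $H$ on $(G\times H)$-orbits — is the same, but surfaced as a property of the complex's terms rather than of the coefficient sheaf's stalks. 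Both are fine; yours is arguably more elementary and self-contained.
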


\begin{proof}
Denote $\rho:Y \to Y/(G \times H)$ the quotient map. We compute:
\begin{align*}
\RG_c(Y, \Lambda)_{\eL \theta} 
&= \RG_c(Y/(G \times H), \rho_* \Lambda) \otimes^{\eL}_{\Lambda [H]} \theta \\
& = \RG_c(Y/(G \times H), \rho_*\Lambda\otimes^{\eL}_{\Lambda [H]} \theta) \\
& = \RG_c(Y/(G \times H), \rho_*\Lambda\otimes_{\Lambda [H]} \theta) \\
& = \iota(\KG_c(Y/(G \times H), \rho_*\Lambda\otimes_{\Lambda [H]} \theta)) \\
& = \iota(\KG_c(Y/(G \times H), \rho_*\Lambda)\otimes_{\Lambda [H]} \theta) \\
& = \iota(\GG_c(Y, \Lambda)_\theta)
\end{align*}
Indeed, the first equality is just the definition of $(-)_{\eL \theta}$ together with \eqref{equation: pushforward to quotient}. The second line is a general property of étale cohomology \cite[XVII, 5.2.9]{SGA4}. The third equality uses the freeness of the $H$-action on $Y$: each stalk of $\rho_*\Lambda$ is the permutation module on the corresponding $(G\times H)$-orbit inside $Y$, so in particular free as $\Lambda[H]$-module -- we thus do not need to derive the tensor product. The fourth line holds by Property \ref{properties: rickards complex}-\eqref{property: representing cohomology}, whereas the fifth one is Property \ref{properties: rickards complex}-\eqref{property: compatibility with additive functors}. The last line is again just the definition of $(-)_\theta$.
\end{proof}

\begin{remark}\label{remark: abstract description via local systems}
Denoting $\pi: Y \to Y/H$ be the obvious quotient map, we can alternatively think of $\RG(Y, \Lambda)_{\eL \theta}$ as the cohomology $\RG_c(Y/H, \Lambda_{\theta})$ of the $G$-equivariant local system 
$$\Lambda_{\theta} := \pi_* \Lambda \otimes_{\Lambda[T]} \theta$$
on $Y / H$. 
\end{remark}
\begin{proof}[Proof of Remark \ref{remark: abstract description via local systems}]
This is clear say from the third or fourth line of the above computation: to conclude, one only needs to factor $\rho$ as
$$Y \xrightarrow{\pi} Y / H \xrightarrow{\rho'} Y/(G \times H) $$
and identify the coefficient sheaves
$$\rho_*\Lambda\otimes_{\Lambda [H]} \theta = \rho'_* \pi_* \Lambda\otimes_{\Lambda [H]} \theta = \rho'_*( \pi_* \Lambda\otimes_{\Lambda [H]} \theta)$$
by commuting $(-)_{\theta}$ with the complementary pushforward on stalks.
The isotypic part from Lemma \ref{lemma: abstract derived and nonderived isotypic parts} thus indeed identifies with $\RG_c(Y/H, \Lambda_{\theta}) = \GG_c(Y/H, \Lambda_{\theta}).$ 
\end{proof}
\section{\texorpdfstring{$p$}{p}-adic Deligne--Lusztig spaces and their cohomology}
\label{section: p-adic deligne--lusztig theory}
In \S \ref{section: overview of p-adic Deligne--Lusztig spaces} we provide a brief account of the $p$-adic Deligne--Lusztig spaces of \cite{Iva20b}. 
In \S \ref{section: the concrete case of GL_n} we specialize to the case of our interest for $GL_n$ following \cite{Iva19, Iva18}. In \S \ref{section: isotypic parts at finite level} we discuss étale cohomology of the truncated spaces $\dot{X}_h$ with coefficients in any $\Lambda$ from Setup \ref{setting: coefficient rings} and its derived isotypic parts. We use Rickard's complexes to control their behaviour with respect to reduction $r_{\ell}$. 
In \S \ref{section: equivariant cohomology of deligne--lusztig spaces} we consider the isotypic parts of the cohomology of whole space $\dot{X}$. 
In \S \ref{section: representation stability of the truncated deligne--lusztig spaces} we show that this definition is independent of $h$ up to an explicit even cohomological shift. 
In \S \ref{section: finteness properties of the cohomology} we prove that it gives a well-defined class in the Grothendieck group of finite length representations for both $\Lambda = \Ql$ and $\Fl$. During the process, we prove that these classes are compatible under the reduction $r_{\ell}$. 
In \S \ref{section: realization of the local langlands correspondence} we review the partial realization of the local Langlands correspondence in characteristic zero via $p$-adic Deligne--Lusztig spaces of \cite{Iva19}. 
In \S \ref{section: main result} we deduce our main result -- Theorem \ref{theorem: upshot}. In the complementary \S \ref{section: a version with multiplicities} we present a more naive version of this theorem, resulting in an extra multiplicity for small $\ell$.

\subsection{Overview of \texorpdfstring{$p$}{p}-adic Deligne--Lusztig spaces}\label{section: overview of p-adic Deligne--Lusztig spaces}
Let $K$ be a local non-archimedean field as in \S \ref{notation: non-archimedean fields}.
For a scheme $X$ over $K$, the {\it loop space} $LX$ is the presheaf on the opposite category of perfect $k$-algebras given by
$LX: R \mapsto X(\mathbb{W}(R)[\varpi^{-1}]).$
When $X$ actually lives over $\O_K$, the {\it positive loop space} $L^+X$ resp. the {\it truncated loop spaces} $L^+_hX$ for $h \in \N$ defined as
$L^+X: R \mapsto X(\mathbb{W}(R))$ resp. $L^+_hX: R \mapsto X(\mathbb{W}_h(R))$. See \cite{Zhu16, Iva20b}.

Let $\mathbf{G}$ be an unramified reductive group over $K$. Fix a maximal $K$-rational maximally split torus $\mathbf{T}$, which splits after an unramified extension,
and a $K$-rational Borel $\mathbf{B} = \mathbf{U}\mathbf{T}$ with unipotent radical $\mathbf{U}$. Let $\breve{K}$ be the maximal unramified extension of $K$ and $\sigma \in \Aut(\breve{K} / K)$ the Frobenius lift. Then $\mathbf{G}$ and $\mathbf{T}$ split over $\breve{K}$ and we denote $N$ resp. $W$ the normalizer resp. the Weyl group of $\mathbf{T}_{\breve{K}}$ in $\mathbf{G}_{{\breve{K}}}$. 
The Bruhat decomposition yields a stratification of $\mathbf{G}/\mathbf{B} \times \mathbf{G}/\mathbf{B}$ into locally closed subspaces $\O(w)$, $w \in W$. The bigger space $\mathbf{G}/\mathbf{U} \times \mathbf{G}/\mathbf{U}$ can be decomposed into $\O(\dot{w})$, $\dot{w} \in N_G(T)$.

Fix $b \in \mathbf{G}(\breve{K})$, and $\dot{w} \in N$ resp. its image $w \in W$. The $p$-adic Deligne--Lusztig spaces are defined in \cite[Definition 8.3]{Iva20b} as the following fiber products in presheaves on the opposite category of perfect algebras:
\begin{equation}\label{equation: definition of deligne-lusztig spaces}
\begin{tikzcd}
\dot{X}_{\dot{w}}(b) \arrow{d}{} \arrow{r}{} &  L(\O(\dot{w})) \arrow{d} \\
L(\mathbf{G}/\mathbf{U}) \arrow{r}{\id \times b\sigma } &  L(\mathbf{G}/\mathbf{U}) \times L(\mathbf{G}/\mathbf{U}) 
\end{tikzcd}
\qquad
\begin{tikzcd}
X_w(b) \arrow{d}{} \arrow{r}{} &  L(\O(w)) \arrow{d} \\
L(\mathbf{G}/\mathbf{B}) \arrow{r}{\id \times b\sigma } &  L(\mathbf{G}/\mathbf{B})\times L(\mathbf{G}/\mathbf{B})  
\end{tikzcd}
\end{equation}
They depend up to isomorphism only on the $\sigma$-conjugacy class $[b] := \{ g^{-1}b\sigma(g) \mid g \in \mathbf{G}(\breve{K}) \}$ of $b$. 
There is a natural map
\begin{equation}\label{equation: T_w torsor}
   \dot{X}_{\dot{w}}(b) \to X_{w}(b).
\end{equation}
The spaces \eqref{equation: T_w torsor} carry actions of the profinite groups
$$G_b := \mathbf{G}_b(K) = \{ g \in \mathbf{G}(\breve{K}) \mid g^{-1} b \sigma(g) = b \}, $$
$$T_w := \mathbf{T}_w(K) = \{ t \in \mathbf{T}(\breve{K}) \mid t^{-1} \dot{w} \sigma(t) = \dot{w} \}.$$
More precisely, both $\dot{X}_{\dot{w}}(b)$ and $X_{w}(b)$ carry an action of $G_b$ such that \eqref{equation: T_w torsor} is $G_b$-equivariant \cite[\S 8.2.1]{Iva20b}.
Furthermore, $\dot{X}_{\dot{w}}(b)$ carries a $T_w$-action, commuting with the above $G_b$-action \cite[\S 8.2.2]{Iva20b}. The map \eqref{equation: T_w torsor} is a $T_w$-torsor \cite[Proposition 11.9]{Iva20b}.

Given a reductive $\O_K$-model of $\mathbf{G}$, we can replace the loop space $L(-)$ in \eqref{equation: definition of deligne-lusztig spaces} by the integral and truncated loop spaces $L^+(-)$ and $L^+_h(-)$, $h \in \N$. This yields the presheaves 
\begin{equation*}
\dot{X}_{\dot{w}}(b)^+ \qquad \text{and} \qquad \dot{X}_{\dot{w}}(b)^+_h 
\qquad \text{resp.} \qquad
X_{w}(b)^+ \qquad \text{and} \qquad X_{w}(b)^+_h.
\end{equation*}
When $h = 1$, these truncated spaces are representable by the (perfections of) usual Deligne--Lusztig varieties. In this sense, the above definition generalizes the standard setting.

In many cases, $p$-adic Deligne--Lusztig spaces are ind-representable by perfect pfp schemes \cite[Theorem 9.1, Corollary 9.2]{Iva20b}. Moreover, pfp perfect schemes have a well-behaved theory of étale sheaves equipped with six operations \cite[Appendix A, \S A.3]{Zhu16}; it agrees with the theory of étale sheaves on finite type models. In particular, compactly supported cohomology of such spaces is well-defined.

\subsection{The case of our interest}\label{section: the concrete case of GL_n}
From now on, we specialize to the case of split $\mathbf{G} = \mathbf{GL}_n$ over $K$ and its hyperspecial parahoric model $\mathbf{G}_{\O} = \mathbf{GL}_n$ over $\O_K$. We put $b=1$ and consider a Coxeter element $w \in W$, we take $\dot{w} \in N$ to be a lift of $w$ with the same Kottwitz invariant as $b$. This is the most elementary case among those studied in \cite{Iva19, Iva18, Iva20b}.
For brevity, we denote
$$\dot{X} := \dot{X}_{\dot{w}}(b) \qquad \text{resp.}\qquad \dot{X}_{\O} := \dot{X}_{\dot{w}}(b)^+ \qquad \text{resp.} \qquad \dot{X}_h := \dot{X}_{\dot{w}}(b)^+_h,$$
$$X := X_{w}(b) \qquad \text{resp.}\qquad X_{\O} := X_{w}(b)^+ \qquad \text{resp.} \qquad X_h := X_{w}(b)^+_h.$$
These spaces satisfy \cite[\S 2.6]{Iva19}
\begin{equation} \label{equation: decomposition of Deligne--Lusztig sheaves}
\dot{X} \cong \coprod_{G/G_{\O}} \dot{X}_{\O} \qquad \text{and} \qquad \dot{X}_{\O} \cong \lim_{h} \dot{X}_h.
\end{equation}
In accordance with \S \ref{section: overview of p-adic Deligne--Lusztig spaces} they bear commuting actions of the following groups:
\begin{itemize}
\item[•] $\dot{X}$ has commuting actions of the locally profinite groups $G = \mathbf{GL}_n(K)$ and $T = \mathbf{T}_w(K) = L^\times$;
\item[•] $\dot{X}_{\O}$ has commuting action of the profinite subgroups $G_{\O} = \mathbf{GL}_n(\O_K)$ and $T_{\O} = \O_L^\times$;
\item[•] $\dot{X}_h$, $h \in \N$ have commuting actions of the finite quotients 
$$G_h = \mathbf{GL}_n(\O_K)/ (1 + M_{n \times n}(\mathfrak{p}_K^h)) \qquad \text{and} \qquad T_h = \O_L^\times/T^h = \O_L^\times / (1 + \mathfrak{p}^h_L).$$
\end{itemize}
These actions are compatible with the isomorphisms \eqref{equation: decomposition of Deligne--Lusztig sheaves} in the obvious way and the morphism \eqref{equation: T_w torsor} from $\dot{X}$ to $X$ is equivariant.

\begin{proposition}[{\cite[Proposition 7.4]{Iva18}}] \label{proposition: representability by smooth affine schemes}
The truncated spaces $\dot{X}_h$ for $h \in \N$ are representable by perfections of smooth affine $\overline{\F}_q$-schemes of finite type. 
\end{proposition}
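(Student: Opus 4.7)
The plan is to exploit the defining pullback diagram together with Greenberg realization for truncated loop spaces. First, for any smooth affine scheme $Y$ of finite type over $\O_K$, the truncated positive loop space $L^+_h Y$ is representable by the perfection of a smooth affine $\overline{\F}_q$-scheme of finite type; this is exactly what the Greenberg functor produces. Applied to $\mathbf{G} = \mathbf{GL}_n$ and its (smooth, quasi-affine) homogeneous space $\mathbf{G}/\mathbf{U}$ -- which for $\mathbf{GL}_n$ embeds as an open subscheme of $\mathbb{A}^{n^2}$ parametrizing bases of $K^n$ -- one obtains that $L^+_h \mathbf{G}$ and $L^+_h (\mathbf{G}/\mathbf{U})$ are perfections of smooth schemes of finite type. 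Since the Bruhat stratum $\O(\dot{w}) \subset \mathbf{G}/\mathbf{U} \times \mathbf{G}/\mathbf{U}$ is locally closed and smooth, $L^+_h \O(\dot{w})$ sits as a locally closed smooth subscheme of $L^+_h(\mathbf{G}/\mathbf{U})^2$.

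With $b = 1$, the pullback diagram then presents $\dot{X}_h$ as a locally closed subscheme of $L^+_h(\mathbf{G}/\mathbf{U})$, giving representability and finite type. For smoothness, the crucial point is that $\sigma$ is a Frobenius lift on the truncated Witt vectors, so on the Greenberg realization of any $\O_K$-scheme its derivative on tangent spaces at $\overline{\F}_q$-points vanishes. Consequently the tangent map to $\id \times \sigma$ is of the form $(\id, 0)$, and this is transverse to $T L^+_h \O(\dot{w})$ inside $T L^+_h(\mathbf{G}/\mathbf{U})^2$: indeed, the second projection $L^+_h \O(\dot{w}) \to L^+_h(\mathbf{G}/\mathbf{U})$ is smooth and surjective onto its image, inherited from the fact that the diagonal $\mathbf{G}$-action on $\mathbf{G}/\mathbf{U} \times \mathbf{G}/\mathbf{U}$ restricts to a transitive action on the cell $\O(\dot{w})$. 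A dimension count, or equivalently a Lang-style reformulation via the map $g \mapsto g^{-1}\sigma(g)$ on $L^+_h \mathbf{G}$ (which is étale because $d\sigma = 0$), then delivers smoothness of $\dot{X}_h$.

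The main obstacle is affineness. For $\mathbf{G} = \mathbf{GL}_n$ with $b = 1$ and Coxeter $w$ there is a very concrete description of $X$ in terms of tuples of vectors in $\breve{K}^n$ satisfying Frobenius-cyclicity and appropriate nondegeneracy conditions; this realises each truncation as an affine subvariety cut out inside an affine space by minors-nonvanishing and polynomial equations, and the $T_w$-torsor $\dot{X}_h \to X_h$ preserves affineness since $T_w/T_w^h$ is a finite étale group scheme. Alternatively one can argue by induction on $h$: the base case $\dot{X}_1$ is the classical Coxeter Deligne--Lusztig variety, affine by \cite{DL76}, and the level transition $\dot{X}_h \to \dot{X}_{h-1}$ is obtained by restricting the affine space fibration $L^+_h \mathbf{G}/\mathbf{U} \to L^+_{h-1}\mathbf{G}/\mathbf{U}$ (with fiber a vector group coming from the $h$th layer of the Witt vectors), and affineness is preserved under such base change.

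Combining representability, smoothness, affineness and finite type at each level yields that $\dot{X}_h$ is the perfection of a smooth affine $\overline{\F}_q$-scheme of finite type. The entire smoothness step is essentially a formal tangent computation once the Greenberg realization has been set up; the substantive content is the affineness verification, which is where one must either invoke the concrete $\mathbf{GL}_n$-coordinates or propagate the classical result of Deligne--Lusztig through the level filtration.
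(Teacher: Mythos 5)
The paper does not prove this proposition; it cites it directly from \cite[Proposition 7.4]{Iva18}, so there is no internal proof to compare against. Your general strategy -- Greenberg/truncated-loop representability, a Lang-style transversality argument for smoothness, and a separate argument for affineness -- is indeed the right circle of ideas for this kind of statement, and the remark that the smoothness step is ``essentially a formal tangent computation once Greenberg realization is set up'' is on target. That said, two specific claims as written do not hold up.

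First, the assertion that $\mathbf{G}/\mathbf{U}$ for $\mathbf{GL}_n$ ``embeds as an open subscheme of $\mathbb{A}^{n^2}$ parametrizing bases of $K^n$'' is wrong on both counts: the space of bases of $K^n$ is $\mathbf{GL}_n$ itself, not $\mathbf{GL}_n/\mathbf{U}$, and $\mathbf{G}/\mathbf{U}$ is only quasi-affine (locally closed in an affine space via Plücker-type coordinates), not open in an affine space. Second, and more seriously, your fallback induction on $h$ for affineness has a real gap. You claim that $\dot{X}_h \to \dot{X}_{h-1}$ is obtained by restricting the affine-space fibration $L^+_h(\mathbf{G}/\mathbf{U}) \to L^+_{h-1}(\mathbf{G}/\mathbf{U})$ and that ``affineness is preserved under such base change.'' But $\dot{X}_h$ is not the naive base change of $\dot{X}_{h-1}$ along that fibration: the paper's own Proposition~\ref{proposition: affine fibration} (citing \cite[Proposition 7.7]{Iva18}) records that $\dot{X}_h \to \dot{X}_{h-1}$ factors through $\dot{X}_h/T^{h-1}_h$ with fibers $\mathbb{A}^{n-1}$, \emph{not} the full graded piece of $\mathrm{Lie}(\mathbf{G}/\mathbf{U})$, so extra conditions are being imposed at each level and you cannot inherit affineness by a pullback argument. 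A smooth map with affine fibers is not automatically an affine morphism, so ``fibers $\cong \mathbb{A}^{n-1}$'' plus ``base affine'' does not give you ``total space affine'' without additional input. Your first route via explicit coordinates is the one that actually works in the Coxeter case, but the crucial missing point is that the nondegeneracy condition is the nonvanishing of a \emph{single} function (a determinant $\det(v, \sigma v, \ldots, \sigma^{n-1}v)$), so the locus is a principal open $D(g)$ of an affine scheme and hence affine; ``minors-nonvanishing'' conditions in general cut out only quasi-affine loci.
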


\begin{remark}\label{fintype}
In particular, there is a good notion of compactly supported étale cohomology for these finite level $\dot{X}_h$ and the discussion of \S \ref{section: equivariant etale cohomology} applies. The compactly supported cohomology of $\dot{X}_h$ is concentrated only in the range $[\dim \dot{X}_h, 2 \dim \dot{X}_h]$ by affineness. The Poincaré duality between $H^{\bullet}$ and $H_c^{\bullet}$ holds by smoothness.
\end{remark}
The discussion in \S \ref{section: overview of p-adic Deligne--Lusztig spaces} implies the following.
\begin{proposition}\label{proposition: T_h torsor}
The $T_h$-action on $\dot{X}_h$ is free and the quotient map $\pi: \dot{X}_h \to X_h$ is a $T_h$-torsor.
\end{proposition}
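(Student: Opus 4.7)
The idea is to deduce the statement from the analogous torsor property at the level of loop spaces, which was already stated in \S \ref{section: overview of p-adic Deligne--Lusztig spaces}: by \cite[Proposition 11.9]{Iva20b}, the map $\dot{X} \to X$ is a $T$-torsor. The whole task is to descend this through the integral truncations.

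First I would descend from $\dot{X} \to X$ to $\dot{X}_{\O} \to X_{\O}$. Because $b = 1$ and $\dot{w}$ is chosen with Kottwitz invariant matching $b$, both defining pullback squares in \eqref{equation: definition of deligne-lusztig spaces} can be carried out inside $L^{+}(\mathbf{G}/\mathbf{U})$ and $L^{+}(\mathbf{G}/\mathbf{B})$ respectively, identifying $\dot{X}_{\O}$ and $X_{\O}$ as the preimages of the integral loci in $\dot{X}$ and $X$. Since the $T$-action on $\dot{X}$ restricts to an action of $T_{\O} = T \cap L^{+}\mathbf{T}$ on $\dot{X}_{\O}$ compatibly with the map to $X_{\O}$, and since torsors restrict to torsors under pullback, this gives that $\dot{X}_{\O} \to X_{\O}$ is a $T_{\O}$-torsor.

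Second I would pass from $\dot{X}_{\O} \to X_{\O}$ to $\dot{X}_h \to X_h$. The truncated loop space $L^{+}_h$ is the quotient of $L^{+}$ by the kernel $L^{+, \geq h}$ of truncation, which is pro-unipotent and in particular acts freely on smooth affine schemes; these quotients commute with the fiber-product formation in \eqref{equation: definition of deligne-lusztig spaces}. Consequently, $\dot{X}_h$ is the quotient of $\dot{X}_{\O}$ by $T^h := T_{\O} \cap L^{+, \geq h}\mathbf{T}$, and similarly $X_h$ is the quotient of $X_{\O}$ by the image of $T^h$ under the structure torsor map. The freeness of the $T_{\O}$-action on $\dot{X}_{\O}$ restricts to freeness of $T^h$, and freeness of the quotient action of $T_h = T_{\O}/T^h$ on $\dot{X}_h = \dot{X}_{\O}/T^h$ follows. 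That $\pi : \dot{X}_h \to X_h$ is a $T_h$-torsor then follows from the $T_{\O}$-torsor property at the integral level by quotienting out the common subgroup $T^h$ on both sides.

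The main obstacle will be the compatibility of the quotient for the torsor structure with the truncation $L^{+} \twoheadrightarrow L^{+}_h$. Concretely, one must justify that forming $\dot{X}_{\O}/T^h$ as presheaves on perfect $k$-algebras yields the presheaf $\dot{X}_h$ defined by the truncated pullback diagram, rather than merely a sheaf mapping to it. Once the ind-representability of these spaces by perfections of pfp schemes (Proposition \ref{proposition: representability by smooth affine schemes}) is invoked, this reduces to a standard fpqc descent check for torsors under the pro-unipotent kernel $L^{+, \geq h}\mathbf{T}$, which is a well-behaved affine group in the perfect setting. The rest is formal.
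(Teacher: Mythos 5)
The critical step in your argument is the claim in the second paragraph that $\dot{X}_h = \dot{X}_{\O}/T^h$ (and the analogous claim for $X_h$). This is false, and it contradicts Proposition \ref{fib} from the paper (cited from \cite[Proposition 7.7]{Iva18}): the transition map $\dot{X}_h \to \dot{X}_{h-1}$ factors through $\dot{X}_h / T^{h-1}_h \to \dot{X}_{h-1}$, and this second map is a perfectly smooth fibration with fibers $\mathbb{A}^{n-1,\perf}$ — so the fibers of the truncation are \emph{strictly larger} than the orbits of $T^{h-1}_h$. Accumulating this over all $h$, the map $\dot{X}_{\O} \to \dot{X}_h$ has infinite-dimensional fibers containing affine directions coming from the truncation of the $\mathbf{G}$-side, not merely the profinite group $T^h$. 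Your derivation of the $T_h$-torsor structure on $\dot{X}_h \to X_h$ from the $T_{\O}$-torsor by ``quotienting out $T^h$ on both sides'' therefore has nothing to rest on. (Note also that the claim ``$X_h$ is the quotient of $X_{\O}$ by the image of $T^h$'' would trivialize, since the image of $T^h$ in $X_{\O}$ is trivial.) A related slippage appears already in the first paragraph: ``torsors restrict to torsors under pullback'' is a statement about base change of the base $Z' \to Z$, not about simultaneously shrinking the structure group from $T$ to $T_{\O}$; the preimage of $X_{\O}$ in the $T$-torsor $\dot{X}$ is $T$-stable and hence a union of $T/T_{\O}$-many copies of $\dot{X}_{\O}$, not $\dot{X}_{\O}$ itself.

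The paper does not descend from the loop-space level at all. The torsor structure is established \emph{directly at each finite truncation level}: the map $L^+_h(\mathbf{G}/\mathbf{U}) \to L^+_h(\mathbf{G}/\mathbf{B})$ is an $L^+_h\mathbf{T}$-torsor (because $\mathbf{G}/\mathbf{U} \to \mathbf{G}/\mathbf{B}$ is a $\mathbf{T}$-torsor and $L^+_h$ preserves this), and $\dot{X}_h \to X_h$ inherits the $T_h$-torsor structure by pulling back along the defining fiber product squares \eqref{equation: definition of deligne-lusztig spaces} at level $h$. This is what ``the discussion in \S\ref{section: overview of p-adic Deligne--Lusztig spaces} implies'' refers to, and it sidesteps the entire problem of relating different truncation levels. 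Freeness of the $T_h$-action is then immediate from the torsor property. If you want a complete argument, the direct approach at each level $h$ — rather than a descent from the limit — is both cleaner and correct.
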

We now remind the reader about the notation $T^{h-1}_h = \ker(T_h \to T_{h-1})$ from \S \ref{Tsec -- prof}.
The maps $\dot{X}_h \to \dot{X}_{h-1}$ factor as follows.
\begin{proposition}[{\cite[Proposition 7.7]{Iva18}}]\label{fib}
\label{proposition: affine fibration}
The morphism $\dot{X}_h \to \dot{X}_{h-1}$ factors through a perfectly smooth map $\dot{X}_h / T^{h-1}_h \to \dot{X}_{h-1}$ whose fibers are isomorphic to the perfection of $\mathbb{A}^{n-1}$.
\end{proposition}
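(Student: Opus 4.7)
The factorization step is immediate. The reduction $\dot{X}_h \to \dot{X}_{h-1}$ is equivariant for the surjection $T_h \twoheadrightarrow T_{h-1}$, so the kernel $T^{h-1}_h$ of this quotient acts trivially on $\dot{X}_{h-1}$ via the reduction map, giving the desired factorization $\dot{X}_h \to \dot{X}_h/T^{h-1}_h \to \dot{X}_{h-1}$.

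For perfect smoothness and the fiber description, the plan is to work locally on $\dot{X}_{h-1}$ via the fiber-product presentation \eqref{equation: definition of deligne-lusztig spaces} at truncation level $h$. The reduction of truncated loop spaces $L^+_h \to L^+_{h-1}$ is perfectly smooth, with fibers modelled on the relevant tangent space. Pick a lift $g_{h-1}^{(0)} \in L^+_h \mathbf{G}$ of a point $g_{h-1} \in \dot{X}_{h-1}$; every lift $g_h \in L^+_h(\mathbf{G}/\mathbf{U})$ of $g_{h-1}$ has the form $g_h = g_{h-1}^{(0)}(1 + \varpi^{h-1}X)$ with $X \in \mathfrak{g}/\mathfrak{u}$, and a direct calculation yields
\[
g_h^{-1}\sigma(g_h) \equiv y^{(0)} + \varpi^{h-1}\bigl(y^{(0)}\sigma(X) - X y^{(0)}\bigr) \pmod{\varpi^h},
\]
where $y^{(0)} := (g_{h-1}^{(0)})^{-1}\sigma(g_{h-1}^{(0)})$. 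The Bruhat condition that $(g_h \mathbf{U}, \sigma(g_h) \mathbf{U}) \in L^+_h \mathcal{O}(\dot{w})$ then translates into a Lang-type equation for $X$, demanding that $y^{(0)}\sigma(X) - X y^{(0)}$ lie in the tangent space to the Bruhat stratum at $y^{(0)}$.

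The principal technical step, and the main obstacle, is to check for $b=1$ and a Coxeter element $w$ in $\mathbf{GL}_n$ that this equation cuts out a subscheme of $\mathfrak{g}/\mathfrak{u}$ which, after quotienting by the $T^{h-1}_h$-action, is isomorphic to the perfection of $\mathbb{A}^{n-1}$. Here one exploits the explicit realization of $\dot{w}$ as a cyclic shift matrix: in a basis adapted to $\dot{w}$, the $\sigma$-semilinear operator $X \mapsto y^{(0)}\sigma(X) - X y^{(0)}$ becomes a companion-matrix-type map whose kernel and cokernel can be read off directly. One then identifies the solution variety as a finite étale $T^{h-1}_h$-cover of the perfection of $\mathbb{A}^{n-1}$, with the $T^{h-1}_h$-action realized by explicit translations by the finite subgroup $\mathbb{F}_{q^n}$ inside the Lie algebra $\mathfrak{t}_w$ of the twisted torus. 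Perfect smoothness of the quotient over $\dot{X}_{h-1}$ then follows from the $\mathbb{F}_q$-linearity of the Lang equation and the smooth variation of $y^{(0)}$ in $g_{h-1}$.

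As a dimension consistency check, $\dim \mathcal{O}(\dot{w}) = \dim \mathbf{G}/\mathbf{U} + \ell(w) = n(n+1)/2 + (n-1)$ for Coxeter $w$, so the codimension of $\mathcal{O}(\dot{w})$ in $(\mathbf{G}/\mathbf{U})^2$ is $n(n-1)/2 + 1$, yielding $\dim \dot{X}_h = h(n-1)$ and fibers of $\dot{X}_h / T^{h-1}_h \to \dot{X}_{h-1}$ of dimension $n-1$, in agreement with the claim.
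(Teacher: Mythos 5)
The paper does not prove this proposition; it is quoted verbatim from \cite[Proposition 7.7]{Iva18}, so there is no in-text argument to compare against. Your sketch does follow the route taken in that reference---lifting through the truncated loop spaces, extracting a Lang-type equation, and exploiting the cyclic-shift structure of a Coxeter lift---and your formal preliminaries (the factorization from equivariance, the first-order expansion $g_h^{-1}\sigma(g_h)\equiv y^{(0)}+\varpi^{h-1}(y^{(0)}\sigma(X)-Xy^{(0)})$, which is valid since $2(h-1)\geq h$, and the dimension count $\dim\dot X_h = h(n-1)$) are all correct.

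But the step you yourself call ``the principal technical step'' is where the argument actually ends. You assert, without computation, that in a basis adapted to $\dot w$ the operator $X\mapsto y^{(0)}\sigma(X)-Xy^{(0)}$ has a companion-matrix form from which the solution locus can be ``read off'' as a $T^{h-1}_h$-torsor over the perfection of $\mathbb{A}^{n-1}$, with the $T^{h-1}_h\cong\F_{q^n}$-action realized by additive translation in the $\dot w$-twisted Cartan direction. That identification is precisely the content of the proposition, and it is not established here. Similarly, perfect smoothness of $\dot X_h/T^{h-1}_h\to \dot X_{h-1}$ needs an actual étale-local trivialization as a product with the perfection of $\mathbb{A}^{n-1}$; $\F_q$-linearity of the Lang equation and smooth variation of $y^{(0)}$ in $g_{h-1}$ alone do not deliver this. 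So you have correctly laid out the scaffolding, but the substantive verification---the explicit coordinate computation for $\mathbf{GL}_n$ at a Coxeter element, which is the whole content of Ivanov's proposition---is left undone.
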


\subsection{Isotypic parts at finite level} \label{section: isotypic parts at finite level}
Let us now discuss the equivariant cohomology of the truncated spaces $\dot{X}_h$, $h \in \N$ and their derived isotypic parts.
These finite level phenomena play a crucial for the discussion of the cohomology of the whole space $\dot{X}$ in \S \ref{section: equivariant cohomology of deligne--lusztig spaces}.

The spaces $\dot{X}_h$ and $X_h$ are equipped with commuting actions of $G_h$ and $T_h$. Their compactly supported cohomology is well-defined and inherits an action of $G_h \times T_h$. Appealing to \S \ref{subsection: finite group actions} for some finite type models, we get the complexes
\begin{equation*}
\GG_c(\dot{X}_h, \Lambda) \in \K^b(G_h \times T_h, \Lambda)
\qquad\mathrm{and}\qquad 
\RG_c(\dot{X}_h, \Lambda) \in \D^b(G_h \times T_h, \Lambda).
\end{equation*}
We now want to consider the isotypic parts for the $T_h$-action on them. In the semisimple case, this takes the form of a direct sum decomposition indexed by characters. In the non-semisimple case, one needs to be careful in choosing the right definition -- see Remark \ref{remark: definitions of isotypic parts}. To not distract the reader, we simply present the correct notions: for any $\theta \in \Hom_{\Grp}(T_h, \Lambda^\times)$, the discussion from \S \ref{subsection: derived isotypic parts} gives the complexes
\begin{equation*}
\GG_c(\dot{X}_h, \Lambda)_{\theta} \in \K^b(G_h, \Lambda)
\qquad\mathrm{and}\qquad 
\RG_c(\dot{X}_h, \Lambda)_{\eL \theta} \in \D^b(G_h, \Lambda).
\end{equation*}

In our situation, the derived isotypic parts of $\RG_c(\dot{X}_h, \Lambda)$ may be computed as the non-derived isotypic parts of $\GG_c(\dot{X}_h, \Lambda)$. This plays an important role in our comparison.
\begin{lemma}\label{lemma: derived and nonderived isotypic parts}
On the level of $\D^b(G_h, \Lambda)$, we have 
$$\RG_c(\dot{X}_h, \Lambda)_{\eL \theta} \cong \iota(\GG_c(\dot{X}_h, \Lambda)_\theta).$$
\end{lemma}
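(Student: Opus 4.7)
The plan is to deduce this from the abstract statement Lemma \ref{lemma: abstract derived and nonderived isotypic parts}, which does essentially all the work in a form tailored to the present setup. I simply need to check that the present situation satisfies its hypotheses.

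First I would recall the setting. The scheme $\dot{X}_h$ is separated of finite type over $\overline{\F}_q$ by Proposition \ref{proposition: representability by smooth affine schemes}, and by \S\ref{section: the concrete case of GL_n} it carries commuting actions of the finite groups $G_h$ and $T_h$. Thus with $Y = \dot{X}_h$, $G = G_h$, $H = T_h$ the general framework of \S\ref{subsection: derived isotypic parts} applies, and we have well-defined objects $\GG_c(\dot{X}_h, \Lambda)_\theta \in \K^b(G_h, \Lambda)$ and $\RG_c(\dot{X}_h, \Lambda)_{\eL \theta} \in \D^b(G_h, \Lambda)$.

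Next, I would verify the remaining hypotheses of Lemma \ref{lemma: abstract derived and nonderived isotypic parts}. The key condition is that the $H$-action be free: this is exactly the content of Proposition \ref{proposition: T_h torsor}, which asserts that the quotient $\pi : \dot{X}_h \to X_h$ is a $T_h$-torsor. The finite-type hypothesis on $\theta$ is automatic, since $\theta \in \Hom_{\Grp}(T_h, \Lambda^\times)$ is a one-dimensional representation of the finite group $T_h$.

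With the hypotheses verified, the conclusion of Lemma \ref{lemma: abstract derived and nonderived isotypic parts} reads
\[
\RG_c(\dot{X}_h, \Lambda)_{\eL \theta} \cong \iota\bigl(\GG_c(\dot{X}_h, \Lambda)_\theta\bigr)
\]
in $\D^b(G_h, \Lambda)$, which is precisely the claim. There is no real obstacle here: the nontrivial content has been placed in the abstract Lemma \ref{lemma: abstract derived and nonderived isotypic parts}, whose proof in turn relies on the freeness of the $H$-action to ensure that the stalks of $\rho_*\Lambda$ are free $\Lambda[H]$-modules, so that the derived and underived tensor products with $\theta$ coincide.
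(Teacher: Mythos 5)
Your proposal is correct and follows essentially the same approach as the paper: the paper's proof also just cites Proposition \ref{proposition: T_h torsor} for freeness of the $T_h$-action and then invokes Lemma \ref{lemma: abstract derived and nonderived isotypic parts}. Your additional verification of the separated-finite-type hypothesis (via Proposition \ref{proposition: representability by smooth affine schemes}) and of the finite-type condition on $\theta$ is a reasonable and harmless expansion.
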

\begin{proof}
Since the action of $T_h$ on $\dot{X}_h$ is free by Proposition \ref{proposition: T_h torsor}, the claim follows from Lemma \ref{lemma: abstract derived and nonderived isotypic parts} applied to the action of $G_h \times T_h$ on $\dot{X}_h$.
\end{proof}

\begin{remark}
Note that this argument applies to other truncated Deligne--Lusztig spaces of \S \ref{section: overview of p-adic Deligne--Lusztig spaces}.  
\end{remark}

\begin{remark}[Description via local systems]\label{remark: description via local systems}
Denoting $\pi: \dot{X}_h \to X_h$ the obvious quotient map, we can alternatively think of $\RG(\dot{X}_h, \Lambda)_{\eL \theta}$ as the cohomology $\RG_c(X_h, \Lambda_{\theta})$ of the $G_h$-equivariant local system 
$$\Lambda_{\theta} := \pi_* \Lambda \otimes_{\Lambda[T_h]} \theta$$
on $X_h = \dot{X}_h / T_h$. Indeed, this follows from Remark \ref{remark: abstract description via local systems}.
\end{remark}

Lemma \ref{lemma: derived and nonderived isotypic parts} allows to compute $\RG(\dot{X}_h, \Lambda)_{\eL \theta}$ by applying the additive functor $(-)_{\theta}$ to the concrete representative $\GG(\dot{X}_h, \Lambda)$ in the homotopy category. This procedure is compatible with base changes of the coefficient ring $\Lambda$, so in particular with reduction modulo $\ell$. 
\begin{notation}
We abuse notation and write
$[H^\bullet_c(\dot{X}_h, \Lambda)_{\eL \theta}] := [\RG_c(\dot{X}_h, \Lambda)_{\eL \theta}] \in \G_0(G_h, \Lambda)$.
\end{notation}

\begin{corollary}\label{corollary: equivariant euler characterictic and reduction}
The reduction map $r_\ell: \G_0(G_h, \Ql) \to \G_0(G_h, \Fl)$ sends
$$[H^\bullet_c(\dot{X}_h, \Ql)_{\theta}] \mapsto [H^\bullet_c(\dot{X}_h, \Fl)_{\eL r_\ell(\theta )}].$$
\end{corollary}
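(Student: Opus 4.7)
The plan is to reduce the equivariant statement to the non-equivariant Corollary \ref{remark: euler characteristic and reduction} by exhibiting an explicit integral structure at the level of Rickard's complex. By Lemma \ref{lemma: derived and nonderived isotypic parts}, the classes in question can be computed as
\begin{equation*}
[H^\bullet_c(\dot{X}_h, \Ql)_{\theta}] = [\GG_c(\dot{X}_h, \Ql)_\theta] \quad\text{and}\quad [H^\bullet_c(\dot{X}_h, \Fl)_{\eL r_\ell(\theta)}] = [\GG_c(\dot{X}_h, \Fl)_{r_\ell(\theta)}].
\end{equation*}
So it suffices to produce a $\Zl[G_h]$-complex whose base changes to $\Ql$ and $\Fl$ recover $\GG_c(\dot{X}_h, \Ql)_\theta$ and $\GG_c(\dot{X}_h, \Fl)_{r_\ell(\theta)}$ respectively, and whose terms are $\Zl$-torsion-free.

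First I would observe that since $T_h$ is a finite group, the character $\theta \colon T_h \to \Ql^\times$ takes values in roots of unity, and hence lifts canonically to an integral character $\theta_{\Zl} \colon T_h \to \Zl^\times$ (e.g. by the Teichmüller section of \S \ref{section: coefficient rings}) whose mod-$\ell$ reduction is $r_\ell(\theta)$. The candidate integral structure is then $\GG_c(\dot{X}_h, \Zl)_{\theta_{\Zl}} := \GG_c(\dot{X}_h, \Zl) \otimes_{\Zl[T_h]} \theta_{\Zl}$. By Property \ref{properties: rickards complex}-\eqref{property: compatibility with additive functors} applied to the additive functor $(-\otimes_{\Zl[T_h]} \theta_{\Zl}) \otimes_{\Zl} \Ql$ (resp. its $\Fl$-analogue), together with Lemma \ref{lemma: rickard complexes and coefficients}, these base changes recover $\GG_c(\dot{X}_h, \Ql)_\theta$ and $\GG_c(\dot{X}_h, \Fl)_{r_\ell(\theta)}$ on the nose.

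The key remaining step is $\Zl$-torsion-freeness of the terms of $\GG_c(\dot{X}_h, \Zl)_{\theta_{\Zl}}$. By Lemma \ref{corollary: rickards complex for finite group actions}, each term of $\GG_c(\dot{X}_h, \Zl)$ lies in $\add\{\Zl[(G_h \times T_h)/C_{G_h\times T_h}(x)] \mid x \in \dot{X}_h(\overline{k})\}$. Since $T_h$ acts freely on $\dot{X}_h$ by Proposition \ref{proposition: T_h torsor}, any such stabilizer intersects the $T_h$-factor trivially, so the permutation module is $\Zl[T_h]$-free. Therefore the tensor product with $\theta_{\Zl}$ is a finite free $\Zl$-module, and in particular torsion-free.

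The conclusion is then immediate: $\GG_c(\dot{X}_h, \Zl)_{\theta_{\Zl}}$ is an integral structure on $\GG_c(\dot{X}_h, \Ql)_\theta$ whose reduction modulo $\ell$ is $\GG_c(\dot{X}_h, \Fl)_{r_\ell(\theta)}$, and taking Euler characteristics yields the desired identity in $\G_0(G_h, \Fl)$. The only subtle point is the freeness verification above, which is the equivariant analogue of the input used in Corollary \ref{remark: euler characteristic and reduction}; beyond that, everything is a formal consequence of the functorial properties of Rickard's complex.
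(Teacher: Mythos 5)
Your proof is correct and takes essentially the same route as the paper: both exhibit $\GG_c(\dot{X}_h, \Zl)_{\theta}$ as an integral structure by using the freeness of the $T_h$-action on $\dot{X}_h$ to force the terms of Rickard's complex to be free (hence projective) $\Zl[T_h]$-modules, so that tensoring with the integral character yields torsion-free $\Zl$-modules. You make explicit a few steps the paper leaves implicit, namely the appeal to Lemma \ref{lemma: derived and nonderived isotypic parts} and the Teichmüller lift of $\theta$, but the mathematical content is identical.
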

\begin{proof}
Let $\theta \in \Irr(T_h, \Ql)$ regarded as an element in $\Rep(T_h, \Zl)^{\ft}$; then $(\theta \otimes_{\Zl} \Fl) = r_\ell(\theta )$ by design.
By Lemma \ref{lemma: rickard complexes and coefficients}, the complex $\GG_c(\dot{X}_h, \Zl)$ specializes to analogous complexes with $\Ql$ and $\Fl$ coefficients under the obvious base changes. 
Tensoring with $\theta$ and using the associativity of the tensor product, we obtain the following identification.
\begin{equation*}
\begin{tikzcd}
\K^b(G_h, \Ql) & \K^b(G_h, \Zl) \arrow{l}[swap]{(-\otimes_{\Zl} \Ql)} \arrow{r}{(-\otimes_{\Zl} \Fl) } & \K^b(G_h, \Fl) \\
\GG_c(\dot{X}_h, \Ql)_{\theta} & \GG_c(\dot{X}_h, \Zl)_{\theta} \arrow[mapsto]{l} \arrow[mapsto]{r} & \GG_c(\dot{X}_h, \Fl)_{r_\ell ( \theta )}
\end{tikzcd}
\end{equation*}
Since $T_h$ acts freely on $\dot{X}_h$, the complex $\GG_c(\dot{X}_h, \Zl)$ actually consists of projective $\Zl[T_h]$-modules by Lemma \ref{corollary: rickards complex for finite group actions}. The map $\GG_c(\dot{X}_h, \Zl)_{\theta} \to \GG_c(\dot{X}_h, \Ql)_{\theta}$ is thus injective, so $\GG_c(\dot{X}_h, \Zl)_{\theta}$ defines an integral structure of $\GG_c(\dot{X}_h, \Ql)_{\theta}$ which reduces to $\GG_c(\dot{X}_h, \Fl)_{r_{\ell} (\theta)}$ modulo $\ell$. 
\end{proof}

\begin{remark}\label{remark: definitions of isotypic parts}
When the category $\rep(T_h, \Lambda)$ is semisimple, the complexes $\GG(\dot{X}_h, \Lambda)$ decompose into a direct sum of isotypic components for the $T_h$-action and everything can be computed on the graded vector space $H^{\bullet}_c(\dot{X}_h, \Lambda)$. 
When $\rep(T_h, \Lambda)$ is not semisimple, this is not the case. Although one can still consider naive definitions of isotypic components on the graded vector space $H^{\bullet}_c(\dot{X}_h, \Lambda)$, they do not behave well with respect to geometric arguments. 
We briefly return to such naive definition in \S \ref{section: a version with multiplicities}; this reveals a pathology in terms of an explicit multiplicity $\ell^m$ -- see Theorem \ref{theorem: naive upshot}. 
\end{remark}

\begin{remark}
In the case $h = 1$, the space $\dot{X}_1$ is the perfection of the usual Deligne--Lusztig variety and $G_1 = \mathbf{GL}_n(\F_q)$. The complexes $\RG_c(\dot{X}_1, \Lambda)_{\eL \theta}$ were studied in \cite{BR02, BDR16}; they play an important role in the structure of $\D^b(G_1, \Fl)$.
\end{remark}

\subsection{Isotypic parts for whole Deligne--Lusztig spaces} \label{section: equivariant cohomology of deligne--lusztig spaces}
The goal of this section is to introduce the isotypic parts of cohomology of the whole Deligne--Lusztig varieties $\dot{X}$. More precisely, we provide a definition of the derived isotypic components
$$\RG_c(\dot{X}, \Lambda)_{\eL \theta} \in \D^b(G, \Lambda),$$
where $\Lambda$ is a coefficient ring and $\theta \in \Hom_{\Grp}(T, \Lambda^\times)$.

Since $\dot{X}$ is infinite-dimensional, the compactly supported cohomology does not a priori make sense. The actual construction proceeds as in \cite{Iva19} -- we take the cohomology at a finite level $\dot{X}_h$ for sufficiently big $h \geq 1$, and construct the cohomology of $\dot{X}$ out of it by an explicit representation-theoretic procedure.

Take $\dot{X}$, $\dot{X}_{\O}$, $\dot{X}_h$ resp. their quotients $X$, $X_{\O}$, $X_h$ as in \S \ref{section: the concrete case of GL_n} and $\Lambda$ as in Setup \ref{setting: coefficient rings}.
\begin{construction} \label{construction: cohomology of the whole space}
Let $\theta \in \Hom_{\Grp}(T, \Lambda^\times)$ and $h \geq \lvl(\theta)$. Then we define
$$\RG_c(\dot{X}, \Lambda)_{\eL \theta} := \cInd_{ZG_{\O}}^G \RG_c(\dot{X}_h, \Lambda)_{\eL \theta} \in \D^b(G, \Lambda)$$
where $\RG_c(\dot{X}_h, \Lambda)_{\eL \theta}$ is regarded as an element of $\D^b(ZG_{\O}, \Lambda)$ by inflation to $G_{\O}$ and by letting the center $Z$ of $G$ act via $\theta$. This is well-defined up to an even cohomological shift.
\end{construction}

\begin{discussion}\label{discussion: cohomology of the whole space}
Let us expand the construction in words and point out what needs to be checked. The restriction of the smooth character $\theta$ to $T_{\O}$ factors through any $T_h$ with $h \geq \lvl(\theta)$. By \S \ref{section: isotypic parts at finite level}, we get the derived isotypic part
$$\RG_c(\dot{X}_h, \Lambda)_{\eL \theta} \in \D^b(G_h , \Lambda).$$
We will show in Lemma \ref{lemma: representation stability of finite deligne--lusztig spaces} that the equivariant tower of the finite level Deligne--Lusztig spaces $\dot{X}_h$ is {\it representation stable}; the derived complex above is thus independent of the choice of $h \geq \lvl(\theta)$ up to an even cohomological shift.

Having a good notion of derived isotypic parts at finite level, we regard $\RG_c(\dot{X}_h, \Lambda)_{\eL \theta}$ as an object of $\D^b(G_{\O}, \Lambda)$ by inflating along the quotient map $G_{\O} \to G_h$. This makes good sense, because the inflation is exact.
Furthermore, we upgrade this to an object of $\D^b(ZG_{\O}, \Lambda)$ by letting the center $Z$ act via the desired character $\theta$ of $T$. 

Finally, we obtain the sought-for complex
$$\RG_c(\dot{X}_h, \Lambda)_{\eL \theta} \in \D^b(G, \Lambda)$$
by applying the functor $\cInd_{ZG_{\O}}^G$ of compact induction from the clopen subgroup $ZG_{\O}$. This makes good sense, because $\cInd_{ZG_{\O}}^G$ is exact by \S \ref{subsection: categories of smooth representations and their grothendieck groups}.
\end{discussion}

We have completely ignored the question of finiteness throughout the discussion. However, we will eventually need some sort of finiteness to have a good notion of Euler characteristic. We will discuss this thoroughly in \S \ref{section: finteness properties of the cohomology}. For the moment, we record one elementary lemma in this direction.
\begin{lemma}\label{lemma: finite type of cohomology of the whole space}
Let $\theta \in \Hom_{\Grp}(T, \Lambda^\times)$. Then $\RG_c(\dot{X}, \Lambda)_{\eL \theta}\in \D^b(G, \Lambda)$ is of finite type.
\end{lemma}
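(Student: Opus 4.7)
The plan is to unwind the definition of $\RG_c(\dot{X}, \Lambda)_{\eL \theta}$ from Construction \ref{construction: cohomology of the whole space} and check finite type termwise, using that both the inflation from $G_h$ to $ZG_{\O}$ and the compact induction $\cInd_{ZG_{\O}}^G$ preserve finite type.

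First, fix $h \geq \lvl(\theta)$, so that $\RG_c(\dot{X}, \Lambda)_{\eL \theta} \cong \cInd_{ZG_{\O}}^G \RG_c(\dot{X}_h, \Lambda)_{\eL \theta}$ in $\D^b(G, \Lambda)$. By Proposition \ref{proposition: representability by smooth affine schemes}, $\dot{X}_h$ is (the perfection of) a smooth affine variety of finite type over $\overline{\F}_q$, so its compactly supported cohomology is concentrated in finitely many degrees and each $H^i_c(\dot{X}_h, \Lambda)$ is a finitely generated $\Lambda$-module. By Lemma \ref{lemma: derived and nonderived isotypic parts}, the derived isotypic part may be represented by $\GG_c(\dot{X}_h, \Lambda)_\theta$, which by Properties \ref{properties: rickards complex} is a bounded complex of $\Lambda[G_h]$-modules whose terms are finitely generated over $\Lambda$. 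In particular, $\RG_c(\dot{X}_h, \Lambda)_{\eL \theta} \in \D^b(G_h, \Lambda)$ is represented by a bounded complex of finite-dimensional (hence finite type) $\Lambda[G_h]$-modules.

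Next, inflation along the surjection $ZG_{\O} \twoheadrightarrow G_h$ (where $Z$ acts via the character $\theta$) is an exact functor that preserves finite type: a $\Lambda[G_h]$-module of finite type is certainly of finite type as a $\Lambda[ZG_{\O}]$-module. Thus the object stays represented by a bounded complex of finite type $\Lambda[ZG_{\O}]$-modules. Since $ZG_{\O} \leq G$ is open, hence clopen by Remark \ref{remark: clopeness}, and $\cInd_{ZG_{\O}}^G$ is exact (\S \ref{subsection: categories of smooth representations and their grothendieck groups}), we can apply it termwise. By Lemma \ref{lemma: induction and finite type}, each term remains of finite type in $\Rep(G, \Lambda)^{\ft}$, and the complex remains bounded. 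Hence $\RG_c(\dot{X}, \Lambda)_{\eL \theta} \in \D^b(G, \Lambda)$ is of finite type.

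The argument is essentially routine once the construction is unpacked; the only slightly delicate point is to observe that the derived isotypic part at finite level has finite type terms (not merely finite type cohomology), for which we invoke Rickard's representative via Lemma \ref{lemma: derived and nonderived isotypic parts}. This is what allows the compact induction step to directly output a bounded complex of finite type $G$-representations rather than just a complex with finite type cohomology.
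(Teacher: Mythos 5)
Your proof is correct and takes essentially the same route as the paper's: unwind Construction \ref{construction: cohomology of the whole space}, use Rickard's complex to get a bounded representative with finite type terms at finite level, and observe that inflation/extension by $\theta$ and compact induction (via Lemma \ref{lemma: induction and finite type}) preserve finite type. The paper cites Lemma \ref{corollary: rickards complex for finite group actions} directly for the finite level step, whereas you route through Lemma \ref{lemma: derived and nonderived isotypic parts} and Properties \ref{properties: rickards complex}, but the content is identical.
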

\begin{proof}
At the finite level, $\RG_c(\dot{X}_h, \Lambda)_{\eL \theta_h}$ is of finite type in $\D^b(G_h, \Lambda)$ by Lemma \ref{corollary: rickards complex for finite group actions}. This clearly remains true when we inflate to $G_{\O}$ and when we let $Z$ act via $\theta$, simply because we add more structure to the same complex of $\Lambda$-modules. Finally, compact induction from the clopen subgroup $ZG_{\O}$ preserves finite type by Lemma \ref{lemma: induction and finite type}, so that
$$\RG_c(\dot{X}, \Lambda)_{\eL \theta} = \cInd_{ZG_{\O}}^G \RG_c(\dot{X}_h, \Lambda)_{\eL \theta} \in \D^b(G, \Lambda)$$
is of finite type: it can be represented by a bounded complex with finite type terms.
\end{proof}

\subsection{Representation stability of the truncated Deligne--Lusztig spaces}
\label{section: representation stability of the truncated deligne--lusztig spaces}

To provide the missing input for Construction \ref{construction: cohomology of the whole space}, we discuss the representation stability of the sequence of spaces $\dot{X}_h$. This works up to an explicit even cohomological shift.

\begin{lemma}\label{lemma: representation stability of finite deligne--lusztig spaces}
Let $\theta \in \Hom_{\Grp}(T, \Lambda^{\times})$ and $h \geq h' \geq \lvl(\theta)$. Then we have a canonical identification
$$\RG_c(\dot{X}_{h}, \Lambda)_{\eL \theta} \cong \RG_c(\dot{X}_{h'}, \Lambda)_{\eL \theta}[2(n-1)(h'-h)] \in \D^b(G_h, \Lambda).$$ 
\end{lemma}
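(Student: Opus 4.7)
By induction on $h-h'$, it suffices to establish the base case $h=h'+1$ (with the shift $[-2(n-1)]$) and then iterate. The idea is to realize both sides as cohomology of local systems on coarse quotients, factor the map between the Deligne--Lusztig spaces through the intermediate quotient provided by Proposition \ref{fib}, and apply the projection formula to an affine fibration.

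More precisely, consider the case $h = h'+1$. Applying Remark \ref{remark: description via local systems} to both sides, we rewrite
\begin{equation*}
\RG_c(\dot{X}_{h'+1}, \Lambda)_{\eL\theta} = \RG_c(X_{h'+1}, \Lambda_\theta^{(h'+1)}),
\qquad
\RG_c(\dot{X}_{h'}, \Lambda)_{\eL\theta} = \RG_c(X_{h'}, \Lambda_\theta^{(h')}),
\end{equation*}
where $\Lambda_\theta^{(k)}$ denotes the $G_k$-equivariant local system on $X_k$ associated to the $T_k$-torsor $\dot{X}_k \to X_k$ and the character $\theta$. Set $Y := \dot{X}_{h'+1}/T^{h'}_{h'+1}$; by Proposition \ref{proposition: affine fibration}, the map $\dot{X}_{h'+1} \to \dot{X}_{h'}$ factors as $\dot{X}_{h'+1} \twoheadrightarrow Y \xrightarrow{p} \dot{X}_{h'}$, with $p$ perfectly smooth with fibers $\mathbb{A}^{n-1}$. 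Note that $Y$ carries a free action of $T_{h'} = T_{h'+1}/T^{h'}_{h'+1}$ with quotient $X_{h'+1}$, and $p$ is $T_{h'}$-equivariant (with $T_{h'}$ acting on $\dot{X}_{h'}$ in the natural way).

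Since $h' \geq \lvl(\theta)$, the character $\theta$ is trivial on $T^{h'}_{h'+1}$ and factors through $T_{h'}$, so $\Lambda_\theta^{(h'+1)}$ is equally well computed as the local system on $X_{h'+1}$ associated to the $T_{h'}$-torsor $Y \to X_{h'+1}$ and $\theta: T_{h'} \to \Lambda^\times$. The $T_{h'}$-equivariant map $p$ descends to a $G_{h'+1}$-equivariant map $f: X_{h'+1} \to X_{h'}$ (with $G_{h'+1}$ acting on $X_{h'}$ through $G_{h'+1} \twoheadrightarrow G_{h'}$), and one checks that the $T_{h'}$-torsor $Y \to X_{h'+1}$ is the pullback along $f$ of $\dot{X}_{h'} \to X_{h'}$, yielding a canonical isomorphism $\Lambda_\theta^{(h'+1)} \cong f^*\Lambda_\theta^{(h')}$. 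Since $f$ is obtained from $p$ by descending a $T_{h'}$-torsor on the target, $f$ itself is perfectly smooth with fibers $\mathbb{A}^{n-1}$ (one verifies this on an étale cover trivializing $\dot{X}_{h'} \to X_{h'}$). Consequently $Rf_!\Lambda \cong \Lambda[-2(n-1)]$ as $G_{h'+1}$-equivariant sheaves on $X_{h'}$ (up to a Tate twist which is irrelevant for the group action), and the projection formula gives
\begin{equation*}
\RG_c(X_{h'+1}, f^*\Lambda_\theta^{(h')}) \cong \RG_c(X_{h'}, \Lambda_\theta^{(h')} \otimes Rf_!\Lambda) \cong \RG_c(X_{h'}, \Lambda_\theta^{(h')})[-2(n-1)],
\end{equation*}
which is the claimed identity for $h = h'+1$ (inflated to $\D^b(G_{h'+1},\Lambda)$). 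Iterating gives the general statement.

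The main obstacle is the bookkeeping of $G_h$-equivariance throughout, and in particular verifying that $f$ is perfectly smooth with $\mathbb{A}^{n-1}$-fibers (not merely a quotient of such). This descent point is what requires the torsor $\dot{X}_{h'} \to X_{h'}$ to be étale-locally trivial, which holds since $T_{h'}$ is finite and acts freely.
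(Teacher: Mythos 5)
Your proof is correct and is essentially a repackaging of the paper's argument in the language of local systems on the coarse quotients $X_h$ rather than derived tensor products over $\Lambda[T_h]$. Both arguments hinge on the factorization of $\dot{X}_{h'+1} \to \dot{X}_{h'}$ through $Y = \dot{X}_{h'+1}/T^{h'}_{h'+1}$ from Proposition \ref{fib}, the perfectly-smooth $\mathbb{A}^{n-1}$-fibration $Y \to \dot{X}_{h'}$, and the projection formula. Your identification $\Lambda_\theta^{(h'+1)} \cong f^*\Lambda_\theta^{(h')}$ via the cartesian square is the torsor-pullback rephrasing of the paper's stalkwise computation $\pi_*\Lambda \otimes^{\eL}_{\Lambda[T_h]} \Lambda[T_{h-1}] \cong \pi'_*\Lambda$; both rely in the end on the freeness of the $T_h$-action, which in your route enters through Remark \ref{remark: description via local systems} and in the paper's is offered as an alternative (in the remark following the proof) to the projectivity of $\Lambda[T_{h-1}]$ over $\Lambda[T_h]$. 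The one extra step you take is descending the affine fibration from $Y \to \dot{X}_{h'}$ to $X_{h'+1} \to X_{h'}$, which you justify by étale descent — this is fine, as $\dot{X}_{h'} \to X_{h'}$ is a torsor under the finite group $T_{h'}$ and hence finite étale — whereas the paper avoids this by staying upstairs and tensoring down the character in the final step. Both proofs carry the same light-touch bookkeeping of $G_h \times T_{h'}$-equivariance for the Künneth-type identification, and both iterate the $h = h'+1$ case. So this is the same argument in a slightly different, and arguably cleaner, packaging.
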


\begin{proof}
From Propositions \ref{proposition: T_h torsor} and \ref{proposition: affine fibration} we obtain a commutative diagram of schemes
\begin{equation*}
\begin{tikzcd}
\dot{X}_h \arrow{r}{g} \arrow[swap]{rd}{\pi} & \dot{X}_h/T^{h-1}_h \arrow{r}{f} \arrow{d}{\pi'} & \dot{X}_{h-1} \arrow{d}{\pi''} \\
& X_{h} \arrow{r}{\overline{f}} & X_{h-1}
\end{tikzcd}
\end{equation*}
The fibers of $f$ are perfections of affine spaces $\mathbb{A}^{n-1}$ over the base field $\overline{\F}_q$.
At the same time, the maps $g$ and $\pi$, resp. $\pi'$, $\pi''$ are finite quotient maps for the group actions of $T^{h-1}_h$ and $T_h$ resp. $T_{h-1}$. For any $\theta \in \Rep(T_{h'}, \Lambda)^{\ft} \subseteq \Rep(T_{h}, \Lambda)^{\ft}$, we can compute inside $\D^b(G_h, \Lambda)$ as follows.
\begin{equation*}
\begin{aligned}
\RG_c(\dot{X}_h, \Lambda) \otimes^{\eL}_{\Lambda[T_h]} \theta
& \cong  \RG_c(X_h, \pi_*\Lambda) \otimes^{\eL}_{\Lambda[T_h]} \theta &  \\
& \cong \RG_c(X_h, \pi_*\Lambda) \otimes^{\eL}_{\Lambda[T_{h}]} \Lambda[T_{h-1}] \otimes^{\eL}_{\Lambda[T_{h-1}]} \theta  & \\
& \cong \RG_c(X_h, \pi_*\Lambda \otimes^{\eL}_{\Lambda[T_{h}]} \Lambda[T_{h-1}]) \otimes^{\eL}_{\Lambda[T_{h-1}]} \theta & \\
& \cong  \RG_c(X_h, \pi'_*\Lambda) \otimes^{\eL}_{ \Lambda[T_{h-1}]} \theta \qquad & \\
& \cong  \RG_c(\dot{X}_h/T^{h-1}_h, \Lambda) \otimes^{\eL}_{ \Lambda[T_{h-1}]} \theta \qquad & \\
& \cong  (\RG_c(\dot{X}_{h-1}, \Lambda) \otimes^{\eL}_{ \Lambda[T_{h-1}]} \theta)[-2(n-1)] \qquad & \\
\end{aligned}
\end{equation*}

The first equality holds by \eqref{equation: pushforward to quotient}. The second is just the abusive identification of $\theta$ as representation of either $T_h$ or $T_{h-1}$; the second tensor product is actually nonderived. The third line holds by the projection formula for étale cohomology.

For the fourth line, we need to see that $\pi_*\Lambda \otimes^{\eL}_{\Lambda[T_{h}]} \Lambda[T_{h-1}] \cong \pi'_*\Lambda$. Here, we note that $\Lambda[T_{h-1}] \cong \Lambda[T_{h}/T^{h-1}_h]$ is projective as $\Lambda[T_h]$-module by Lemma \ref{lemma: projectivity of permutation modules} as $T^{h-1}_h$ is a $p$-group for $h \geq 2$. The desired isomorphism can be checked on stalks; by the above projectivity it reduces to the nonderived formula
$$\Lambda[T_h/C_{T_h}(x)] \otimes_{\Lambda[T_h]} \Lambda[T_{h-1}] \cong \Lambda[T_{h-1}/C_{T_{h-1}}(x)],$$
which is clear. (Here, $x$ is any geometric point of $X_h$ and $C_{T_h}(x)$ is its stabilizer with respect to our character $\theta$.)
The fifth line is then obtained in the same way as the first.

The sixth line holds because $f$ is a smooth map with fibers perfections of $\mathbb{A}^{n-1}$. Indeed, one can use smooth base change together with the canonical local identifications of the form $\RG_c(V~\times~\mathbb{A}^{n-1},~\Lambda) \cong \RG_c(V, \Lambda)\otimes^{\eL}_{\Lambda} \RG_c(\mathbb{A}^{n-1}, \Lambda) \cong \RG_c(V, \Lambda)[-2(n-1)]$. One should observe that this is compatible with the $T_h$-actions.
\end{proof}

\begin{remark}
In the fourth step, we employed the projectivity of $\Lambda[T_{h-1}]$. Alternatively, we can use that the $T_h$-action on $\dot{X}_h$ is free, showing the stalkwise projectivity of the other factor $\pi_*\Lambda$.
\end{remark}

\begin{remark}\label{remark: definition of RG(X)}
The above stabilization holds only up to an even degree shift -- as remarked in \cite[p. 10]{Iva19}, one can formally get rid of this shift. In the current context when all $\dot{X}_h$ are perfectly smooth, this is simply achieved by taking cohomology $\RG(\dot{X}_{h}, \Lambda)_{\eL \theta}$ without the compact support condition.  
\end{remark}

\subsection{Finiteness and Euler characteristic}
\label{section: finteness properties of the cohomology}

Construction \ref{construction: cohomology of the whole space} is given by a composition of several exact functors. It follows that whenever it preserves finite length of Euler characteristic, it specializes to the level of Grothendieck groups. 

\begin{construction}\label{construction: euler characteristic of the whole space}
\label{definition: euler characteristic of the whole space}
Let $h \geq \lvl(\theta)$ and assume that $\cInd_{ZG_{\O}}^G [\RG_c(\dot{X}_h, \Lambda)_{\eL \theta}]$ is a formal linear combination of finite length representations. Then it gives a well-defined element of $\G_0(G, \Lambda)$ which we denote 
\begin{equation*}
[\RG_c(\dot{X}, \Lambda)_{\eL \theta}] := \cInd_{ZG_{\O}}^G [\RG_c(\dot{X}_h, \Lambda)_{\eL \theta}] \in \G_0(G, \Lambda).
\end{equation*}
We alternatively denote this element by $[H^\bullet_c(\dot{X}, \Lambda)_{\eL \theta}]$. It is independent of the choice of $h \geq \lvl(\theta)$ by Lemma \ref{lemma: representation stability of finite deligne--lusztig spaces}.
\end{construction}

Beware that by our conventions, $\G_0(G, \Lambda)$ is the Grothendieck group of smooth {\it finite length} representations. A priori, we only know that $\RG_c(\dot{X}, \Lambda)_{\eL \theta}$ is of {\it finite type} by Lemma \ref{lemma: finite type of cohomology of the whole space}. This necessitates in the extra assumption that $\cInd_{ZG_{\O}}^G [\RG_c(\dot{X}_h, \Lambda)_{\eL \theta}]$ is a formal linear combination of finite length representations. 
Whenever $\RG_c(\dot{X}, \Lambda)_{\eL \theta} \in \D^b(\dot{X}, \Lambda)$ can be represented by a complex with finite length entries, its Euler characteristic gives rightaway an element of $\G_0(G, \Lambda)$ which agrees with Construction \ref{construction: euler characteristic of the whole space}. 

We now explain why Construction \ref{construction: euler characteristic of the whole space} applies to the cases considered in this paper -- see \S \ref{subsubsection: characteristic zero coefficients} for $\Lambda = \Ql$ and \S \ref{subsubsection: characteristic ell coefficients} for $\Lambda = \Fl$.

\subsubsection{Characteristic zero coefficients}\label{subsubsection: characteristic zero coefficients}

Employing results of \cite{Vig96} and \cite{Iva19}, we now show that Construction \ref{construction: euler characteristic of the whole space} applies for $[H^{\bullet}_c(\dot{X}, \Ql)_{\eL \theta}]$ when $\theta$ lies in general position. At the same time we keep track of integral structures.
This will imply the same finiteness for $[\RG_c(\dot{X}, \Fl)_{\eL \theta}]$ via reduction $r_\ell$ in \S \ref{section: comparison of isotypic parts}.

Since we work in characteristic zero, we can work non-derived. The main input is the following.

\begin{proposition}[{\cite[Theorem 5.1]{Iva19}}]\label{theorem: admissibility of construction}
Let $\theta \in \mathscr{X}(L, \overline{\Q}_\ell)$. Then $\pm [H^\bullet_c(\dot{X}, \overline{\Q}_\ell)_\theta]$ is admissible.
\end{proposition}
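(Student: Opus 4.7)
The plan is to reduce admissibility of the infinite-dimensional virtual cohomology to admissibility of a compact induction from $ZG_\O$ of a finite-dimensional virtual representation, and then to invoke a Mackey-based admissibility criterion exploiting that $ZG_\O$ is compact modulo the center.

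By Construction \ref{construction: euler characteristic of the whole space}, for any $h \geq \lvl(\theta)$ one has
\[
[H^\bullet_c(\dot{X}, \overline{\Q}_\ell)_\theta] \;=\; \cInd_{ZG_\O}^G\,[H^\bullet_c(\dot{X}_h, \overline{\Q}_\ell)_\theta] \;\in\; \G_0(G, \overline{\Q}_\ell),
\]
with the inner class inflated along $G_\O \twoheadrightarrow G_h$ and with $Z$ acting via $\theta|_Z$. Since $\dot{X}_h$ is of finite type by Proposition \ref{proposition: representability by smooth affine schemes} and the $\theta$-isotypic functor on $T_h$-representations is exact in characteristic zero, the inner class is a formal $\Z$-linear combination of finite-dimensional smooth $ZG_\O$-representations of central character $\theta|_Z$. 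By exactness of $\cInd_{ZG_\O}^G$ and the fact that admissibility is preserved under taking differences in $\G_0$, it suffices to show that $\cInd_{ZG_\O}^G V$ is admissible for each such finite-dimensional smooth $V$.

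For a compact open subgroup $K \leq G$, the double-coset decomposition for compact induction from the open subgroup $ZG_\O$ yields a \emph{direct sum}
\[
(\cInd_{ZG_\O}^G V)^K \;=\; \bigoplus_{g \in ZG_\O \backslash G / K} V^{ZG_\O \cap gKg^{-1}},
\]
with each summand finite-dimensional. The main obstacle is showing that only finitely many summands are nonzero. This is where the compact-modulo-center structure of $ZG_\O$ inside $G = \mathbf{GL}_n(K)$ is essential: after shrinking $K$ inside $G_\O$ and using the Cartan decomposition together with the central translation action, double-coset representatives can be chosen as $g = \varpi^\lambda u$ with $\lambda$ ranging over dominant cocharacters modulo $\Z \cdot (1,\dots,1)$ and $u$ in a fixed finite subset of $G_\O/K$. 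Conjugation by $\varpi^\lambda$ with $\lambda$ drifting to infinity inside the dominant chamber absorbs ever-growing pieces of the upper-triangular unipotent radical of $G_\O$ into $ZG_\O \cap gKg^{-1}$; combined with smoothness and finite-dimensionality of $V$, this forces $V^{ZG_\O \cap gKg^{-1}} = 0$ for all but finitely many such $\lambda$. This finiteness is the standard compact-mod-center admissibility argument (cf.\ \cite{Vig96}), and it is the one nontrivial technical input.

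Putting the two steps together yields admissibility of $\pm[H^\bullet_c(\dot{X}, \overline{\Q}_\ell)_\theta]$. I remark that the general-position hypothesis on $\theta$ does not actually enter the admissibility argument itself; it is needed only for the sharper statements (irreducibility, supercuspidality) proved in \cite{Iva19} using this admissibility as input.
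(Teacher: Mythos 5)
The paper does not actually prove this proposition; it is imported verbatim from \cite[Theorem 5.1]{Iva19}, so your proposal must stand on its own. Its skeleton (reduce to admissibility of $\cInd_{ZG_{\O}}^G V$ for the finite-dimensional finite-level class, then analyze $(\cInd_{ZG_{\O}}^G V)^K$ via Mackey and the Cartan decomposition) is indeed the skeleton of Ivanov's argument. But the step you dismiss as ``the standard compact-mod-center admissibility argument'' is precisely the non-formal content, and as stated it is false. Compact induction from $ZG_{\O}$ of an arbitrary finite-dimensional smooth representation $V$ is \emph{not} admissible: take $V=\mathbf{1}$ (extended by a central character) and $G=\mathbf{GL}_2(K)$; then $(\cInd_{ZG_{\O}}^G\mathbf{1})^{G_{\O}}$ has a basis indexed by $ZG_{\O}\backslash G/G_{\O}\cong\Z_{\geq 0}$ and is infinite-dimensional. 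The mechanism you describe — conjugation by $\varpi^{\lambda}$ for $\lambda$ deep in the dominant chamber forces $ZG_{\O}\cap gKg^{-1}$ to contain larger and larger pieces of a unipotent radical of $G_{\O}$ — is correct, but it only yields $V^{ZG_{\O}\cap gKg^{-1}}=0$ if $V$ has no nonzero vectors fixed by those unipotent subgroups, i.e.\ if the finite-level representation $[H^{\bullet}_c(\dot{X}_h,\overline{\Q}_\ell)_{\theta}]$ is cuspidal in the appropriate sense. That vanishing is a genuine geometric/representation-theoretic input about Deligne--Lusztig varieties, and it is exactly where the general-position hypothesis on $\theta$ enters. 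Your closing remark that general position ``does not actually enter the admissibility argument'' therefore has it backwards: without it the isotypic part can contain constituents with invariants under the absorbed unipotent subgroups, infinitely many Mackey summands survive, and admissibility fails.

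To repair the argument you would need to supply the cuspidality statement for $[H^{\bullet}_c(\dot{X}_h,\overline{\Q}_\ell)_{\theta}]$ (in \cite{Iva19} this is established by an explicit computation on the truncated space, using that $w$ is Coxeter and $\theta$ is in general position), and only then conclude finiteness of the nonzero terms in your double-coset decomposition. The first paragraph of your reduction (exactness of the isotypic functor in characteristic zero, exactness of $\cInd_{ZG_{\O}}^G$, passing to differences in $\G_0$) is fine.
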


We now make the promised discussion; we also address the question of integrality. 
\begin{proposition}\label{proposition: finite length of H(X, Ql)}
For any $\theta \in \mathscr{X}(L, \overline{\Q}_\ell)^{\integral}$ we have a well-defined element
$$[H^\bullet_c(\dot{X}, \overline{\Q}_\ell)_\theta] \in \G_0(G, \overline{\Q}_\ell)^{\integral}.$$ 
\end{proposition}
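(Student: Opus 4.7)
The plan is to combine the admissibility input from Proposition \ref{theorem: admissibility of construction} with an integrality argument performed directly on the Rickard representatives at finite level.

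First I would establish well-definedness as an element of $\G_0(G, \Ql)$. Construction \ref{construction: cohomology of the whole space} gives $\RG_c(\dot{X}, \Ql)_\theta \cong \cInd_{ZG_\O}^G \RG_c(\dot{X}_h, \Ql)_\theta$ inside $\D^b(G, \Ql)$ for any $h \geq \lvl(\theta)$; semisimplicity of $\rep(T_h, \Ql)$ makes the derived and non-derived isotypic parts agree, matching Lemma \ref{lemma: derived and nonderived isotypic parts}. Lemma \ref{lemma: finite type of cohomology of the whole space} grants finite type, while Proposition \ref{theorem: admissibility of construction} supplies admissibility of the Euler characteristic. The conjunction of finite type and admissibility forces finite length in the $p$-adic reductive setting over characteristic zero coefficients (a standard consequence of Bernstein's theory, cf.\ \cite[\S II.5]{Vig96}), so $[H^\bullet_c(\dot{X}, \Ql)_\theta]$ is well-defined in $\G_0(G, \Ql)$.

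Next I would construct an integral structure at finite level. Fix $h \geq \lvl(\theta)$. By Proposition \ref{proposition: T_h torsor} the action of $T_h$ on $\dot{X}_h$ is free, so Lemma \ref{corollary: rickards complex for finite group actions} shows that $\GG_c(\dot{X}_h, \Zl)$ has finitely generated projective $\Zl[T_h]$-module terms. Integrality of $\theta$ combined with smoothness forces $\theta|_{T_h}$ to land in the torsion part of $\Zl^\times$, so the tensor product $\GG_c(\dot{X}_h, \Zl) \otimes_{\Zl[T_h]} \theta|_{T_h}$ is a complex of finite free $\Zl$-modules with commuting $G_h$-action. After base change to $\Ql$ this recovers $\GG_c(\dot{X}_h, \Ql)_\theta$ by Property \ref{properties: rickards complex}-\eqref{property: compatibility with additive functors}, providing a $G_h$-stable $\Zl$-lattice in every cohomology group $H^i_c(\dot{X}_h, \Ql)_\theta$.

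Finally I would propagate integrality through the remaining steps of Construction \ref{construction: cohomology of the whole space}. Inflation along $G_\O \twoheadrightarrow G_h$ preserves the lattice. Letting $Z = K^\times$ act via $\theta$ preserves it because $\theta(Z) \subseteq \Zl^\times$ by integrality of $\theta$, as recorded in Example \ref{example:T}. Compact induction from the clopen subgroup $ZG_\O$ preserves integrality by Lemma \ref{lemma:induction and integrality}. Together with Lemma \ref{lemma: integral ses}, this places $[H^\bullet_c(\dot{X}, \Ql)_\theta]$ inside $\G_0(G, \Ql)^{\integral}$.

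The main obstacle is the well-definedness step: passing from admissibility of the Euler characteristic and finite type of its representative to finite length of each individual cohomology group. Once this standard input is granted, the integrality assertion reduces to tracking a single lattice through the four layers of Construction \ref{construction: cohomology of the whole space}, with Rickard's complex providing the starting lattice and the integrality of $\theta$ ensuring that each subsequent functor preserves it.
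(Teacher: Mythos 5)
Your proposal is correct and follows the same overall structure as the paper's proof (finite length via admissibility plus finite type; integrality tracked through the layers of Construction \ref{construction: cohomology of the whole space} and Lemma \ref{lemma:induction and integrality}). Two remarks on where you diverge.

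For the integrality at finite level, you construct an explicit lattice by tensoring Rickard's complex $\GG_c(\dot{X}_h, \Zl)$ with $\theta|_{T_h}$. This works, but it is more machinery than needed: $G_h$ is a finite group, so every finite-dimensional $G_h$-representation over $\Ql$ is automatically integral (this is the profinite case recorded right after Definition \ref{integralGrot}). The paper simply invokes this and moves on, reserving the Rickard lattice argument for the later comparison Lemma \ref{lemma: comparison of isotypic parts} where it is genuinely needed to match the characteristic-$\ell$ side.

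For finite length, you appeal to admissibility of the Euler characteristic directly. This is essentially right, but one should be a bit careful: admissibility is a property of a representation, while $[H^\bullet_c(\dot{X}, \Ql)_\theta]$ is a priori only a formal alternating sum. The paper resolves this by first citing \cite[Corollary 3.3]{Iva19} to ensure that the finite level class $[H^\bullet_c(\dot{X}_h, \Ql)_\theta]$ is, up to sign, a single genuine irreducible representation; then $\cInd_{ZG_\O}^G$ of that representation is a genuine representation to which admissibility (Proposition \ref{theorem: admissibility of construction}) and the finite type $+$ admissible $\Rightarrow$ finite length criterion (\cite[II.5.10]{Vig96}) apply cleanly. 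If you intend to skip the intermediate irreducibility step, you should at least note that Proposition \ref{theorem: admissibility of construction} already asserts $\pm[H^\bullet_c(\dot{X}, \Ql)_\theta]$ is a genuine admissible representation, which licenses the argument.
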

\begin{proof}
Denote $h = \lvl(\theta)$, so that $[H^\bullet_c(\dot{X}, \overline{\Q}_\ell)_\theta] = \cInd_{ZG_{\O}}^G [H^\bullet_c(\dot{X}_h, \overline{\Q}_\ell)_\theta]$ holds by definition. We want to prove that this lands in finite length representations, and that it is integral.

Let us first discuss why $[H^\bullet_c(\dot{X}, \overline{\Q}_\ell)_\theta]$ is of finite length. By Lemma \ref{lemma: finite type of cohomology of the whole space} we already know that $[H^\bullet_c(\dot{X}, \overline{\Q}_\ell)_\theta]$ is of finite type. 
We now use results of \cite{Iva19}, which are valid under the assumption that $\theta$ lies in general position. By \cite[Corollary 3.3]{Iva19}, the finite level class $[H^\bullet_c(\dot{X}_h, \Ql)_\theta]$ is up to sign an irreducible representation, which we call $V$ for the moment. By Proposition \ref{theorem: admissibility of construction}, $\cInd_{ZG_O}^G V$ is a finite direct sum of irreducible supercuspidal representations, i.e. it is admissible.
Now by \cite[II.5.10]{Vig96} admissibility and finite type together imply finite length, as desired.

Secondly, we address the integrality of $H^\bullet_c(\dot{X}, \overline{\Q}_\ell)_\theta$. Again, $H^\bullet_c(\dot{X}_h, \overline{\Q}_\ell)_\theta$ is obviously an integral $G_h$-representation. 
The inflation to $G_{\O}$ clearly preserves integrality. Also prolonging to $ZG_{\O}$ by $\theta$ doesn't hurt the choice of integral structure by the integrality assumption on $\theta$. Finally, $\cInd^{G}_{ZG_{\O}}$ sends an integral structure of the original representation to an integral structure of the induced representation by the clopeness of $ZG_{\O}$ and Lemma \ref{lemma:induction and integrality}.
\end{proof}

\subsubsection{Characteristic \texorpdfstring{$\ell$}{l} coefficients}\label{section: comparison of isotypic parts}
\label{subsubsection: characteristic ell coefficients}

We now show that Construction \ref{construction: euler characteristic of the whole space} applies for $[H^{\bullet}_c(\dot{X}, \Fl)_{\eL \psi}]$ with $\psi$ in general position. At the same time, we check that this is given by $r_\ell [H^\bullet_c(\dot{X}, \overline{\Q}_\ell)_{\theta}]$ for any lift $\theta$ of $\psi$.

\begin{lemma}\label{Tsum}
\label{lemma: comparison of isotypic parts}
Let $\psi \in \mathscr{X}(L, \Fl)$ and $\theta \in \mathscr{X}(L, \overline{\Q}_\ell)^{\integral} $ one of its lifts under $r_\ell$. Then
\begin{equation*}
 r_\ell [H^\bullet_c(\dot{X}, \overline{\Q}_\ell)_{\theta}] = [H^\bullet_c(\dot{X}, \overline{\F}_\ell)_{\eL \psi}] \in \G_0(G, \overline{\F}_\ell).   
\end{equation*}
\end{lemma}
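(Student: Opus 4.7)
The plan is to chase $r_\ell$ through Construction \ref{construction: euler characteristic of the whole space}, which presents $[H^\bullet_c(\dot{X}, \Lambda)_{\eL \theta}]$ as a composition of three operations, each separately compatible with $r_\ell$. Along the way the argument also promotes the right-hand side from an a priori merely finite-type class to a bona fide element of the Grothendieck group $\G_0(G, \Fl)$ of finite-length representations.

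First we pick $h \geq \lvl(\psi)$. Since $r_\ell$ is an isomorphism on $\Irr(T^1_h, -)$ and $\theta|_{T_\O}$ reduces to $\psi|_{T_\O}$, Lemma \ref{lemma: counting lifting characters} also yields $h \geq \lvl(\theta)$. At this finite level Corollary \ref{corollary: equivariant euler characterictic and reduction} directly gives
$$r_\ell [H^\bullet_c(\dot{X}_h, \Ql)_\theta] = [H^\bullet_c(\dot{X}_h, \Fl)_{\eL \psi}] \in \G_0(G_h, \Fl).$$
Inflating to $G_\O$ along $G_\O \twoheadrightarrow G_h$ and then extending to $ZG_\O$ by letting $Z$ act through $\theta$ resp. $\psi$ are exact operations which do not alter the underlying $\Lambda$-module; since $\theta|_Z$ integrally reduces to $\psi|_Z$ by hypothesis, both steps commute with $r_\ell$ on the nose, and the displayed equality persists in $\G_0(ZG_\O, \Fl)$.

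Finally we apply compact induction $\cInd_{ZG_\O}^G$. By Proposition \ref{proposition: finite length of H(X, Ql)}, the class $\cInd_{ZG_\O}^G [H^\bullet_c(\dot{X}_h, \Ql)_\theta] = [H^\bullet_c(\dot{X}, \Ql)_\theta]$ is of finite length and integral, so the hypotheses of Lemma \ref{lemma: cind and rl} are satisfied. Remark \ref{remark: cind and rl} then makes $\cInd_{ZG_\O}^G r_\ell [H^\bullet_c(\dot{X}_h, \Ql)_\theta]$ automatically into a well-defined class in $\G_0(G, \Fl)$: on the one hand it equals $r_\ell \cInd_{ZG_\O}^G [H^\bullet_c(\dot{X}_h, \Ql)_\theta] = r_\ell [H^\bullet_c(\dot{X}, \Ql)_\theta]$, and on the other, by Construction \ref{construction: euler characteristic of the whole space}, it is $[H^\bullet_c(\dot{X}, \Fl)_{\eL \psi}]$. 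Comparing, the desired identity follows.

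The main difficulty is not conceptual but bookkeeping, since each step in the pipeline is already known to commute with $r_\ell$. The one delicate point is that $[H^\bullet_c(\dot{X}, \Fl)_{\eL \psi}]$ is not known a priori to be a finite-length class from any argument on the $\Fl$ side; this finiteness must be bootstrapped from the characteristic-zero counterpart through Proposition \ref{proposition: finite length of H(X, Ql)} and Remark \ref{remark: cind and rl}, so that the proof simultaneously upgrades Construction \ref{construction: euler characteristic of the whole space} to produce a genuine element of $\G_0(G, \Fl)$.
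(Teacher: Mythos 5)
Your proof is correct and follows essentially the same route as the paper's: establish the finite-level equality via Corollary \ref{corollary: equivariant euler characterictic and reduction}, carry it through inflation and central extension, and then use Lemma \ref{lemma: cind and rl} together with Proposition \ref{proposition: finite length of H(X, Ql)} both to commute $r_\ell$ past $\cInd_{ZG_{\O}}^G$ and to bootstrap the finite-length status of the $\Fl$ side. You also correctly isolate the subtle point (finiteness on the $\Fl$ side is not a priori known and must come from the $\Ql$ side), which the paper makes in the same way.
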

\begin{proof}
By Lemma \ref{Tlifts} and subsequent discussion, $\psi|_{T_{\O}}$ has precisely $\ell^m$ lifts under $r_\ell$, each of which can be non-uniquely extended to a desired lift $\theta$ from the statement. Such a lift has the same level $h = \lvl(\psi) = \lvl(\theta)$.
At this finite level, Corollary \ref{corollary: equivariant euler characterictic and reduction} yields an equality
$$r_\ell [H^\bullet_c(\dot{X}_h, \overline{\Q}_\ell)_{\theta}] = [H^\bullet_c(\dot{X}_h, \overline{\F}_\ell)_{\eL \psi}].$$

Now we go through Construction \ref{construction: euler characteristic of the whole space} of the Euler characteristic of $\dot{X}$. First we inflate to $G_{\O}$; this operation is additive and commutes with $r_\ell$. Then we extend to $ZG_{\O}$-representations by letting $Z$ act via $\psi$; this is again additive and can be commuted through $r_\ell$ to the action of $\theta$ on the left-hand side. 

Finally, we apply $\cInd_{ZG_{\O}}^G$ to both sides; this is additive and commutes with $r_{\ell}$. More precisely, Lemma \ref{lemma: cind and rl} simultaneously shows that Construction \ref{construction: euler characteristic of the whole space} applies to $[H^\bullet_c(\dot{X}, \overline{\F}_\ell)_{\eL \psi}]$ and that
\begin{equation*}
r_\ell [H^\bullet_c(\dot{X}, \overline{\Q}_\ell)_{\theta}] = [H^\bullet_c(\dot{X}, \overline{\F}_\ell)_{\eL \psi}].    
\end{equation*}
Indeed, during the entire computation, we stayed in the Grothendieck groups of finite length integral representations. Up to the final step, this is clear by the choice of $\theta$ and finite dimensionality of the representations. Finally, $[H^\bullet_c(\dot{X}, \overline{\Q}_\ell)_{\theta}]$ is integral of finite length by Proposition \ref{proposition: finite length of H(X, Ql)}. In particular, the application of Lemma \ref{lemma: cind and rl} is correct for the $ZG_{\O}$-representation constructed above;
the reduction maps $r_\ell$ indeed land in the correct targets by Discussion \ref{reductionmodulolitems} (iii) resp. (ii). 
\end{proof}

\subsection{Realization of the characteristic zero local Langlands correspondence}\label{section: realization of the local langlands correspondence}
We briefly summarize the main results of \cite{Iva18, Iva19, Iva20a, Iva20b} regarding the relationship of the isotypic parts $[H^\bullet_c(\dot{X}, \Ql)_\theta]$ to the local Langlands correspondence with $\Ql$-coefficients.

Let $\theta \in \Irr(T, \overline{\Q}_\ell)$. The Howe decomposition asserts the existence of a unique tower of subfields $K = L_0 \subsetneq L_1 \subsetneq \dots \subsetneq L_{t-1} \subsetneq L_t = L$ such that 
$$\theta = (\theta_0 \circ N_{L/L_0})\cdot (\theta_1 \circ N_{L/L_1}) \cdots (\theta_0 \circ N_{L/L_{t-1}})\cdot (\theta_t)$$
for some characters $\theta_i: L_i^\times \to \overline{\Q}_\ell$ of unique levels.
From this, we get numerical invariants of $\theta$. Regarding $\theta$ as an element of $\Irr(T_h, \overline{\Q}_\ell)$ for a fixed $h \geq \lvl (\theta)$, \cite[Theorem 6.1.1]{Iva20a} defines an integer $\cd(\theta)$ (called $r_{\chi}$ there) by an explicit formula dependent only on the group $G$ and $\theta|_{T^1}$.
\begin{proposition}\label{Tdegree}
The function $\cd: \Irr(T_h, \overline{\Q}_\ell) \to \Z$ factors through the reduction $r_\ell: \Irr(T_h, \overline{\Q}_\ell) \to  \Irr(T_h, \overline{\F}_\ell)$. 
We use the same notation $\cd: \Irr(T_h, \overline{\F}_\ell) \to \Z$ for this induced map.
\end{proposition}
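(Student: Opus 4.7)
The plan is to observe that the claim reduces to a statement already established in Section \ref{Tsec -- prof}, namely that the reduction map on characters of the pro-$p$ subgroup $T^1_h$ is a bijection. The key input from \cite{Iva20a} is the explicit dependence: by construction, $\cd(\theta)$ is determined by $\mathbf{G}$ together with the restriction $\theta|_{T^1_h}$. Thus the factorization through $r_\ell$ will follow once we check that any two characters of $T_h$ with the same reduction modulo $\ell$ agree on $T^1_h$.

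To verify this, I would recall from \S \ref{Tsec -- prof} that the truncation $T^1_h$ is a pro-$p$ group of order $q^{n(h-1)}$, which is coprime to $\ell$. By Example \ref{remark: fibers of the reduction for abelian groups}, the fibers of $r_\ell: \Irr(T^1_h, \overline{\Q}_\ell) \to \Irr(T^1_h, \overline{\F}_\ell)$ have cardinality equal to the number of elements of $\ell$-power order in $T^1_h$, which is $1$. Hence this restricted reduction map is an isomorphism, as was already remarked in the paragraph preceding Lemma \ref{Tlifts}.

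Now suppose $\theta, \theta' \in \Irr(T_h, \overline{\Q}_\ell)$ satisfy $r_\ell(\theta) = r_\ell(\theta')$. Restricting to $T^1_h$ and using that restriction commutes with $r_\ell$, we obtain $r_\ell(\theta|_{T^1_h}) = r_\ell(\theta'|_{T^1_h})$, and the bijectivity just recalled forces $\theta|_{T^1_h} = \theta'|_{T^1_h}$. The input from \cite[Theorem 6.1.1]{Iva20a} then gives $\cd(\theta) = \cd(\theta')$, so $\cd$ factors through $r_\ell$. The induced function $\cd: \Irr(T_h, \overline{\F}_\ell) \to \Z$ is obtained by sending $\psi$ to $\cd(\theta)$ for any lift $\theta$ under $r_\ell$; such a lift exists by Lemma \ref{Tlifts}, and the value is independent of the choice by the factorization just proven. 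There is no genuine obstacle here, since the argument is a direct combination of the definition of $\cd$ with the pro-$p$ nature of $T^1_h$; the only point that must be stated carefully is that restriction to $T^1_h$ commutes with the reduction map $r_\ell$, which is immediate from the construction in Construction \ref{constr}.
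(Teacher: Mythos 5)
Your proof is correct and takes essentially the same approach as the paper: both arguments reduce the claim to the fact that any two lifts of a given $\overline{\F}_\ell$-character under $r_\ell$ agree upon restriction to $T^1$ (equivalently $T^1_h$), which holds because $T^1_h$ has order coprime to $\ell$ so that reduction on $\Irr(T^1_h, -)$ is a bijection by Example \ref{remark: fibers of the reduction for abelian groups}, and then invoke that $\cd$ depends only on this restriction by \cite[Theorem 6.1.1]{Iva20a}. The paper's proof phrases this via the split exact sequence of \S\ref{Tsec -- prof} (``lifts differ at most by a character of $T_1$''), but the content is identical.
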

\begin{proof}
By the discussion in \S \ref{Tsec -- prof}, the lifts of given $\psi \in \Irr(T, \overline{\F}_\ell)$ along $r_\ell$ differ at most by a character of $T_1 = \O^\times_L/T^1$, i.e. the restrictions of all these lifts to $T^1$ are equal. The statement now follows from the definition of $\cd$ discussed above.
\end{proof}

\begin{theorem}[\cite{Iva19}]\label{Iva}
\label{theorem: ivanov realization of local langlands}
Assume $p > n$. Let $\theta \in \mathscr{X}(L, \overline{\Q}_\ell)^{\sg}$. Then $M(\theta) := (-1)^{\cd(\theta)} \cdot [H^{\bullet}_c(\dot{X}, \overline{\Q}_\ell)_\theta]$ is an irreducible supercuspidal representation of $G$ and the association
$\sigma(\theta) \leftrightarrow M(\theta)$
is a partial realization of the local Langlands correspondence.
\end{theorem}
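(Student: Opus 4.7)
The plan is to follow the strategy of classical Deligne--Lusztig theory, adapted to the $p$-adic setting via the finite-level constructions already set up in this paper. Reduce the statement by Construction \ref{construction: euler characteristic of the whole space} to the finite-level question about $[H^\bullet_c(\dot{X}_h, \overline{\Q}_\ell)_\theta]$ for $h = \lvl(\theta)$, and then inflate along $G_{\O} \twoheadrightarrow G_h$, extend to $ZG_{\O}$ by the central character $\theta|_Z$, and finally compactly induce to $G$.

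At finite level, I would show that $V_h := (-1)^{\cd(\theta)} \cdot [H^\bullet_c(\dot{X}_h, \overline{\Q}_\ell)_\theta]$ is the class of a single irreducible representation of $G_h$. The main tool is a Grothendieck--Lefschetz inner-product computation in the spirit of \cite{DL76},
\[
\bigl\langle [H^\bullet_c(\dot{X}_h)_\theta],\; [H^\bullet_c(\dot{X}_h)_{\theta'}] \bigr\rangle_{G_h} \;=\; \sum_{g \in \Gal(L/K):\; g\cdot\theta|_{T^1} = \theta|_{T^1}} \langle \theta,\; g \cdot \theta' \rangle_{T_h},
\]
performed on the $h$-th truncation. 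I would reduce from $h$ down to $h=1$ iteratively by means of the affine fibration of Proposition \ref{proposition: affine fibration} (and its compatibility with the $T_h$-action), and then invoke the classical Deligne--Lusztig computation at $h = 1$. The hypothesis $\theta \in \mathscr{X}(L, \overline{\Q}_\ell)^{\sg}$ makes the sum on the right collapse to a single term when $\theta = \theta'$ and to zero for non-Galois-conjugate $\theta'$, yielding both irreducibility and disjointness. The sign $(-1)^{\cd(\theta)}$ records the concentration degree of the cohomology, computed from an explicit dimension formula.

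To promote $V_h$ to the $p$-adic group $G$, I would verify the Bushnell irreducibility criterion for $\cInd^G_{ZG_{\O}}$: for every $g \in G \setminus ZG_{\O}$ one needs $\Hom_{ZG_{\O} \cap g ZG_{\O} g^{-1}}(V_h, {}^g V_h) = 0$. Strongly general position of $\theta$ rules out nontrivial intertwinings coming from Weyl-type conjugations, again via the same finite-level inner-product formula applied to $\theta$ against its $G$-conjugates. Supercuspidality of $\cInd^G_{ZG_{\O}} V_h$ is then automatic because $ZG_{\O}$ is open and compact modulo the center.

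For the match $\sigma(\theta) \leftrightarrow M(\theta)$ with local Langlands, I would appeal to Bushnell--Henniart's essentially tame correspondence: the rectifier $\mu$ in the definition of $\sigma$ in \eqref{definition: weil induction of characters} is tailored precisely so that the supercuspidal type $(ZG_{\O}, V_h)$ attached to $\theta$ corresponds to $\cInd_{\W_L}^{\W_K}(\mu \cdot \theta)$ on the Weil side. The hardest step is the finite-level inner-product computation for $h \geq 2$, which is substantially more delicate than the classical $h = 1$ case and requires careful control of the interplay of the $T^1$-filtration with the affine fibration and the $T_h$-action; the second nontrivial point is pinning down the rectifier so that the output matches the explicit essentially tame recipe.
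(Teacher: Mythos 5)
The paper does not re-derive this result at all: it simply cites \cite[Theorem~A]{Iva19} for the irreducibility, supercuspidality, and the match with the Langlands correspondence, and the only content the paper itself supplies is the identification of the sign as $(-1)^{\cd(\theta)}$. That identification is done by a chain of citations: $\cd(\theta)$ is defined in \cite[Theorem~6.1.1]{Iva20a} as the unique nonzero degree of the $\theta|_{T^1}$-isotypic cohomology of a closed subscheme $X^1_h \subseteq \dot{X}_h$; this is transported to a larger closed subscheme $X_{h,n'}$; and then \cite[Corollary~4.2]{Iva19} gives an equality of Euler characteristics $[H^\bullet_c(X_{h,n'}, \overline{\Q}_\ell)_\theta] = [H^\bullet_c(\dot{X}_h, \overline{\Q}_\ell)_\theta]$ in $\G_0(G_h, \overline{\Q}_\ell)$.

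Your proposal takes a genuinely different route: rather than citing, you sketch the internal architecture of Ivanov's proof itself — the Deligne--Lusztig-type inner-product formula at finite level, reduction from level $h$ to $h-1$ via the affine fibration of Proposition~\ref{proposition: affine fibration}, Bushnell's irreducibility criterion for $\cInd^G_{ZG_{\O}}$, automaticity of supercuspidality from compactness of $ZG_{\O}$ modulo center, and the match with Langlands via the essentially tame correspondence and the rectifier. This is the right conceptual picture of what is happening inside \cite{Iva19}, and seeing it spelled out is useful. But as a proof it has two problems. First, and most seriously, you explicitly defer the hardest step — the finite-level scalar product formula for $h \geq 2$ and the control of the interplay between the $T^1$-filtration and the affine fibration — which is precisely the nontrivial technical heart of \cite{Iva19, Iva20a} and cannot be treated as a routine adaptation of the $h=1$ case. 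Your inner-product identity as written is not obviously the correct statement (the stabilizer condition on the right-hand side is imposed on $\theta|_{T^1}$ alone, and the pairing $\langle\theta, g\cdot\theta'\rangle_{T_h}$ then plays a somewhat redundant role); I would want to see this derived rather than asserted. Second, your account of the sign (``records the concentration degree, computed from an explicit dimension formula'') is too loose: the paper needs the specific formula of \cite[Theorem~6.1.1]{Iva20a} for $\cd$, which depends only on $\theta|_{T^1}$ — a fact used crucially later in Proposition~\ref{Tdegree} so that $\cd$ descends through $r_\ell$ — and the identification of that concentration degree with $\dot{X}_h$ requires the intermediate schemes $X^1_h$, $X_{h,n'}$ and a nontrivial cohomological comparison.

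In short, you are re-proving a result the paper treats as a black-box citation. That is not wrong per se, but the write-up should either give the full inner-product argument or cite it; as it stands, the essential difficulty is flagged but not resolved.
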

\begin{proof}
Up to the explicit determination of the sign $\pm[H^{\bullet}_c(\dot{X}, \overline{\Q}_\ell)_\theta]$, this is \cite[Theorem A]{Iva19}.  

The sign is discussed in the course of \cite[\S 7.1]{Iva19}. By \cite[Theorem 6.1.1]{Iva20a}, we get the definition of $\cd$, which returns the single nonzero degree of the $\theta|_{T^1}$-isotypic component of the cohomology of certain closed subscheme $X^1_h$ of our $\dot{X}_h$. Therefore it gives also the single nonzero degree of the $\theta$-isotypic component of another closed subscheme $X_{h, n'}$, which is given by a finite disjoint union of copies of $X^1_h$, see \cite[\S 2]{Iva20a}. 
By \cite[Corollary 4.2]{Iva19}, there is an identification $[H^\bullet_c(X_{h, n'}, \overline{\Q}_\ell)_\theta] = [H^\bullet_c(\dot{X}_h, \overline{\Q}_\ell)_\theta]$ inside $\G_0(G_h, \overline{\Q}_\ell)$ and consequently $(-1)^{\cd (\theta)}\cdot [H^\bullet_c(\dot{X}_h, \overline{\Q}_\ell)_\theta]$ is indeed an (irreducible) representation; the same sign then appears for cohomology of the whole $\dot{X}$ by design.
\end{proof}

\begin{remark}
It is expected that this holds for any $\theta \in \mathscr{X}(L, \overline{\Q}_\ell)$, i.e. after replacing ``strongly general position" by ``general position".
\end{remark}

\subsection{Main result}
\label{section: main result}
We are now ready to piece everything together and deduce our main result.

\begin{theorem}\label{theorem: upshot}
Assume $p > n$. Let $\psi \in \mathscr{X}(L, \Fl)^{\sg}$. Then:
\begin{itemize}
\item[(1)] the Weil representation $\sigma(\psi)$ is irreducible,
\item[(2)] the representation $M(\psi):=(-1)^{\cd(\psi)} \cdot [H^\bullet_c(X, \Fl)_{\eL \psi}]$ is irreducible supercuspidal, 
\item[(3)] the association $\sigma(\psi) \longleftrightarrow M(\psi)$ partially realizes the modular local Langlands correspondence.
\end{itemize} 
\end{theorem}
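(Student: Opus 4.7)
The plan is to deduce the theorem by lifting $\psi$ to a character $\theta \in \mathscr{X}(L, \overline{\Q}_\ell)^{\sg, \integral}$ along $r_\ell$, transporting Ivanov's characteristic zero realization (Theorem \ref{theorem: ivanov realization of local langlands}) to that $\theta$, and then reducing everything modulo $\ell$ using the compatibilities assembled in the body of the paper. In other words: we want Diagram \ref{diagram} to commute after specializing to $\psi$ on the right and to a suitable lift $\theta$ on the left; once this is established, irreducibility properties and the matching under local Langlands transfer automatically.

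First I would invoke Lemma \ref{lemma: lifting characters} to choose any integral lift $\theta \in \mathscr{X}(L, \overline{\Q}_\ell)^{\integral}$ of $\psi$ with $\theta|_{T_{\O}}$ equal to the Teichmüller lift of $\psi|_{T_{\O}}$. By Lemma \ref{Tposition}, $\theta$ lies in $\mathscr{X}(L, \overline{\Q}_\ell)^{\sg}$, so Theorem \ref{theorem: ivanov realization of local langlands} applies and gives that
\[
M(\theta) := (-1)^{\cd(\theta)} \cdot [H^\bullet_c(\dot{X}, \overline{\Q}_\ell)_\theta]
\]
is an irreducible supercuspidal representation of $G$ realizing local Langlands in the sense that $M(\theta) \leftrightarrow \sigma(\theta)$.

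Next I would apply the reduction map $r_\ell$ to both sides. On the Weil group side, Proposition \ref{proposition: weil induction} ensures $\sigma(\psi)$ is well-defined and irreducible (this already proves (1)), and Lemma \ref{lemma: weil induction and reduction} gives $r_\ell \sigma(\theta) = \sigma(\psi)$ in $\mathscr{G}^0_n(K, \overline{\F}_\ell)$. On the supercuspidal side, Lemma \ref{lemma: comparison of isotypic parts} gives $r_\ell [H^\bullet_c(\dot{X}, \overline{\Q}_\ell)_\theta] = [H^\bullet_c(\dot{X}, \overline{\F}_\ell)_{\eL \psi}]$, while Proposition \ref{Tdegree} ensures $\cd(\theta) = \cd(\psi)$, so the signs match and $r_\ell M(\theta) = M(\psi)$. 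Since $M(\theta) \in \mathscr{A}^0_n(K, \overline{\Q}_\ell)^{\ell\text{-scu}}$ and $\sigma(\theta) \in \mathscr{G}^0_n(K, \overline{\Q}_\ell)^{\ell\text{-irr}}$, Vignéras's Theorem \ref{theorem: vigneras} asserts that $r_\ell M(\theta) = M(\psi)$ is an irreducible supercuspidal representation of $G$ (giving (2)) and that the resulting correspondence $M(\psi) \leftrightarrow \sigma(\psi)$ is compatible with the modular local Langlands bijection (giving (3)).

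The main conceptual point is really the bookkeeping: one must check that $\theta$ lies in $\mathscr{A}^0_n(K, \overline{\Q}_\ell)^{\ell\text{-scu}}$ (equivalently, $\sigma(\theta) \in \mathscr{G}^0_n(K, \overline{\Q}_\ell)^{\ell\text{-irr}}$) so that Vignéras's diagram can be invoked on our classes. This follows because $r_\ell M(\theta)$ is, by Lemma \ref{lemma: comparison of isotypic parts}, a well-defined class in $\G_0(G, \overline{\F}_\ell)$ whose class is $\pm M(\psi)$, and because $r_\ell \sigma(\theta) = \sigma(\psi)$ is irreducible by (1); thus the integral cuspidals under consideration automatically fall in the $\ell$-supercuspidal locus where Theorem \ref{theorem: vigneras} applies. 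No further delicate step is needed beyond collating the compatibilities already proved.
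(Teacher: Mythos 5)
Your proposal is correct and follows the same route as the paper's own proof: lift $\psi$ to an integral strongly-general $\theta$ via Lemma \ref{lemma: lifting characters} and Lemma \ref{Tposition}, apply Theorem \ref{theorem: ivanov realization of local langlands} to $\theta$, then push both sides through $r_\ell$ using Lemma \ref{lemma: weil induction and reduction}, Lemma \ref{lemma: comparison of isotypic parts}, and Proposition \ref{Tdegree}, and finally invoke Theorem \ref{theorem: vigneras}. The bookkeeping point you flag at the end -- that $\sigma(\theta)$ lands in the $\ell$-irreducible locus because its reduction $\sigma(\psi)$ is irreducible, so Vignéras's diagram transports this to the $\ell$-supercuspidal side -- is exactly the step the paper uses as well.
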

\begin{proof}
Fix an integral lift $\theta \in \Irr(T, \Ql)^{\sg , \integral}$ of $\psi \in \Irr(T, \Fl)^{\sg}$ as in Lemma \ref{lemma: lifting characters}; such $\theta$ lies in strongly general position by Lemma \ref{lemma: strongly general position and reduction}.
As $\psi$ in particular lies in general position, Proposition \ref{proposition: weil induction} shows that $\sigma(\psi)$ is irreducible. By Lemma \ref{lemma: weil induction and reduction} we have $r_\ell(\sigma(\theta)) = \sigma(\psi)$.

Using the assumption $p>n$, Theorem \ref{Iva} applies to the strongly general character $\theta$. Thus $M(\theta) := \pm [H^\bullet_c(X, \Ql)_{\theta}]$ is an irreducible supercuspidal representation of $G$ and the sign identifies as $(-1)^{\cd(\theta)}$. By Lemma \ref{lemma: comparison of isotypic parts}, reduction modulo $\ell$ returns the modular representation in question:
\begin{equation}\label{equation: reduction}
r_\ell \left( M(\theta) \right) = (-1)^{\cd(\theta)} \cdot r_\ell [H^\bullet_c(X, \Ql)_{\theta}] = (-1)^{\cd(\psi)} \cdot [H^\bullet_c(X, \Fl)_{\eL \psi}] = M(\psi).
\end{equation}
Moreover, still by Theorem \ref{theorem: ivanov realization of local langlands}, the association
$$\sigma(\theta) \mapsfrom \theta \mapsto M(\theta) = (-1)^{\cd(\theta)} \cdot [H^\bullet_c(\dot{X}, \Ql)_{\theta}]$$
realizes the local Langlands correspondence over $\overline{\Q}_\ell$.

Now Theorem \ref{theorem: vigneras} applies: since $\sigma(\theta)$ is sent by the local Langlands correspondence to $M(\theta)$, its irreducible reduction $\sigma(\psi)$ is sent by the modular local Langlands correspondence to $r_\ell (M(\theta))$, which is thus irreducible supercuspidal. 
Since $r_\ell (M(\theta)) = M(\psi)$ by \eqref{equation: reduction}, the proof is finished.
\end{proof}

\subsection{A version with multiplicities}
\label{section: a version with multiplicities}

In this complementary section, we present a variation on Theorem \ref{theorem: upshot} coming from a naive definition of isotypic parts. The proof of this version goes along the same lines as the proof of Theorem \ref{theorem: upshot}.

When $\ell \nmid |T_1|$, the category $\rep(T_h, \Lambda)$ is semisimple and these naive isotypic parts coincide with the derived ones. When $\ell \mid |T_1|$, an extra multiplicity $\ell^m$ appears in our theorem. This discrepancy reveals that the complexes $\GG_c(\dot{X}_h, \Fl)$ contain interesting extensions. The reader may compare this with the results of \cite{BR02, BDR16}. 

\begin{notation}
Let $\Lambda$ be an algebraically closed field and $G$, $G'$ two finite groups. Then there is a bijection \cite[Theorem 10.33]{CuRe}
$$\Irr(G, \Lambda) \times \Irr(G', \Lambda) \xrightarrow{\cong}  \Irr(G\times G', \Lambda), \qquad (V, V') \mapsto V \otimes_{\Lambda} V' $$ 
inducing canonical isomorphism of free abelian groups $\G_0(G\times G', \Lambda) \cong \G_0(G, \Lambda) \otimes_{\Z} \G_0(G', \Lambda)$.

We equip each $\G_0$ with the scalar product making the classes of irreducible representations into an orthonormal basis. For any $V' \in \Irr(G', \Lambda)$ we then have the orthonormal projection
$$(-)[V']: \G_0(G\times G', \Lambda) \to \G_0(G, \Lambda), \qquad [M] \mapsto [M][V']$$
We call it the {\it naive $V'$-isotypic part}. 
\end{notation}

Considering this for the groups $G_h \times T_h$ of our interest, we now address the compatibility of naive isotypic parts with the reduction map $r_\ell$.
\begin{lemma}\label{lemma: naive isotypic parts and reduction}
Let $M \in \G_0(G_h \times T_h, \overline{\Q}_\ell)$. Take $\psi \in \Irr(T_h, \overline{\F}_\ell)$ and let $\theta_i \in \Irr(T_h, \overline{\Q}_\ell)$ for $i \in I = \{1, \dots, \ell^m \}$ be its finitely many lifts. Then
$$\sum_{i \in I}r_\ell\left(M[\theta_i]\right) = r_\ell(M)[\psi].$$
\end{lemma}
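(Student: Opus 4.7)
The plan is to reduce everything to the case where $M$ is a tensor class and then compute both sides directly, using that all the groups involved are finite and that reduction $r_{\ell}$ is known to act characterwise on abelian targets.

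First, since $G_h \times T_h$ is a finite group and $\overline{\Q}_\ell$ is algebraically closed, the Künneth-type decomposition of $\Irr(G_h \times T_h, \overline{\Q}_\ell)$ recalled in the Notation before the lemma gives
\begin{equation*}
\G_0(G_h \times T_h, \overline{\Q}_\ell) \cong \G_0(G_h, \overline{\Q}_\ell) \otimes_{\Z} \G_0(T_h, \overline{\Q}_\ell),
\end{equation*}
and similarly with $\overline{\F}_\ell$-coefficients. Both sides of the claimed equality are $\Z$-linear in $M$, so by writing $M = \sum_{V, \theta} c_{V, \theta}\, [V \otimes \theta]$ it suffices to verify the identity when $M = [V \otimes \theta]$ for some $V \in \Irr(G_h, \overline{\Q}_\ell)$ and $\theta \in \Irr(T_h, \overline{\Q}_\ell)$.

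In this case the left-hand side is straightforward: by orthonormality of irreducible characters of $T_h$, the naive isotypic projection $M[\theta_i] = [V]$ if $\theta = \theta_i$, and $0$ otherwise. Hence $\sum_{i} r_{\ell}(M[\theta_i])$ equals $r_{\ell}[V]$ if $\theta \in \{\theta_1, \dots, \theta_{\ell^m}\}$ and vanishes otherwise. For the right-hand side I would use that $r_{\ell}$ is compatible with the outer tensor product: tensoring a $G_h$-stable lattice in $V$ with a $T_h$-stable lattice in $\theta$ provides an integral structure of $V \otimes \theta$ whose mod-$\ell$ reduction is $r_{\ell}[V] \otimes r_{\ell}[\theta]$. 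Thus $r_{\ell}(M)[\psi] = r_{\ell}[V] \cdot \langle r_{\ell}[\theta], \psi\rangle$, and since $r_{\ell}(\theta)$ is a single irreducible character of $T_h$, this equals $r_{\ell}[V]$ when $r_{\ell}(\theta) = \psi$ and $0$ otherwise.

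The two descriptions match precisely because, by Lemma \ref{Tlifts}, the set $\{\theta_1, \dots, \theta_{\ell^m}\}$ is exactly the fiber $r_{\ell}^{-1}(\psi) \subseteq \Irr(T_h, \overline{\Q}_\ell)$. The only non-formal point is the compatibility of $r_{\ell}$ with the tensor decomposition of $\G_0$; this is the ``main obstacle'', but it is harmless for finite groups since every $\overline{\Q}_\ell$-representation is finite-dimensional, hence automatically integral (Discussion \ref{reductionmodulolitems} (i)), and one can choose an integral structure on $V \otimes \theta$ of the form $M_V \otimes_{\overline{\Z}_\ell} M_\theta$ and reduce it.
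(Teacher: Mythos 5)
Your proof is correct and follows the same strategy as the paper's: reduce to tensor generators $[V \otimes \theta]$ by additivity of both sides, then verify the identity on generators directly. The paper dispenses with the generator check by declaring it obvious, whereas you spell out the two computations and the compatibility of $r_\ell$ with outer tensor products; the extra detail is sound and all the steps check out.
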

\begin{proof}
The map $r_\ell$ is additive, as well as the formation of an isotypic part. Hence both sides of the equality are additive in $M$. So we only need to check the equality on the generators $[V \otimes_{\Lambda} V']$ from above, where it obviously holds.
\end{proof}

\begin{remark}
Altogether, one can define the naive isotypic parts of the cohomology of $\dot{X}$ as in Construction \ref{construction: euler characteristic of the whole space}: for $\theta \in \Irr(T, \Lambda)$ and $h \geq \lvl(\theta)$, this naive isotypic part is given by
$$[H^{\bullet}_c(\dot{X}, \Lambda)][\theta] :=  \cInd_{ZG_{\O}}^G [H^\bullet_c(\dot{X}_h, \Lambda)][\theta] \in \G_0(G, \Lambda).$$

The well-definedness of this variant can be bootstrapped from what we've already seen. For instance, consider the independence on $h$. We know this holds for $\Lambda = \Ql$, because the naive and derived isotypic parts coincide. The case of $\Lambda = \Fl$ then follows by reduction modulo $\ell$ through Corollary \ref{remark: euler characteristic and reduction}.
Similar reasoning together with Lemma \ref{lemma: naive isotypic parts and reduction} shows that $[H^{\bullet}_c(\dot{X}, \overline{\F}_\ell)][\theta]$ lands in the correct Grothendieck group $\G_0(G, \Fl)$. 
\end{remark}

Replacing derived isotypic parts by the naive ones, our main theorem transforms as follows.
\begin{theorem}\label{theorem: naive upshot}
In the setting of Theorem \ref{theorem: upshot}, the irreducible supercuspidal representation $M(\psi)$ is given by
$$[M(\psi)] = (-1)^{\cd(\psi)} \cdot \frac{1}{\ell^m} \cdot [H^{\bullet}_c(\dot{X}, \overline{\F}_\ell)][\psi].$$
\end{theorem}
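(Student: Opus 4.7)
The strategy is to relate the naive isotypic part $[H^{\bullet}_c(\dot{X}, \overline{\F}_\ell)][\psi]$ to the derived one $[H^\bullet_c(\dot{X}, \overline{\F}_\ell)_{\eL \psi}]$ already handled in Theorem \ref{theorem: upshot}, via summation over the $\ell^m$ characteristic-zero lifts of $\psi$. The plan is to establish the key identity at finite level $h = \lvl(\psi)$ using Lemma \ref{lemma: naive isotypic parts and reduction}, then transport it to the whole space $\dot{X}$ by compact induction.

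I would first enumerate the integral lifts $\theta_1, \dots, \theta_{\ell^m} \in \mathscr{X}(L, \overline{\Q}_\ell)^{\sg, \integral}$ of $\psi$ as in Remark \ref{remark: integral lifts of characters}: by Lemma \ref{lemma: strongly general position and reduction} each lies in strongly general position, and by Proposition \ref{Tdegree} all satisfy $\cd(\theta_i) = \cd(\psi)$. Applying Lemma \ref{lemma: naive isotypic parts and reduction} to $M = [H^{\bullet}_c(\dot{X}_h, \overline{\Q}_\ell)] \in \G_0(G_h \times T_h, \overline{\Q}_\ell)$ together with Corollary \ref{remark: euler characteristic and reduction} yields
$$\sum_{i=1}^{\ell^m} r_\ell\bigl([H^{\bullet}_c(\dot{X}_h, \overline{\Q}_\ell)][\theta_i]\bigr) \;=\; r_\ell\bigl([H^{\bullet}_c(\dot{X}_h, \overline{\Q}_\ell)]\bigr)[\psi] \;=\; [H^{\bullet}_c(\dot{X}_h, \overline{\F}_\ell)][\psi].$$
Since $\rep(T_h, \overline{\Q}_\ell)$ is semisimple, the naive and derived isotypic parts agree in characteristic zero, so the left-hand summand is $r_\ell[H^{\bullet}_c(\dot{X}_h, \overline{\Q}_\ell)_{\theta_i}]$.

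Next I would run this identity through the procedure of Construction \ref{construction: euler characteristic of the whole space}: inflate along $G_{\O} \twoheadrightarrow G_h$, extend across $Z$ via the ambient character, and compact-induce along the clopen inclusion $ZG_{\O} \leq G$. Each step is additive, and $\cInd_{ZG_{\O}}^G$ commutes with $r_\ell$ by Lemma \ref{lemma: cind and rl}, whose hypotheses are verified exactly as in the proof of Lemma \ref{lemma: comparison of isotypic parts}; the same verification applies on the naive side, giving well-definedness of $[H^{\bullet}_c(\dot{X}, \overline{\F}_\ell)][\psi]$ in $\G_0(G, \overline{\F}_\ell)$ as indicated in the remark preceding the theorem. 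Invoking Lemma \ref{lemma: comparison of isotypic parts} to identify each $r_\ell[H^{\bullet}_c(\dot{X}, \overline{\Q}_\ell)_{\theta_i}]$ with the single class $[H^{\bullet}_c(\dot{X}, \overline{\F}_\ell)_{\eL \psi}]$, the displayed equality lifts to
$$\ell^m \cdot [H^{\bullet}_c(\dot{X}, \overline{\F}_\ell)_{\eL \psi}] \;=\; [H^{\bullet}_c(\dot{X}, \overline{\F}_\ell)][\psi] \;\in\; \G_0(G, \overline{\F}_\ell).$$

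To finish, I would divide by $\ell^m$, multiply by $(-1)^{\cd(\psi)}$, and apply Theorem \ref{theorem: upshot}, which identifies the left-hand side with $[M(\psi)]$. There is no real obstacle here beyond careful bookkeeping: the main conceptual point is that the multiplicity $\ell^m$ is precisely the size of the fibre of $r_\ell$ over $\psi|_{T_{\O}}$ as computed in Lemma \ref{lemma: counting lifting characters}, and each of these lifts contributes the same derived isotypic part on the characteristic-$\ell$ side, so the naive isotypic part overcounts by exactly this factor.
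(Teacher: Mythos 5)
Your proposal is correct and follows essentially the same approach as the paper's proof: enumerate the $\ell^m$ integral lifts $\theta_i$ of $\psi$, observe each reduces to the same $M(\psi)$, and then use Lemma \ref{lemma: naive isotypic parts and reduction} to sum them up and recover the naive isotypic part. The only difference is cosmetic — you establish the key identity at finite level and transport it through Construction \ref{construction: euler characteristic of the whole space} explicitly, whereas the paper phrases the summation directly for $\dot{X}$; both rely on the same underlying ingredients (Lemma \ref{lemma: naive isotypic parts and reduction}, Corollary \ref{remark: euler characteristic and reduction}, Lemma \ref{lemma: comparison of isotypic parts}, and Theorem \ref{theorem: upshot}).
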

In other words, $[H^{\bullet}_c(\dot{X}, \overline{\F}_\ell)][\psi] \in \G_0(G, \Fl)$ is an $\ell^m$ multiple of the class of $M(\psi)$. When $\ell$ doesn't divide $|T_1| = q^n - 1$, the multiplicity is $1$; this is the case when $\rep(T_h, \Fl)$ is semisimple. When $\ell$ divides $|T_1|$, this multiplicity becomes nontrivial. 
This discrepancy shows that the complexes $\RG(\dot{X}_h, \Fl)$ and $\GG(\dot{X}_h, \Fl)$ contain interesting information for small $\ell$.

\begin{proof}
Denote by $\theta_i$ for $i = 1, \dots, \ell^m$ lifts of $\psi$ under $r_\ell: ~ \Irr(T, \overline{\Q}_\ell)^{\integral} \to \Irr(T, \overline{\F}_\ell)$, whose restrictions to $T_{\O}$ are pairwise distinct as in Remark \ref{remark: integral lifts of characters}.

The proof of Theorem \ref{theorem: upshot} applies for each of these $\theta_i$ separately, giving irreducible supercuspidal representations $M(\theta_i) := (-1)^{\cd (\psi)} \cdot [H^\bullet_c(\dot{X}, \overline{\Q}_\ell)][{\theta_i}]$ of $G$ -- indeed, the naive and derived isotypic parts coincide here. By the arguments in that proof, each of these representations reduces to $ M(\psi)$ modulo $\ell$, which is independent of $i = 1, \dots, \ell^m$.
By Lemma \ref{lemma: naive isotypic parts and reduction} and additivity of $r_\ell$ we infer
\begin{equation*}
(-1)^{\cd(\psi)} \cdot [H^\bullet_c(\dot{X}, \overline{\F}_\ell)][\psi] = \sum_{i=1}^{\ell^m} r_\ell ((-1)^{\cd(\psi)}[H^\bullet_c(\dot{X}, \overline{\Q}_\ell)][\theta_i]) = \sum_{i=1}^{\ell^m} r_\ell [M(\theta_i)] = \ell^m \cdot [M(\psi)]. 
\end{equation*}
\end{proof}

\begin{remark}
This naive version can be deduced without Rickard's complexes. 
Indeed, only Corollary \ref{remark: euler characteristic and reduction} is needed and this can be proved directly.
\end{remark}
\printbibliography

\bigskip

\noindent Jakub Löwit, \newline
Institute of Science and Technology Austria (ISTA), \newline
Am Campus 1, \newline 
3400 Klosterneuburg, \newline
Austria \newline
\texttt{jakub.loewit@ist.ac.at}

\end{document}